\renewcommand{\epsilon}{\varepsilon}
\DeclareMathOperator{\dist}{dist}
\DeclareMathOperator{\Gaph}{graph}
\DeclareMathOperator{\spa}{span}
\DeclareMathOperator{\loc}{loc}
\def\R{\mathbb{R}}
\def\N{\mathbb{N}}
\def\a{\alpha}
\def\s{\sigma}
\def\wt{\widetilde}
\newtheorem{theorem}{Theorem}[section]
\newtheorem{lemma}[theorem]{Lemma}
\newtheorem{proposition}[theorem]{Proposition}
\newtheorem{remark}[theorem]{Remark}
\newtheorem{corollary}[theorem]{Corollary}
\newtheorem{claim}{Claim}[subsection]
\newtheoremstyle{TheoremNum}
        {\topsep}{\topsep}              
        {\itshape}                      
        {}                              
        {\bfseries}                     
        {.}                             
        { }                             
        {\thmname{#1}\thmnote{ \bfseries #3}}
    \theoremstyle{TheoremNum}
\def\ba #1\ea {\begin{align} #1\end{align}}
\def\bann #1\eann {\begin{align*} #1\end{align*}}
\def\ben #1\een {\begin{enumerate} #1\end{enumerate}}
\def\bi #1\ei {\begin{itemize}\renewcommand\labelitemi{--} #1\end{itemize}}
\title[Collapsing and noncollapsing in convex ancient MCF]{Collapsing and noncollapsing in convex ancient mean curvature flow}
\author{Theodora Bourni}
\author{Mat Langford}
\author{Stephen Lynch}
\address{Department of Mathematics, University of Tennessee Knoxville, Knoxville TN, 37996-1320}
\email{tbourni@utk.edu}
\email{mlangford@utk.edu}
\address{School of Mathematical and Physical Sciences, The University of Newcastle, Newcastle, NSW, Australia, 2308}
\email{mathew.langford@newcastle.edu.au}
\address{Eberhard Karls Universit\"at T\"ubingen, Fachbereich Mathematik, Auf der Morgenstelle 10, 72076 T\"{u}bingen, Germany}
\email{stephen.lynch@math.uni-tuebingen.de}
\date{\today}
\begin{document}

\begin{abstract}
We provide several characterisations of collapsing and noncollapsing in convex ancient mean curvature flow, establishing in particular that collapsing occurs if and only if the flow is asymptotic to at least one Grim hyperplane. As a consequence, we rule out collapsing singularity models in $(n-1)$-convex mean curvature flow (even when the initial datum is only immersed). 
Explicit counterexamples show that $(n-1)$-convexity is optimal. We are also able to rule out collapsing singularity models for suitably pinched solutions of higher codimension. 





\end{abstract}

\maketitle

\setcounter{tocdepth}{1}
\tableofcontents

\section{Introduction}

Ancient solutions to geometric flows (such as the mean curvature, Ricci or Yamabe flows) arise naturally as limits of sequences of rescalings in regions of large curvature, and therefore play a fundamental role in the analysis of singularity formation \cite{HamiltonSingularities}. Further interest in ancient solutions has resulted from their alluring geometric and rigidity properties. Both perspectives have motivated a series of recent classification results, a small cross-section of which can be found in \cite{ADS2,BamlerKleiner,BLTatomic,BLT3,BLT1,BrendleChoi2,BrendleDaskalopoulosNaffSesum,BrIvSc,BrLo,ChMa,MR3485129,MR3794888,DHScsf,DHSRicci,HuSi15,RisaSinestrari1,RisaSinestrari2,Wa11}. See \cite{MR3729051} for an early survey.

We are concerned here with ancient solutions to the \emph{mean curvature flow}. In this context, \emph{convex}\footnote{To be absolutely clear, a hypersurface is said to be (\emph{strictly}) \emph{convex} if it is the boundary of a (\emph{strictly}) \emph{convex body}, by which we mean a (\emph{strictly}) \emph{convex open set}. Given a convex hypersurface $M$, the shorthand $M=\partial \Omega$ will always mean that $\Omega$ is the convex body bounded by $M$. A \emph{convex mean curvature flow} is then a mean curvature flow whose timeslices are convex.} ancient solutions are of particular relevance due to the convexity estimate of Huisken and Sinestrari \cite{HuSi99b,HuSi99a}, which ensures that blow-up limits of compact, mean convex (not necessarily embedded) mean curvature flows are locally convex (though noncompact in general). In the embedded case this result was obtained independently by White \cite{Wh03}. 

The study of singularity formation in mean curvature flow has been advanced significantly following the establishment of \emph{noncollapsing} estimates (see \cite{An12,ShWa09,Wh00,Wh03}). These assert that the product of the inscribed radius with the mean curvature is bounded from below by a positive constant on the evolution of any compact \emph{embedded} hypersurface with positive mean curvature. On the other hand, noncollapsing can and does fail outside the setting of compact, embedded, codimension one, mean convex mean curvature flow. Indeed, 1. for solutions which are not embedded, not mean convex or not of codimension one, the noncollapsing property is ill-defined, 2. collapsing convex solutions such as the Grim Reaper can arise as blow-up limits in mean curvature flow of compact immersed \cite{Ang91} or compact embedded high codimension initial data \cite{MR3939057}, 3. there seems to be little hope of proving any kind of noncollapsing property for high codimension mean curvature flow since embeddedness is not even preserved, and 4. even for the most well-behaved fully nonlinear generalizations of mean curvature flow, noncollapsing is only known to hold if the speed is concave \cite{ALM13} (see also \cite{AL16}).
In each of these settings, the occurrence of collapsing singularity models is a major obstacle which must be overcome if, for example, the flow is to be extended through singularities via a surgery procedure. 

We shall provide here a new mechanism for ruling out collapsing singularity models (via curvature pinching), which applies even when solutions have self-intersections or high codimension. This sheds new light on singularity formation in immersed codimension one mean curvature flow, as well as high codimension mean curvature flow (see \cite{NaffPlanarity} and \cite{LynchNguyenConvexity}). Moreover, our tools are elementary, and thus appear to be sufficiently robust to apply to a much larger class of (fully nonlinear) hypersurface flows.

\begin{theorem}\label{thm:n-1 conv ancient}
If $\{\partial\Omega_t\}_{t\in(-\infty,0]}$ is a convex ancient mean curvature flow in $\R^{n+1}$ which is uniformly $(n-1)$-convex, then the inscribed radius of $\Omega_t$ is at least $H^{-1}$ at each point of $\partial\Omega_t$. 
\end{theorem}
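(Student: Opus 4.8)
The plan is to argue by contradiction, using as the main engine Andrews' two-point maximum principle for noncollapsing \cite{An12} together with the rigidity in its equality case, and feeding it the extra structure of an \emph{ancient}, uniformly $(n-1)$-convex solution. Write $Z$ for the \emph{inscribed curvature} (the reciprocal of the inscribed radius), so that the assertion is the pointwise bound $Z \le H$ on $\partial\Omega_t$ for every $t \le 0$. A convex hypersurface is embedded, and a flow with a flat timeslice is a static hyperplane (for which $H \equiv 0$ and there is nothing to prove), so we may assume $H > 0$. Suppose, for contradiction, that $\Lambda := \sup_{t \le 0} \sup_{\partial\Omega_t} Z/H > 1$.

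First I would propagate this defect backwards in time. By Andrews' estimate, on a smooth mean convex flow $Z$ is a viscosity subsolution of $\ho Z \le -|A|^2 Z + (\text{a nonnegative gradient term})$, and in particular for each $\lambda \ge 1$ the inequality $Z \le \lambda H$ is preserved forwards in time. Consequently $t \mapsto \sup_{\partial\Omega_t} Z/H$ can never drop below $1$ and subsequently rise back above it, and it is non-increasing wherever it exceeds $1$; so $\Lambda > 1$ forces $\sup_{\partial\Omega_t} Z/H \ge \Lambda$ for all $t \le 0$, and the limit $L := \lim_{t \to -\infty} \sup_{\partial\Omega_t} Z/H \in [\Lambda, +\infty]$ exists. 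Next I would blow up at the defect: choosing $t_i \to -\infty$ and $p_i \in \partial\Omega_{t_i}$ with $Z(p_i,t_i)/H(p_i,t_i) \to L$, the rescaled flows $\Sigma^i_s := H(p_i,t_i)\big(\partial\Omega_{t_i + H(p_i,t_i)^{-2}s} - p_i\big)$ are convex ancient flows, uniformly $(n-1)$-convex with the same constant, normalised so that $H = 1$ at the marked spacetime point $0$, and --- when $L < \infty$ --- globally $1/L$-noncollapsed; the interior curvature estimates available for noncollapsed flows then produce a subsequential limit $\Sigma^\infty$, again a convex ancient flow, uniformly $(n-1)$-convex with the same constant, which attains $\sup Z/H = L > 1$ at the interior spacetime point $(0,0)$.

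Then I would invoke rigidity. Since $\Sigma^\infty$ satisfies $Z \le LH$ with equality at $(0,0)$ and $L \ge 1$, the strong maximum principle applied to the supersolution $LH - Z \ge 0$ --- the equality case of Andrews' computation being what allows the gradient terms to be absorbed --- forces $Z \equiv LH$, and the corresponding equality analysis identifies $\Sigma^\infty$ as a self-similarly shrinking generalised cylinder $\mathbf S^k \times \R^{n-k}$ or as a Grim hyperplane $\R^{n-1}\times\Gamma$. A shrinking cylinder has $Z/H = 1/k \le 1$, contradicting $L > 1$; a Grim hyperplane has $\kappa_1 = \dots = \kappa_{n-1} = 0$ and $H = \kappa_n > 0$, hence fails to be uniformly $(n-1)$-convex, contradicting the property inherited by $\Sigma^\infty$. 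In either case we reach a contradiction, whence $\Lambda \le 1$, which is the assertion. (The same conclusion follows immediately from the paper's characterisation of collapsing flows as those asymptotic to a Grim hyperplane: uniform $(n-1)$-convexity would pass to the limiting Grim hyperplane, which does not possess it.)

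The step I expect to be the main obstacle is making the two limiting arguments legitimate when the timeslices are noncompact --- above all, extracting and classifying the blow-down $\Sigma^\infty$ in the range $L = +\infty$, where the a priori curvature control supplied by noncollapsing disappears, so that convexity and uniform $(n-1)$-convexity must be exploited directly to exclude a degenerate, Grim-type limit. This is precisely the regime that the paper's analysis of collapsing via asymptotic Grim hyperplanes is designed to control.
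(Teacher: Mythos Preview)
Your proposal has a genuine gap, and you have correctly identified it yourself: the case $L = +\infty$ is precisely the collapsing case, and it is the entire content of the theorem. When $L = \Lambda < \infty$ the solution is by definition noncollapsed, and then the implication \eqref{cond:noncollapsing}$\Rightarrow$\eqref{cond:optimal noncollapsing} of Theorem~\ref{thm:noncollapsing equivalence} (Brendle's rigidity argument \cite{MR3384488} via the strong maximum principle) already yields $Z \le H$ without any blow-up. So your blow-up construction for $L < \infty$ is superfluous; moreover, the rigidity classification you invoke there is incorrect: the Grim hyperplane does \emph{not} satisfy $Z \equiv LH$ for any finite constant $L$ (on the Grim Reaper $Z/H \to \infty$ toward the asymptotic lines), so it cannot arise as an equality case of Andrews' inequality in the way you describe. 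There is also a secondary issue you pass over: Andrews' monotonicity of $\sup Z/H$ is proved for compact timeslices, and its extension to the present noncompact setting is not automatic.

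The paper takes a different route and does not invoke the two-point function for this step. Instead, Theorems~\ref{thm:noncollapsing equivalence} and~\ref{thm:Reapers} are combined: if the flow were collapsing, Theorem~\ref{thm:Reapers} produces a sequence of rescaled spacetime translations converging smoothly to a Grim hyperplane; but the Grim hyperplane has $\lambda_1 + \dots + \lambda_{n-1} = 0$ with $H > 0$, hence is not uniformly $(n-1)$-convex, contradicting the hypothesis (which passes to the smooth limit). Therefore the flow is noncollapsed, and Theorem~\ref{thm:noncollapsing equivalence} upgrades this to the optimal bound $Z \le H$. Your closing parenthetical remark is, in fact, the paper's proof.
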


Uniform $(n-1)$-convexity means there is some $\beta>0$ such that 
\[\lambda_1 + \dots + \lambda_{n-1} \geq \beta H\]
at each point of $\partial \Omega_t$, where $\lambda_1 \leq \dots \leq \lambda_n$ are the principal curvatures and $H$ is the mean curvature. This hypothesis cannot be weakened in Theorem \ref{thm:n-1 conv ancient}, since the product of $\mathbb{R}^{n-1}$ with a Grim Reaper (i.e. a  ``Grim hyperplane'') is collapsing. 


Theorem \ref{thm:n-1 conv ancient} is an immediate consequence of two results of independent interest, which we now describe. Let us call a convex ancient solution \emph{entire} 
if the region swept out by the enclosed bodies is $\mathbb{R}^{n+1}$. Of course, there are examples of both entire (the shrinking sphere, say) and non-entire (the pancake, say) ancient solutions. We shall establish that the class of entire convex ancient flows coincides exactly with those which are (optimally) interior noncollapsing. 



\begin{theorem}[Characterization of entire flows]\label{thm:noncollapsing equivalence}
Let $\{\partial\Omega_t\}_{t\in(-\infty,0]}$ be a convex ancient mean curvature flow in $\R^{n+1}$. The following are equivalent.
\begin{enumerate}
\item\label{cond:entire} $\cup_{t\le 0}\,\Omega_t=\R^{n+1}$.
\item\label{cond:noncollapsing} There exists $\alpha>0$ such that the inscribed radius of $\Omega_t$ is at least $\alpha H^{-1}$ at each point of $\partial\Omega_t$. 
\item\label{cond:optimal noncollapsing} The inscribed radius of $\Omega_t$ is at least $H^{-1}$ at each point of $\partial\Omega_t$. 
\end{enumerate}
\end{theorem}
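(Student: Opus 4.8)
The plan is to close the cycle $(\ref{cond:optimal noncollapsing})\Rightarrow(\ref{cond:noncollapsing})\Rightarrow(\ref{cond:entire})\Rightarrow(\ref{cond:optimal noncollapsing})$; the first implication is immediate (take $\alpha=1$), so the work is in the other two. Throughout I write $r(p,t)$ for the inscribed radius of $\Omega_t$ at $p\in\partial\Omega_t$, and I tacitly exclude the static hyperplane, so that $H>0$ everywhere. Recall that the timeslices are nested, $\Omega_t\subseteq\Omega_s$ for $s\le t$, so $\cup_{t\le0}\Omega_t$ is an increasing union of convex bodies.

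For $(\ref{cond:noncollapsing})\Rightarrow(\ref{cond:entire})$ I would argue by contradiction using the avoidance principle. If the flow is not entire then $\cup_{t\le0}\Omega_t$ is a proper open convex subset of $\R^{n+1}$, hence lies in a halfspace; after a rigid motion, $\Omega_t\subseteq\{x_{n+1}<0\}$ with $\sup_{t\le0}\max_{\Omega_t}x_{n+1}=0$. Set $h(t):=\max_{\Omega_t}x_{n+1}$, attained at a point $p_t$ with outward normal $e_{n+1}$; Hamilton's trick gives $h'(t)=-H(p_t)$, so $h$ increases to $0$ as $t\to-\infty$ and $\int_{-\infty}^0 H(p_t)\,dt=-h(0)<\infty$, whence $\liminf_{t\to-\infty}|t|H(p_t)=0$. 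Pick $t_j\to-\infty$ with $|t_j|H(p_{t_j})\to0$. The $\alpha$-noncollapsing hypothesis gives a ball of radius $\rho_j:=\alpha H(p_{t_j})^{-1}$ inside $\Omega_{t_j}$ touching $\partial\Omega_{t_j}$ at $p_{t_j}$; since $\rho_j^2/(2n|t_j|)\to\infty$, comparison with the shrinking sphere up to time $0$ shows that $\Omega_0$ contains a ball whose topmost point has height $h(t_j)+\big(\sqrt{\rho_j^2-2n|t_j|}-\rho_j\big)\to0$ (the parenthesis is $O(|t_j|/\rho_j)=O(|t_j|H(p_{t_j}))\to 0$). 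Hence $\max_{\Omega_0}x_{n+1}\ge0$, contradicting $h(0)<0$. This is elementary once one checks $h'=-H(p_t)$ and the integrability of $H(p_t)$.

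For $(\ref{cond:entire})\Rightarrow(\ref{cond:optimal noncollapsing})$ the tool is Andrews' noncollapsing maximum principle, adapted to ancient flows. Put $\phi(t):=\inf_{p\in\partial\Omega_t}r(p,t)H(p,t)\in[0,\infty)$; this is scale-invariant, and applying Andrews' computation for $Z_\beta(p,q,t)=\tfrac{1}{2\beta}H(p)|p-q|^2-\langle\nu(p),p-q\rangle$ at an interior spatial minimum (in the convex, possibly noncompact, setting) shows that the condition $\phi\ge\beta$ is preserved for every $\beta$, i.e.\ $\phi$ is nondecreasing. It therefore suffices to show $c:=\lim_{t\to-\infty}\phi(t)\ge1$. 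Suppose $c<1$, choose $t_j\to-\infty$ and $p_j\in\partial\Omega_{t_j}$ with $r(p_j,t_j)H(p_j,t_j)\to c$, and rescale by $H(p_j,t_j)$ about $(p_j,t_j)$; by standard compactness for convex ancient flows one obtains a limit convex ancient flow $\{\Omega^\infty_\tau\}$ with $0\in\partial\Omega^\infty_0$, $H^\infty(0,0)=1$, $r^\infty(0,0)=c$, and — since $t_j+H(p_j,t_j)^{-2}\tau\to-\infty$ while $\phi$ is scale-invariant and monotone — $\inf_{\partial\Omega^\infty_\tau}rH\equiv c$ for all $\tau\le0$. If $c=0$ this is absurd, since $0$ is a smooth boundary point of the convex body $\Omega^\infty_0$ and hence has positive inscribed radius. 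If $0<c<1$ then $\{\Omega^\infty_\tau\}$ attains equality in Andrews' estimate at every time, which forces it to be self-similar; the only convex self-similar ancient flows attaining their noncollapsing ratio are the shrinking round cylinders $S^k\times\R^{n-k}$ (the sphere when $k=n$), for which $rH\equiv k\ge1$ — contradicting $c<1$.

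The hard part is this last implication, and it has two pieces: (i) showing that Andrews' monotonicity survives the passage to (convex) noncompact flows and the absence of an initial time, with scale-invariance and monotonicity of $\phi$ playing the role of ``preservation of the initial pinching''; and (ii) the rigidity in the equality case — ruling out the Grim hyperplanes (which are precisely the convex self-similar ancient flows that fail to attain their noncollapsing ratio) and identifying the blow-down limit with $0<c<1$ as a round cylinder. The remaining ingredients — the compactness for convex ancient flows, positivity of the inscribed radius at a smooth convex point, and the avoidance argument above — are standard.
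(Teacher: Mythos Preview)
Your cycle $(\ref{cond:optimal noncollapsing})\Rightarrow(\ref{cond:noncollapsing})\Rightarrow(\ref{cond:entire})\Rightarrow(\ref{cond:optimal noncollapsing})$ is reasonable, and the argument for $(\ref{cond:noncollapsing})\Rightarrow(\ref{cond:entire})$ is essentially sound (modulo a technical point below). The real problems lie in $(\ref{cond:entire})\Rightarrow(\ref{cond:optimal noncollapsing})$.

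\textbf{The compactness you invoke is the whole theorem.} You write that ``standard compactness for convex ancient flows'' lets you pass to a limit after rescaling. But no such compactness is standard: for a sequence of convex ancient flows normalised so that $H=1$ at the spacetime origin, one needs to rule out that the enclosed regions degenerate to a set of dimension $\le n$. This is exactly the content of the paper's Theorem~\ref{thm:entire_compact}, and it rests on the slab estimate (Proposition~\ref{prop:slab}) and the paraboloid estimate (Proposition~\ref{prop:paraboloid}), both of which are substantial. Once that compactness is in hand, the paper's route is much shorter than yours: if $(\ref{cond:entire})$ holds but $(\ref{cond:noncollapsing})$ fails, translate and rescale so that $H=1$ and $r\to0$ at the origin; the compactness of entire flows gives a smooth limit, whence $\liminf r>0$ --- contradiction. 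No Andrews monotonicity, no rigidity.

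\textbf{The Andrews step is not justified in this setting.} Andrews' two-point maximum principle is proved for \emph{compact} mean convex flows. On a noncompact convex timeslice the infimum $\phi(t)=\inf rH$ need not be attained, so ``apply Andrews' computation at an interior spatial minimum'' is not available as stated. A localisation (as in Brendle--Naff) or an exhaustion argument would be required.

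\textbf{The rigidity step is wrong as stated.} You assert that a convex ancient flow with $\phi\equiv c$ \emph{attained} at every time is forced to be self-similar. This is not a known result, and it is not what the strong maximum principle gives: equality in Andrews' inequality at a point forces the inscribed ball to touch at an antipodal point, which propagates, but that geometric information does not by itself yield self-similarity (the bowl soliton, for instance, is a convex translating ancient solution with $\inf rH$ attained at the tip, and it is self-similar only in the translating sense). The correct mechanism for $(\ref{cond:noncollapsing})\Rightarrow(\ref{cond:optimal noncollapsing})$, which the paper defers to \cite{MR3384488}, is different: noncollapsing gives bounded entropy, so Huisken's monotonicity formula identifies the tangent flow at $-\infty$ as a self-shrinker; convex shrinkers are round cylinders $S^k\times\R^{n-k}$ with $rH\equiv k\ge1$; then Andrews' monotonicity (once established) pushes this bound forward.

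\textbf{Minor issue in $(\ref{cond:noncollapsing})\Rightarrow(\ref{cond:entire})$.} You write $h(t)=\max_{\Omega_t}x_{n+1}$ ``attained at a point $p_t$ with outward normal $e_{n+1}$''. For a noncompact convex $\partial\Omega_t$ this supremum need not be attained (think of a Grim hyperplane: $\sup x_{n+1}$ equals the slab half-width and is never reached). Your avoidance-with-spheres idea still works if you run it at the nearest point to a fixed interior point $x_0\in\Omega_0$ instead (that minimum \emph{is} attained, the normal points toward $x_0$, and $d(t)=\dist(x_0,\partial\Omega_t)$ satisfies $d'(t)=H(p_t)$ just as well), but the write-up as it stands has a gap.
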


To establish Theorem \ref{thm:noncollapsing equivalence} we prove that the space of entire convex ancient flows with $H$ normalised at the origin is compact in the smooth topology. From this the inscribed radius estimate is immediate. Brendle and Naff arrived at Theorem \ref{thm:noncollapsing equivalence} independently by proving a local noncollapsing estimate \cite{Brendle-Naff}, and there is yet another approach that uses Huisken's monotonicity formula due to Choi--Haslhofer--Hershkovits \cite{CHH21} (see the proof of Theorem 4.2).

The second ingredient needed to prove Theorem \ref{thm:n-1 conv ancient} is the following characterization of convex ancient mean curvature flows which fail to be entire. In particular, we show that every such flow contains arbitrarily large (on the scale of the curvature) spacetime regions modelled on a Grim hyperplane. 

\begin{theorem}[Characterization of non-entire flows]\label{thm:Reapers}
Let $\{\partial\Omega_t\}_{t\in(-\infty,0]}$ be a convex ancient mean curvature flow in $\R^{n+1}$ with $H>0$. The following are equivalent.
\begin{enumerate}
\item\label{cond:not entire} $\cup_{t\in(-\infty,0]}\,\Omega_t\subsetneq \R^{n+1}$. 
\item\label{cond:slab} $\cup_{t\in(-\infty,0]}\,\Omega_t$ is a slab region. 
\item\label{cond:collapsing} $\{\partial\Omega_t\}_{t\in(-\infty,0]}$ is collapsing (in the sense of inscribed radii). 
\item\label{cond:some blowdown} The family $\{r \partial\Omega_{r^{-2}t}\}_{t\in(-\infty,0]}$ converges locally smoothly to a hyperplane of multiplicity two as $r \to 0$.
\item\label{cond:Grim} $\{\partial\Omega_t\}_{t\in(-\infty,0]}$ admits a sequence of rescaled spacetime translations which converges locally smoothly to a Grim hyperplane.
\end{enumerate}
\end{theorem}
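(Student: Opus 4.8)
The plan is to run a cycle of implications built on Theorem~\ref{thm:noncollapsing equivalence}, Huisken's classification of convex self-shrinkers, and the structure theory for convex ancient mean curvature flows. As a preliminary I would note that, since $H>0$, the timeslices move strictly inward, so $\Omega_t\subset\Omega_s$ for $s<t$ and $\mathcal{U}:=\cup_{t\le 0}\Omega_t$ is an increasing union of convex open sets, hence itself convex and open. The equivalence \eqref{cond:not entire}$\Leftrightarrow$\eqref{cond:collapsing} is then immediate: \eqref{cond:not entire} negates condition~\eqref{cond:entire} of Theorem~\ref{thm:noncollapsing equivalence}, and ``collapsing'' negates condition~\eqref{cond:noncollapsing}, so this is the contrapositive of that theorem. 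I would close the circle by proving \eqref{cond:not entire}$\Rightarrow$\eqref{cond:some blowdown}$\Rightarrow$\eqref{cond:slab}$\Rightarrow$\eqref{cond:Grim}$\Rightarrow$\eqref{cond:not entire}.

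For \eqref{cond:not entire}$\Rightarrow$\eqref{cond:some blowdown}: first, $\mathcal{U}$ must be unbounded, since otherwise, comparing the flow via the avoidance principle with a round sphere circumscribing $\Omega_{-T}$---which becomes extinct at time $-T+\Lambda^2/(2n)$, negative once $T$ is large---would force $\Omega_t$ to vanish before a negative time, contradicting the existence of $\Omega_0$. Next, by Huisken's monotonicity formula the parabolic blow-down $\lim_{r\to 0}r\,\partial\Omega_{r^{-2}t}$ is a self-shrinking weak flow which, being a limit of convex flows, is a shrinking $\R^k\times S^{n-k}$ for some $0\le k\le n$; if $k\le n-1$ then comparing $\Omega_t$ with large rescaled copies of this shrinker shows that $\Omega_t$ exhausts $\R^{n+1}$, contradicting \eqref{cond:not entire}. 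Hence $k=n$: the blow-down is a multiplicity-two hyperplane $P$. The convergence of $r\,\partial\Omega_{r^{-2}t}$ to $P$ is then smooth (the sheets flatten and the ``edges'' of the rescaled bodies escape to infinity) and of multiplicity exactly two (a line perpendicular to $P$ meets each convex timeslice in at most two boundary points, and, normalising $0\in\Omega_0\subset\Omega_{r^{-2}t}$, the body keeps a nonempty interior about the origin with both sheets collapsing onto $P$). This is \eqref{cond:some blowdown}.

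For \eqref{cond:some blowdown}$\Rightarrow$\eqref{cond:slab}: from $r\,\Omega_{r^{-2}t}\to P$ one immediately gets that $\mathcal{U}$ is contained in a slab with walls parallel to $P$. Upgrading this to ``$\mathcal{U}$ equals a slab'' is the delicate point, and I expect it to be the main obstacle: I would invoke the structure theory for convex ancient mean curvature flows (Bourni--Langford--Tinaglia) to rule out $\mathcal{U}$ being a proper sub-slab, a half-slab, or a wedge---a half-slab or wedge would produce a one-sided recession direction, which is incompatible with the blow-down being a hyperplane through the origin. Granting this, $\mathcal{U}=\R^n\times(a,b)$ after a rigid motion.

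For \eqref{cond:slab}$\Rightarrow$\eqref{cond:Grim}: with $\mathcal{U}=\R^n\times(a,b)$, choose for each $t$ a tip $p_t\in\partial\Omega_t$ maximising $\langle\,\cdot\,,e\rangle$ in a fixed direction $e\in\R^n$. Since $\Omega_t$ fills the slab as $t\to-\infty$, the curvature of $\partial\Omega_t$ near $p_t$ is bounded in terms of $b-a$ alone, so by the compactness theory for convex ancient flows the spacetime translates $\{\partial\Omega_{t+s}-p_t\}_s$ subconverge, as $t\to-\infty$, to a convex (eternal) ancient flow $\mathcal{G}$ confined to a slab of width $\le b-a$; by the classification of convex ancient flows in a slab, $\mathcal{G}$ is a Grim hyperplane, and the translates realising it are spacetime translates of $\{\partial\Omega_t\}$. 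Finally, for \eqref{cond:Grim}$\Rightarrow$\eqref{cond:not entire}: if $\{\partial\Omega_t\}$ were entire, then by Theorem~\ref{thm:noncollapsing equivalence} it would satisfy the inscribed-radius bound $\ge H^{-1}$ everywhere, an estimate that is scale and spacetime-translation invariant and passes to locally smooth limits (at a limit point the inscribed ball is the limit of inscribed balls, all lying in a fixed compact region, and $H$ converges); the limiting Grim hyperplane would then satisfy it as well. But a Grim hyperplane is collapsing---along its wings $H\to 0$ whereas the inscribed radius stays at most half the width of the slab it lies in---a contradiction. This closes the cycle.
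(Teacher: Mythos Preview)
The principal gap is in your step \eqref{cond:slab}$\Rightarrow$\eqref{cond:Grim}. You take spacetime translates at ``tip'' points and then invoke a ``classification of convex ancient flows in a slab'' to identify the limit as a Grim hyperplane. No such classification is available in dimension $n\geq 2$; the only usable result is the one-dimensional classification of \cite{BLT3}, and the entire content of Proposition~\ref{prop:grim_plane} is the reduction to that case. Concretely: the $\Omega_t$ are typically \emph{unbounded} in the slab directions, so your tip points $p_t$ maximising $\langle\,\cdot\,,e\rangle$ need not exist; your asserted curvature bound near $p_t$ is unjustified (the paper obtains two-sided control of $H$ on a definite scale via the point-picking Lemma~\ref{lem:pp}); and, most importantly, you give no mechanism forcing the limit to split off an $\R^{n-1}$ factor. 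The paper achieves the splitting by projecting onto the mid-hyperplane of the slab, selecting points $u_{k,i},v_{k,i}\in\Omega_{t_i}$ whose projections run to $\pm\infty$ along each axis $e_k$ for $1\le k\le n-1$, and showing that the convex hulls of $\{p_i,u_{k,i},v_{k,i}\}$ become full lines $\R e_k$ after rescaling. Only then does \cite{BLT3} apply to the remaining one-dimensional factor.

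Your step \eqref{cond:some blowdown}$\Rightarrow$\eqref{cond:slab} also omits the essential input. That the blow-down is a multiplicity-two hyperplane does not by itself confine $\mathcal{U}$ to a slab: for each small $r$ the set $r\,\Omega_{-r^{-2}}$ is thin, but undoing the scaling gives no uniform bound on $\Omega_{-r^{-2}}$. The paper uses the slab estimate (Proposition~\ref{prop:slab}) here---a nontrivial iteration showing that if a single rescaled timeslice is sufficiently thin in a large ball then the whole flow lies in a slab. The upgrade from ``contained in a slab'' to ``sweeps out a slab'' is then Lemma~\ref{lem:slabsweep}, a short Grim Reaper barrier argument, rather than any structure theory. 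Your route \eqref{cond:not entire}$\Rightarrow$\eqref{cond:some blowdown} via Huisken's monotonicity and the self-shrinker classification is a plausible alternative to the paper's use of Wang's dichotomy, but the smoothness of the blow-down and the applicability of the classification in the noncompact convex setting would need to be justified.
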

\begin{remark}
In part \eqref{cond:some blowdown} of Theorem \ref{thm:Reapers}, the convergence is in the intrinsic sense. 
In \eqref{cond:Grim}, the convergence is in the extrinsic sense. 
\end{remark}

\subsection{Singularities} Theorem \ref{thm:n-1 conv ancient} yields the surprising corollary that singularity models in $(n-1)$-convex mean curvature flow are noncollapsed, even if the original solution has self-intersections. To put this into context, note that $(n-1)$-convexity is a strictly weaker condition than positive scalar curvature when $n\ge 3$. 

\begin{corollary}[Noncollapsing in $(n-1)$-convex mean curvature flow]\label{cor:n-1 convex}
Let $\{M_t\}_{t \in (-\infty, 0]}$ be a proper, nonflat ancient mean curvature flow in $\R^{n+1}$ 
which arises as a smooth blow-up limit of a compact mean curvature flow with uniformly $(n-1)$-convex (not necessarily embedded) initial datum. Each timeslice $M_t$ is the boundary of a convex domain $\Omega_t$, and the inscribed radius of $\Omega_t$ is at least $H^{-1}$ at every point in $M_t$. 
\end{corollary}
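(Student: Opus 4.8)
The plan is to check that the limit flow $\{M_t\}$ meets the hypotheses of Theorem \ref{thm:n-1 conv ancient} and then to quote that theorem. Two things have to be verified: that each timeslice $M_t$ is the boundary of a convex body $\Omega_t$, so that $\{\partial\Omega_t\}_{t\in(-\infty,0]}$ is a genuine convex ancient mean curvature flow, and that this flow is uniformly $(n-1)$-convex. Throughout we take $n\ge2$; for $n=1$ the $(n-1)$-convexity hypothesis is degenerate. Note that $(n-1)$-convexity is understood to include $H>0$, so the compact flow from which $\{M_t\}$ is extracted is mean convex.

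I would first deal with the pinching, which is the easy half. The condition $\lambda_1+\dots+\lambda_{n-1}\ge\beta H$ is equivalent to $\lambda_{\max}(A)\le(1-\beta)\trace A$; since $\lambda_{\max}$ is a convex function of the symmetric operator $A$ while $\trace$ is linear, the set $\mathcal{C}_\beta:=\{A:\lambda_{\max}(A)\le(1-\beta)\trace A\}$ is a convex cone, and it is obviously invariant under the reaction ODE $\dot A=|A|^2A$ of mean curvature flow, which merely rescales $A$. By Hamilton's maximum principle for the reaction--diffusion system satisfied by the second fundamental form --- a local statement, insensitive to self-intersections --- the inclusion $A\in\mathcal{C}_\beta$ is preserved along the compact flow. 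As $\mathcal{C}_\beta$ is scaling invariant, every parabolic rescaling in the blow-up sequence again lies in $\mathcal{C}_\beta$, and the inclusion survives passage to the smooth limit; thus $\{M_t\}$ is uniformly $(n-1)$-convex with the same $\beta$. Likewise mean convexity, being preserved along the compact flow and scaling invariant, passes to the limit, so $H\ge0$ on $\{M_t\}$.

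For the convexity of the timeslices I would argue as follows. By the Huisken--Sinestrari convexity estimate, which (as recalled in the introduction) applies to compact mean convex flows even without an embeddedness assumption, the blow-up limit is weakly convex: $\lambda_1\ge0$. Since $\{M_t\}$ is weakly convex and nonflat, the strong maximum principle applied to the equation $\partial_tH=\Delta H+|A|^2H$ upgrades $H\ge0$ to $H>0$ everywhere; indeed, at a point where $H$ vanishes a weakly convex hypersurface is totally geodesic, and by the strong maximum principle this then propagates, forcing a hyperplane, contrary to hypothesis. So each $M_t$ is a complete, smooth, locally convex hypersurface, properly immersed because the flow is proper by hypothesis, and not a hyperplane; by the classical structure theory of complete locally convex hypersurfaces in $\R^{n+1}$ with $n\ge2$ (going back to van Heijenoort) it is embedded and coincides with the boundary $\partial\Omega_t$ of a convex body $\Omega_t$. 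The bodies $\Omega_t$ are nested and nonincreasing in $t$ by the comparison principle, so $\{\partial\Omega_t\}_{t\in(-\infty,0]}$ is a convex ancient mean curvature flow in the sense of this paper, and it is uniformly $(n-1)$-convex by the previous paragraph. Theorem \ref{thm:n-1 conv ancient} then gives that the inscribed radius of $\Omega_t$ is at least $H^{-1}$ at every point of $M_t$, which is precisely the claim.

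I expect the crux to be the passage from the merely immersed hypothesis to the conclusion that each $M_t$ bounds a convex body: this rests on the Huisken--Sinestrari convexity estimate remaining valid in the immersed category, and on the structure theorem that a proper, connected, weakly convex, nonflat hypersurface in $\R^{n+1}$ ($n\ge2$) is embedded and bounds a convex body. Everything else --- preservation and scale-invariance of the $(n-1)$-convex pinching, the strong maximum principle for $H$, and the final appeal to Theorem \ref{thm:n-1 conv ancient} --- is routine.
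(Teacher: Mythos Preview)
Your argument is correct and follows essentially the same route as the paper: Huisken--Sinestrari gives $A\ge0$ on the blow-up, a strong maximum principle argument rules out the degenerate (flat) case, a classical embedding theorem for complete locally convex hypersurfaces shows each $M_t$ bounds a convex body, and Theorem~\ref{thm:n-1 conv ancient} finishes. The only point worth tightening is the citation: van Heijenoort's theorem, as originally stated, requires a point of \emph{strict} convexity (all principal curvatures positive), which $H>0$ alone does not supply. The paper instead applies the strong maximum principle to $|A|^2/H^2$ to obtain $|A|^2<H^2$ everywhere and then cites Sacksteder's theorem. Your route also reaches the Sacksteder hypotheses---once $H>0$, uniform $(n-1)$-convexity with $A\ge0$ forces $\lambda_{n-1},\lambda_n>0$, hence positive sectional curvature---so simply replace ``van Heijenoort'' by ``Sacksteder'' and the argument is complete.
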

 
For \emph{embedded} flows, Corollary \ref{cor:n-1 convex} is, of course, a consequence of the known noncollapsing results \cite{An12,ShWa09,Wh03}, but it goes far beyond the best currently known results for the general case (which are based on the Huisken--Sinestrari gradient estimate \cite[Theorem 6.1]{HuSi09}). In fact, the result is sharp in that there are examples of compact, mean convex initial data that form singularities modelled on a Grim hyperplane, which is collapsing and weakly $(n-1)$-convex --- consider a very long torus over a one-dimensional cusp \cite{Ang91}. Moreover, the Grim hyperplane singularities in these examples cannot be perturbed away.

The following is the analogue of Corollary \ref{cor:n-1 convex} for solutions of higher codimension which satisfy the pinching condition introduced by Andrews--Baker \cite{AnBa10}.

\begin{corollary}[Noncollapsing in pinched high codimension mean curvature flow]\label{cor:pinched high codim}
Let $\{M_t\}_{t \in (-\infty, 0]}$ be a proper, nonflat, $n$-dimensional ancient mean curvature flow in $\R^{n+k}$ 
which arises as a smooth blow-up limit of a compact mean curvature flow in $\R^{n+k}$ with initial datum satisfying $|A|^2 \leq \{\tfrac{4}{3n}, \tfrac{3(n+1)}{2n(n+2)}\}|H|^2$. Up to a fixed rotation, each timeslice $M_t$ is the boundary of a convex domain $\Omega_t$ in $\R^{n+1}$, and the inscribed radius of $\Omega_t$ is at least $H^{-1}$ at every point in $M_t$. 
\end{corollary}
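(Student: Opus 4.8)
The plan is to reduce Corollary \ref{cor:pinched high codim} to the codimension-one case by exploiting the Andrews--Baker pinching condition, which is known to improve under the flow and which forces blow-up limits to be convex hypersurfaces. First I would invoke the work of Andrews--Baker \cite{AnBa10} (and its refinements, e.g. by Nguyen, Naff, Lynch--Nguyen): a compact $n$-dimensional solution in $\R^{n+k}$ satisfying $|A|^2 \le c_n |H|^2$ with $c_n = \min\{\tfrac{4}{3n},\tfrac{3(n+1)}{2n(n+2)}\}$ contracts to a round point, and — more to the point here — any smooth blow-up limit $\{M_t\}$ is a proper, nonflat, weakly convex ancient solution which moreover is \emph{codimension one}: the pinching constant is chosen precisely so that the full second fundamental form of the limit lies in a one-dimensional normal subspace (the "planarity" phenomenon, cf. \cite{NaffPlanarity}, \cite{LynchNguyenConvexity}). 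Thus, after a fixed rotation, each $M_t$ is a convex hypersurface in some $\R^{n+1} \subset \R^{n+k}$, bounding a convex body $\Omega_t$, and $\{\partial\Omega_t\}$ is a convex ancient mean curvature flow in $\R^{n+1}$ in the sense of this paper.

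Next I would verify that this limit flow also inherits \emph{uniform $(n-1)$-convexity}, since that is the hypothesis needed to apply Theorem \ref{thm:n-1 conv ancient}. The Andrews--Baker pinching $|A|^2 \le c_n |H|^2$ passes to the limit, and for a codimension-one convex hypersurface this scalar pinching controls the ratio of the smallest principal curvature to $H$: a short linear-algebra computation shows that $|A|^2 \le c_n H^2$ with $c_n < \tfrac{1}{n-1}$ forces $\lambda_1 > 0$ and in fact $\lambda_1 + \dots + \lambda_{n-1} \ge \beta H$ for a dimensional constant $\beta = \beta(n) > 0$. (The two competing constants $\tfrac{4}{3n}$ and $\tfrac{3(n+1)}{2n(n+2)}$ are both comfortably below $\tfrac{1}{n-1}$ for the relevant ranges of $n$, so uniform $(n-1)$-convexity — indeed a quantitative version — holds automatically.) With uniform $(n-1)$-convexity of the limit flow in hand, Theorem \ref{thm:n-1 conv ancient} applies directly and yields that the inscribed radius of $\Omega_t$ is at least $H^{-1}$ at every point of $M_t$, which is the assertion.

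The main obstacle, and the step requiring the most care, is the \emph{planarity/codimension reduction}: establishing that the blow-up limit genuinely reduces to codimension one. This is where the precise value of the pinching constant enters — weaker pinching allows genuinely higher-codimension limits (e.g. products with round spheres of the "wrong" dimension, or Lagrangian-type examples), for which the inscribed radius is not even defined. I would cite the relevant rigidity/splitting results (Andrews--Baker's convexity estimate together with the high-codimension analogues of the Huisken--Sinestrari convexity and cylindrical estimates, and the planarity estimate under the sharp pinching) rather than reprove them, since they are substantial results in their own right; the novelty of the present corollary is entirely in combining them with Theorem \ref{thm:n-1 conv ancient}. A secondary point to be careful about is that the blow-up limit is assumed \emph{smooth}, \emph{proper} and \emph{nonflat} — properness ensures each timeslice is a genuine convex hypersurface (not a lower-dimensional convex set or a degenerate limit), nonflatness rules out the static hyperplane, and smoothness lets us pass curvature pinching to the limit; all three are hypotheses of the statement, so no further argument is needed there.
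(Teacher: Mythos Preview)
Your approach is correct and matches the paper's: cite preservation of the pinching (Andrews--Baker), codimension reduction via the planarity estimate (Naff), and convexity of the blow-up (Lynch--Nguyen), then observe that in codimension one the scalar pinching forces uniform $(n-1)$-convexity so that Theorem~\ref{thm:n-1 conv ancient} applies. One small correction: your claim that $c_n < \tfrac{1}{n-1}$ fails for $n \ge 5$ (e.g.\ $\tfrac{4}{3n} > \tfrac{1}{n-1}$ once $n > 4$), but this is harmless --- the actual linear-algebra threshold for uniform $(n-1)$-convexity is only $c_n < 1$ (since $\lambda_1+\dots+\lambda_{n-1}=0$ forces $|A|^2 \ge \lambda_n^2 = H^2$), and both pinching constants are well below $1$.
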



Another implication of Theorem \ref{thm:n-1 conv ancient} is the relaxation of noncollapsing hypotheses in various classification results for ancient solutions. We illustrate this by removing the noncollapsing hypothesis in the work of Angenent--Daskalopoulos--\v{S}e\v{s}um \cite{ADS2} and Brendle--Choi \cite{BrendleChoi2}, resulting in a classification of convex uniformly two-convex ancient mean curvature flows of dimension at least three. 

\begin{corollary}\label{cor:classification}
The shrinking sphere, the admissible shrinking cylinder, the admissible ancient ovaloid, and the bowl soliton are the only convex ancient mean curvature flows in $\R^{n+1}$, $n\ge 3$, which are uniformly two-convex.
\end{corollary}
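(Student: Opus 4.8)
The plan is to combine Theorem \ref{thm:n-1 conv ancient} with the existing classification results. Let $\{\partial\Omega_t\}_{t\in(-\infty,0]}$ be a convex ancient mean curvature flow in $\R^{n+1}$ with $n\ge 3$ which is uniformly two-convex, i.e.\ there is $\beta>0$ with $\lambda_1+\lambda_2\ge \beta H$. Since $n\ge 3$, uniform two-convexity implies uniform $(n-1)$-convexity (indeed $\lambda_1+\dots+\lambda_{n-1}\ge \lambda_1+\lambda_2\ge \beta H$ because the discarded curvatures are nonnegative by convexity). Hence Theorem \ref{thm:n-1 conv ancient} applies and gives the optimal inscribed radius bound: the inscribed radius of $\Omega_t$ is at least $H^{-1}$ at each point of $\partial\Omega_t$. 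In particular the flow is (uniformly, in fact optimally) noncollapsed.

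The second step is to feed this into the classification of \emph{noncollapsed} convex uniformly two-convex ancient flows. If the flow is compact, then by Angenent--Daskalopoulos--\v{S}e\v{s}um \cite{ADS2} (the rotationally symmetric case) together with Brendle--Choi \cite{BrendleChoi2} (which removes the symmetry assumption) -- or rather the version of their arguments valid in all dimensions $n\ge 3$ under uniform two-convexity -- the flow is either the shrinking round sphere or the (admissible) ancient ovaloid. If the flow is noncompact, then it splits off lines: a noncompact convex ancient solution which is uniformly two-convex and noncollapsed must, after passing to its asymptotic translator/shrinker structure, reduce to a lower-dimensional rotationally symmetric model, and Brendle--Choi's noncompact classification identifies it as either the shrinking generalized cylinder $S^{n-k}\times\R^{k}$ (the ``admissible shrinking cylinder'', where two-convexity forces $k\le 1$, so in fact $S^{n-1}\times\R$) or the rotationally symmetric bowl soliton.

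A few points need care. First, one must check that the flows produced by Theorem \ref{thm:n-1 conv ancient} genuinely satisfy the hypotheses under which \cite{ADS2,BrendleChoi2} operate: those papers assume noncollapsing as an input, and the content of this corollary is precisely that the noncollapsing hypothesis is now automatic for uniformly two-convex convex ancient flows when $n\ge 3$, so there is nothing further to verify there beyond invoking their theorems. Second, one should record the degenerate/non-generic cases -- e.g.\ a flow that is a static hyperplane is excluded since it is not two-convex (its curvatures vanish identically, so $\lambda_1+\lambda_2\ge\beta H$ holds only trivially, but such a flow is flat and typically excluded by the convention $H>0$ or by nonflatness); more precisely one restricts to flows with $H>0$, which is no loss since a convex ancient flow with $H\equiv 0$ somewhere is a static hyperplane by the strong maximum principle. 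Third, the admissibility qualifiers (``admissible shrinking cylinder'', ``admissible ancient ovaloid'') are inherited verbatim from \cite{ADS2,BrendleChoi2} and simply encode which of these objects actually arise as convex ancient solutions.

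The main obstacle is not in the new argument -- which is the short deduction above -- but in correctly citing and assembling the pieces of \cite{ADS2} and \cite{BrendleChoi2}: one must ensure their classification is stated (or extends) in the generality ``convex, uniformly two-convex, noncollapsed, dimension $n\ge 3$, compact or noncompact'' rather than only for the specific mean-convex compact flows in their original formulations. Modulo that bookkeeping, Theorem \ref{thm:n-1 conv ancient} does all the work by supplying the missing noncollapsing input, and the corollary follows.
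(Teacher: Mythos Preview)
Your argument is correct and matches the paper's approach: deduce noncollapsing from Theorem~\ref{thm:n-1 conv ancient} (using that uniform two-convexity implies uniform $(n-1)$-convexity when $n\ge 3$), then invoke the classification results of \cite{ADS2} and \cite{BrendleChoi2}. The paper's own proof is the two-sentence version of this; your additional paragraphs about admissibility, the $H>0$ reduction, and the compact/noncompact dichotomy are not wrong but are unnecessary bookkeeping, since the cited works already handle those cases under precisely the hypotheses (convex, uniformly two-convex, noncollapsed, $n\ge 3$) that you have secured.
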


\subsection{Remarks on related work}
In \cite{Naff19}, Naff proved Corollary \ref{cor:pinched high codim} for solutions satisfying $|A|^2 \leq \{\tfrac{1}{n-2}, \tfrac{3(n+1)}{2n(n+2)}\}|H|^2$, which is a quadratic analogue of two-convexity. He also used the gradient estimate of Huisken and Sinestrari \cite{HuSi09} to prove Corollary \ref{cor:n-1 convex} for two-convex solutions of dimension at least three. In the  Huisken--Sinestrari gradient estimate, two-convexity can be weakened to $k$-convexity so long as $k<\frac{2n+1}{3}$, with only minor modifications to the proof (see, for example, \cite[Theorem 9.24]{EGF}). Huisken and Sinestrari have also proved an analogous gradient estimate for $k$-convex ancient solutions, $k<\frac{2n+1}{3}$, assuming in addition that $\min_{M_t}H^2$ is not integrable \cite{HuSi15}. It does not appear to be possible to weaken the strong intermediate convexity hypothesis (observe that $\tfrac{2n+1}{3}< n-1$ for all $n \geq 5$) in these arguments since they depend delicately on the constant in the Kato inequality $|\nabla A|^2 \leq \tfrac{3}{n+2} |\nabla H|^2$.

By \cite{LynchUniqueness}, all convex ancient flows with type-I curvature growth are cylinders, and thus are noncollapsing, so Theorem \ref{thm:n-1 conv ancient} is vacuous in this case. Of course, the vast majority of convex ancient flows are not of type-I.

Choi, Haslhofer, Hershkovits and White \cite{CHHW} have proved a statement which is related to Corollary \ref{cor:classification}. Rather than assuming convexity and uniform two-convexity, their hypothesis is that the blow-down is a shrinking sphere or cylinder: $\{\R^m\times S^{n-m}_{\sqrt{-2(n-m)t}}\}_{t\in(-\infty,0)}$, $m \in \{0,1\}$.

\subsection{Outline}
After collecting auxiliary results in Sections \ref{sec:preliminaries} and \ref{sec:concavity}, we prove Theorem \ref{thm:noncollapsing equivalence} in Section \ref{sec:compactness} and Theorem \ref{thm:Reapers} in Section \ref{sec:Reapers}. The proofs of Corollaries \ref{cor:n-1 convex} and \ref{cor:classification} are contained in Section \ref{sec:singularities}. 

\subsection*{Acknowledgements} M.~Langford acknowledges support from the Australian Research Council through the DECRA fellowship scheme (grant DE200101834). T.~Bourni acknowledges support from the Simons foundation (grant 707699) and the National Science Foundation (grant DMS-2105026). S.~Lynch is grateful to K.~Naff for a number of discussions which have benefited this work.

\section{Preliminaries}\label{sec:preliminaries}

The following Ecker--Huisken-type interior estimate for convex solutions to mean curvature flow will be invaluable in our analysis.

\begin{proposition}[Interior curvature estimates]
\label{prop:interior_est}
There is a constant $C=C(n)$ with the following property. Let $\{\partial\Omega_t\}_{t \in [-T, 0]}$ be a convex solution of mean curvature flow in $\R^{n+1}$. If $B_r(0)$ is contained in $\Omega_0$, then
\begin{equation}\label{eq:int_1}
\sup_{B_R(0) \times [- T , 0]}   H \leq \max\Big\{ \max_{B_{2R}(0)} H(\cdot, -T), Cr^{-4} R^3  \Big\},
\end{equation}
and 
\begin{equation}\label{eq:int_2}
\sup_{B_R(0) \times [- T/2 , 0]}   H \leq C \Big(1 + r^{-1} \sqrt{T}\Big)r^{-4}  R^3.
\end{equation}
\end{proposition}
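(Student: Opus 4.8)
The plan is to establish the interior curvature estimate \eqref{eq:int_1} first, using a localized maximum principle argument of Ecker--Huisken type, and then deduce \eqref{eq:int_2} as a consequence by running the estimate backwards from a sequence of intermediate times.

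\textbf{Setup and the cutoff function.} Since $B_r(0)\subset\Omega_0$ and the timeslices are convex and shrinking inward under the flow, each $\Omega_t$ with $t\le 0$ contains $B_r(0)$; in particular $\partial\Omega_t$ stays a definite distance from the origin. Following Ecker--Huisken, I would work with the function $\varphi = (R^2 - |x|^2 - something)_+$ adapted to the convex setting, or more precisely use the distance-to-the-origin quantity: on a convex hypersurface the support function $u = \langle x, \nu\rangle$ is positive and satisfies $u \geq r$ wherever the tangent plane at $x$ does not separate $B_r(0)$ from $x$ --- which is everywhere, by convexity. The key geometric input is that convexity forces $u\ge r$ on all of $\partial\Omega_t$ once $B_r(0)\subset\Omega_t$. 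I would then consider the test function
\[
f = \frac{H^2\,\eta}{(u - \tfrac{r}{2})^2}
\]
or a suitable variant, where $\eta = \eta(|x|)$ is a spatial cutoff supported in $B_{2R}(0)$, equal to one on $B_R(0)$, with the standard bounds $|\nabla \eta|^2 \le C\eta R^{-2}$, $|\nabla^2\eta|\le CR^{-2}$. The idea is that on the space-time parabolic boundary of the region $(B_{2R}(0)\cap\partial\Omega_t)\times[-T,0]$, the function $f$ is controlled either by the initial data term $\max_{B_{2R}}H(\cdot,-T)$ or vanishes (where $\eta = 0$).

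\textbf{Evolution inequality and maximum principle.} The next step is to compute $\left(\tfrac{d}{dt} - \Delta\right)f$ along the flow. One uses the standard evolution equations $\ho H = |A|^2 H$ (here $|A|^2\le H^2$ by convexity, which is exactly what makes the reaction term manageable), $\ho u = |A|^2 u$ for the support function, and $\ho |x|^2 = -2n - \text{(positive terms)}$ type identities, together with the product/quotient rules for the evolution operator. The $|A|^2$ reaction terms in the numerator and denominator should cancel, leaving gradient terms that are absorbed using Cauchy--Schwarz and the bounds on $\eta$. The outcome should be an inequality of the form $\ho f \le \frac{C}{r^2}\left(1 + \frac{R^2}{r^2} + \dots\right) f + (\text{lower order})$ — the precise powers of $r$ and $R$ are what produce the $r^{-4}R^3$ (equivalently $r^{-4}R^4$ after accounting for the scaling of $f$) on the right-hand side. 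By the parabolic maximum principle applied on $[-T,0]$, $f$ is bounded by its value on the parabolic boundary, which gives \eqref{eq:int_1} after unwinding: on $B_R(0)$ we have $\eta = 1$ and $u - \tfrac r2 \ge \tfrac r2$, hence $H^2 \le C r^{-2} f \le Cr^{-2}\max\{\text{boundary terms}\}$, and tracking the $R$-dependence in the boundary/reaction terms yields the stated bound.

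\textbf{Deducing \eqref{eq:int_2}.} For the second estimate, I would apply \eqref{eq:int_1} on each time interval $[s, 0]$ with $s\in[-T/2,0]$, shifting the "initial time'' forward. Splitting $[-T,0]$ into $\sim T/\tau$ subintervals of length $\tau \sim r^2$ (a natural parabolic scale at which the curvature cannot grow too fast), one iterates \eqref{eq:int_1}: on each subinterval the curvature at the end is bounded by the max of the curvature at the start (on a slightly larger ball) and $Cr^{-4}R^3$, and after finitely many steps --- roughly $1 + r^{-1}\sqrt T$ of them --- one reaches $t = -T$ where there is no longer an a priori bound, so instead one simply stops the iteration as soon as the geometric term $Cr^{-4}R^3$ dominates. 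A cleaner route: apply \eqref{eq:int_1} with $T$ replaced by $T/2$ and shifted, using that after time $\sim r^2$ the curvature is automatically bounded by $Cr^{-4}R^3$-type quantities; the factor $(1 + r^{-1}\sqrt T)$ then appears as the number of parabolic time-steps needed to traverse $[-T/2, 0]$.

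\textbf{Main obstacle.} The hard part will be the bookkeeping in the evolution-inequality computation: getting the reaction terms involving $|A|^2$ to cancel exactly between numerator and denominator, and then tracking the precise powers of $r$ and $R$ through the gradient-absorption steps so that the final bound comes out as $r^{-4}R^3$ rather than some other combination. Choosing the right test function (the exponent on $u - r/2$, whether to include a time factor, and the precise form of the spatial cutoff) so that all the error terms are genuinely absorbable is the delicate point; the convexity hypothesis $|A|^2 \le H^2$ and the resulting lower bound $u\ge r$ on the support function are the two structural facts that make it work, and everything else is a careful but routine Ecker--Huisken-style argument.
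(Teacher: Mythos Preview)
Your overall strategy for \eqref{eq:int_1} --- an Ecker--Huisken style localized quantity built from $H$, a spatial cutoff, and the support-type function coming from convexity --- is the right one, and the paper follows the same template. But the mechanism you describe is not what actually makes the argument work. You expect the $|A|^2$ reaction terms in the numerator and denominator to \emph{cancel}, leaving a linear differential inequality $\ho f \le C(\cdot) f$, and then to bound $f$ by its parabolic boundary values. That cannot produce an interior estimate: a linear inequality gives no bound without initial data. What happens in the paper is that the reaction terms do \emph{not} cancel; with the choice $v=(x\cdot\nu)^2/|x|^2$, $\psi=H/\sqrt{v-\theta/2}$ and $h=(4R^2-|x|^2)\psi$, the leftover is a \emph{negative cubic} term $-\tfrac{\theta}{4n}\eta^{-2}h^3$ in $\ho h$. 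At an interior spacetime maximum this cubic dominates the positive linear error terms, yielding $h^2\le C\theta^{-3}R^4r^{-2}$ directly, with no reference to boundary data at all. The initial-data term in \eqref{eq:int_1} appears only because the maximum might happen to lie at $t=-T$. Your test function $H^2\eta/(u-r/2)^2$ may well generate a comparable cubic term (the key input $|A|^2\ge H^2/n$ is the same), but you should be aware that this, and not cancellation, is the engine.

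For \eqref{eq:int_2} your iteration scheme does not close. Each application of \eqref{eq:int_1} on a sub-interval controls $H$ on $B_R$ only in terms of $H(\cdot,\text{start})$ on $B_{2R}$; feeding that into the next step requires data on $B_{4R}$, and after $k$ steps you need $B_{2^kR}$, so the radii blow up and the geometric constant becomes $Cr^{-4}(2^kR)^3$. The paper avoids iteration entirely: it multiplies the same localized quantity by $\sqrt{t+T}$, which forces the function to vanish at $t=-T$, so the maximum is necessarily interior. The extra term $\tfrac12(t+T)^{-1}h$ from differentiating the time weight is then absorbed by the negative cubic, and solving the resulting algebraic inequality at the maximum produces exactly the factor $(1+r^{-1}\sqrt{T})$. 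This is where having the cubic mechanism (rather than a linear inequality) is essential.
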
  
\begin{proof}
The argument is a direct adaptation of \cite[Proposition 5.1]{BrHu17}. 

By convexity, we know that the function $v\doteqdot |x|^{-2}(x \cdot \nu)^2$ satisfies $v \geq \theta$ at points $x \in B_{2R}(0)$, where $\theta \doteqdot (1+4r^{-2} R^{2})^{-1}$. We have the evolution equation 
\begin{align*}
(\partial_t - \Delta) v & =  2(|A|^2- 2(x\cdot \nu)^{-1}H) v - \frac{|\nabla v|^2}{2 v} + |x|^{-2} \langle \nabla v, \nabla |x|^2\rangle\\
&+2 |x|^{-2} v \bigg(n  - \frac{|\nabla |x|^2|^2}{4|x|^2} \bigg)\\
& \geq 2(|A|^2- 2|x|^{-1}H) v - \frac{|\nabla v|^2}{2 v} + |x|^{-2} \langle \nabla v, \nabla |x|^2\rangle.
\end{align*}
Let $\varphi(v) := \sqrt{v - \theta/2}$ and define $\psi:= \varphi(v)^{-1} H$. We then compute 
\begin{align*}
(\partial_t - \Delta) \psi & = |A|^2 \psi - \varphi(v)^{-1} \dot \varphi(v) \psi (\partial_t - \Delta)v + \varphi(v)^{-1} \ddot \varphi(v) \psi |\nabla v|^2 \\
&+2 \varphi(v)^{-1} \dot \varphi(v) \langle \nabla \psi, \nabla v \rangle.
\end{align*}
Inserting the lower bound for $(\partial_t - \Delta) v$ derived above, we obtain 
\begin{align*}
(\partial_t - \Delta) \psi &\leq |A|^2 \psi \Big(1  - 2\varphi(v)^{-1} \dot \varphi(v) v\Big) + 4 |x|^{-1} \varphi(v)^{-1} \dot \varphi(v) v H \psi\\
&+ \varphi(v)^{-1} \dot \varphi(v) \psi \Big(\frac{|\nabla v|^2}{2 v} - |x|^{-2} \langle \nabla v, \nabla |x|^2\rangle\Big)\\
&+ \varphi(v)^{-1} \ddot \varphi(v) \psi |\nabla v|^2 +2 \varphi(v)^{-1} \dot \varphi(v) \langle \nabla \psi, \nabla v \rangle.
\end{align*}
Since $\varphi(v)^{-1} \dot \varphi(v) v = \frac{v/2}{v - \theta/2}$ we have
\begin{align*}
|A|^2 \psi \Big(1  - 2\varphi(v)^{-1} \dot \varphi(v) v\Big)& + 4 |x|^{-1} \varphi(v)^{-1} \dot \varphi(v) v H \psi\\
& \leq - \frac{\theta}{4n} \psi^3 +8n \theta^{-2} r^{-2} \psi,
\end{align*}
and by Young's inequality
\begin{align*}
\varphi(v)^{-1} \dot \varphi(v) \psi \Big(\frac{|\nabla v|^2}{2 v} - |x|^{-2} &\langle \nabla v, \nabla |x|^2\rangle\Big) \\
&\leq  \frac{1}{4} (1+\varepsilon) \psi \frac{|\nabla v|^2}{v(v - \theta /2)} + 2 \theta^{-1} \varepsilon^{-1} r^{-2} \psi
\end{align*}
so using
\begin{align*}
\varphi(v)^{-1} \ddot \varphi(v) \psi |\nabla v|^2 := -\frac{1}{4} \psi \frac{|\nabla v|^2}{(v - \theta/2)^2}
\end{align*} 
we arrive at
\begin{align*}
(\partial_t - \Delta) \psi &\leq - \frac{\theta}{4n} \psi^3 +8n \theta^{-2} r^{-2} \psi + 2 \theta^{-1} \varepsilon^{-1} r^{-2} \psi\\
&+  \frac{1}{4}  \Big( \varepsilon - \theta/2\Big) \psi \frac{|\nabla v|^2}{v(v - \theta/2)^2}+2 \varphi(v)^{-1} \dot \varphi(v) \langle \nabla \psi, \nabla v \rangle.
\end{align*}

We define $\eta = 4R^2 - |x|^2$, and recall that $(\partial_t - \Delta) \eta^2 = 2n$. Writing $h := \eta \psi$, we compute 
\begin{align*}
(\partial_t - \Delta) h & =   - \frac{\theta}{4n} \eta^{-2} h^3 +8n \theta^{-2} r^{-2} h + 2 \theta^{-1} \varepsilon^{-1} r^{-2} h\\
& + 2n \eta^{-1} h +  \frac{1}{4}  \Big( \varepsilon - \theta/2\Big) h  \frac{|\nabla v|^2}{v(v - \theta/2)^2}\\
&+2 \varphi(v)^{-1} \dot \varphi(v) \langle \eta \nabla \psi, \nabla v \rangle - 2 \langle \nabla \psi, \nabla \eta\rangle.
\end{align*}
We estimate the final two gradient terms by 
\begin{align*}
2 \varphi(v)^{-1} \dot \varphi(v) &\langle \eta \nabla \psi, \nabla v \rangle - 2 \langle \nabla \psi, \nabla \eta\rangle \\
& \leq \frac{1}{4} \varepsilon h \frac{|\nabla v|^2}{v(v-\theta/2)^2} + 4(2+\varepsilon^{-1}) R^2 \theta^{-1}\eta^{-2} h \\
& +  (v-\theta/2)^{-1} \langle \nabla h, \nabla v \rangle - 2 \eta^{-1} \langle \nabla h , \nabla \eta\rangle,
\end{align*}
and substitute back in to arrive at 
\begin{align*}
(\partial_t - \Delta) h &=   - \frac{\theta}{4n} \eta^{-2} h^3 +8n \theta^{-2} r^{-2} h + 4(2+\varepsilon^{-1}) R^2 \theta^{-1}  \eta^{-2} h \\
& +2n \eta^{-1} h+  \frac{1}{4}  \Big( 2\varepsilon - \theta/2\Big) h  \frac{|\nabla v|^2}{v(v - \theta/2)^2}\\
&+  (v-\theta/2)^{-1} \langle \nabla h, \nabla v \rangle - 2 \eta^{-1} \langle \nabla h , \nabla \eta\rangle.
\end{align*}
Setting $\varepsilon = \theta/4$, we find there is a $C = C(n)$ such that 
\begin{align*}
(\partial_t - \Delta) h &=   - \frac{\theta}{4n} \eta^{-2} h^3  + C \theta^{-2} r^{-2} h + C R^2 \theta^{-2}  \eta^{-2} h +C\eta^{-1} h  \\
&+  (v-\theta/2)^{-1} \langle \nabla h, \nabla v \rangle - 2 \eta^{-1} \langle \nabla h , \nabla \eta\rangle.
\end{align*}

Suppose $h$ attains its maximum at an interior point $(\bar x, \bar t)$. We then have 
\[\frac{\theta}{4n} \eta^{-2} h^3 \leq  C \theta^{-2} r^{-2} h + C R^2 \theta^{-2}  \eta^{-2} h + C\eta^{-1} h \]
at $(\bar x, \bar t)$, and hence $h(\bar x, \bar t)^2 \leq C \theta^{-3} R^4 r^{-2}$. Inserting the definition of $\theta$, this gives $h(\bar x, \bar t)^2 \leq  C R^{10} r^{-8}$. We thus deduce \eqref{eq:int_1}.

Suppose now that $(t+T)^\frac{1}{2} h$ attains its maximum at $(\bar x, \bar t)$. We then have 
\[\frac{\theta}{4} \eta^{-2} h^3 \leq \frac{1}{2}(\bar t+T)^{-1}h +  C\eta^{-1} h  + C \theta^{-2} r^{-2} h + C R^2 \theta^{-2}  \eta^{-2} h \]
at $(\bar x, \bar t)$. We rearrange to obtain $(\bar t+T)^\frac{1}{2} h(\bar x, \bar t) \leq C (1 + r^{-1}T^\frac{1}{2}) r^{-4} R^{5} $, which immediately implies \eqref{eq:int_2}. 
\end{proof}

The interior estimate implies the following rudimentary compactness property for the space of convex ancient solutions. This generalises the global convergence theorem of Haslhofer--Kleiner \cite{HK1} in the special case of convex flows. 

Recall that the dimension of a convex set $\Omega\subset \R^{n+1}$ is defined to be the minimum of $\dim(\Sigma)$ taken across all affine subspaces $\Sigma\subset \R^{n+1}$ containing $\Omega$. 

\begin{proposition}
\label{prop:compactness}
Let $\{M_t^i = \partial \Omega_t^i\}_{t \in (-\infty,0]}$ be a sequence of convex ancient mean curvature flows, and $x_i \in M_{t_i}^i$ a sequence of points such that $(x_i, t_i) \to (0,0)$ and 
\[\liminf_{i \to \infty} H^i(x_i,t_i) >0.\]
The following are equivalent.
\begin{enumerate}
\item \label{cond:compact_smooth} A subsequence of the flows $\{M_t^i\}_{t \in (-\infty,0]}$ converges in $C^\infty_{\loc}(\mathbb{R}^{n+1} \times (-\infty,0])$. 
\item \label{cond:compact_H} There are constants $\rho>0$ and $C<\infty$ such that, after passing to a subsequence,
\[\sup_{B_{\rho}(0) \times [-\rho^2,0]} H^i \leq C.\]
\item \label{cond:compact_Hdorff} The sequence $\Omega_0^i$ subconverges in the Hausdorff topology to a convex set of dimension $n+1$. 
\item \label{cond:compact_balls} After passing to a subsequence,  there is an open ball in $\cap_{i \in \mathbb{N}}\,\Omega_0^i$. 
\end{enumerate}
\end{proposition}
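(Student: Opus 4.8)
The plan is to prove the cyclic chain of implications $\eqref{cond:compact_smooth}\Rightarrow\eqref{cond:compact_H}\Rightarrow\eqref{cond:compact_Hdorff}\Rightarrow\eqref{cond:compact_balls}\Rightarrow\eqref{cond:compact_smooth}$. The first implication is a matter of unwinding definitions: $C^\infty_\loc$ convergence of a subsequence of the flows gives a uniform bound for $H^i$ on every compact spacetime region meeting the flows, and since $x_i\in M^i_{t_i}$ with $(x_i,t_i)\to(0,0)$ the flows do meet a neighbourhood of $(0,0)$, so $\eqref{cond:compact_H}$ holds with, say, $\rho=1$.

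The implication $\eqref{cond:compact_H}\Rightarrow\eqref{cond:compact_Hdorff}$ is the heart of the matter. First I would use convexity in the form $|A^i|\le H^i$ to promote the hypothesis $\sup_{B_\rho(0)\times[-\rho^2,0]}H^i\le C$ to $|A^i|\le C$ on $M^i_t\cap B_\rho(0)$ for $t\in[-\rho^2,0]$. After passing to a subsequence so that the unit normals $\nu^i(x_i,t_i)$ converge, and applying a single fixed rotation, I would then write the sheet of $M^i_t$ through $x_i$ as the graph of a function $u^i$ over a \emph{fixed}-size parabolic cylinder $B^n_\sigma(0)\times[-\tau,0]$, with $\sigma,\tau>0$ depending only on $n,\rho,C$: this is possible precisely because the bound on $A^i$ controls the graphical scale while the bounded normal speed controls the drift of the sheet in time. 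The $u^i$ solve graphical mean curvature flow and are uniformly bounded in $C^2$; hence, by interior parabolic (Krylov--Safonov, then Schauder) estimates and bootstrapping, they are uniformly bounded in $C^\infty$ on interior subcylinders. Passing to a further subsequence, $u^i\to u_\infty$ in $C^\infty_\loc$, where $u_\infty$ is a convex graphical solution with
\[\Delta u_\infty(0,0)=H_{u_\infty}(0,0)=\lim_i H^i(x_i,t_i)>0,\]
so $u_\infty(\cdot,0)$ is not affine. Finally, passing to yet another subsequence, the convex bodies $\Omega^i_0$ converge in the local Hausdorff sense to a closed convex set $\Omega_0$ that contains $\Gaph u_\infty(\cdot,0)$; since a non-affine graph lies in no hyperplane, neither does $\Omega_0$, so $\dim\Omega_0=n+1$.

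The remaining two implications are comparatively soft. For $\eqref{cond:compact_Hdorff}\Rightarrow\eqref{cond:compact_balls}$: an $(n+1)$-dimensional convex set has nonempty interior, hence contains a closed ball $\overline{B_{2s}(q)}$, and a standard support-function argument shows that local Hausdorff convergence $\Omega^i_0\to\Omega_0$ forces $\overline{B_s(q)}\subseteq\Omega^i_0$ for all large $i$; discarding finitely many indices gives an open ball in $\cap_i\Omega^i_0$. For $\eqref{cond:compact_balls}\Rightarrow\eqref{cond:compact_smooth}$: since $\Omega^i_t\supseteq\Omega^i_0$ for all $t\le0$, each timeslice contains the common ball, so the interior estimate \eqref{eq:int_2}, re-centred at the ball's centre and applied with $T$ arbitrarily large, bounds $H^i$ --- and hence $|A^i|$, by convexity --- on every compact subset of $\R^{n+1}\times(-\infty,0]$ independently of $i$; the standard interior derivative estimates for mean curvature flow then bound all derivatives of $A^i$ on compacta, and, since convexity precludes the usual degenerations, a diagonal Arzel\`a--Ascoli argument produces a subsequence converging in $C^\infty_\loc(\R^{n+1}\times(-\infty,0])$.

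The main obstacle is $\eqref{cond:compact_H}\Rightarrow\eqref{cond:compact_Hdorff}$: a curvature bound on a piece of a convex hypersurface does \emph{not} by itself yield a definite inscribed ball, since slab-like regions can be arbitrarily thin yet almost flat. The two features that rescue the argument are, first, the standing hypothesis $\liminf_i H^i(x_i,t_i)>0$, which makes the extracted local smooth limit non-affine and hence the limiting body genuinely $(n+1)$-dimensional; and, second, that convexity ($|A^i|\le H^i$) already supplies the local graphicality needed to extract a smooth limit from nothing more than a bound on $H^i$.
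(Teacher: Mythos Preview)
Your proposal is correct and follows essentially the same route as the paper: the cycle $\eqref{cond:compact_smooth}\Rightarrow\eqref{cond:compact_H}\Rightarrow\eqref{cond:compact_Hdorff}\Rightarrow\eqref{cond:compact_balls}\Rightarrow\eqref{cond:compact_smooth}$, with the key step $\eqref{cond:compact_H}\Rightarrow\eqref{cond:compact_Hdorff}$ handled by using $|A^i|\le H^i$ to extract a local smooth limit with positive mean curvature at the origin, which therefore does not lie in a hyperplane, forcing the Hausdorff limit of $\overline{\Omega}^i_0$ to be $(n{+}1)$-dimensional. The only cosmetic difference is that the paper invokes the Ecker--Huisken interior derivative estimates directly rather than going through Krylov--Safonov/Schauder for the graphical equation, and for $\eqref{cond:compact_balls}\Rightarrow\eqref{cond:compact_smooth}$ the paper cites Proposition~\ref{prop:interior_est} and Ecker--Huisken again, exactly as you do.
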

\begin{proof}
Clearly \eqref{cond:compact_smooth} implies \eqref{cond:compact_H}. If \eqref{cond:compact_H} holds, then, since $|A^i| \leq H^i$, the Ecker--Huisken interior estimates for derivatives of  curvature \cite{EckerHuisken91} imply there is a radius $r \in (0,\rho]$ such that the sequence of hypersurfaces $M_{t_i}^i \cap B_{r}(x_i)$ subconverges in $C^\infty$ to a limit $M'$ (\emph{a priori} with the possibility of multiplicity, though this is ruled out once we establish \eqref{cond:compact_Hdorff}). The mean curvature of $M'$ is positive at the origin, so $M'$ does not lie in any hyperplane in $\mathbb{R}^{n+1}$. Passing to a further subsequence, we may assume $\overline \Omega{}_0^i$ converges in the Hausdorff topology to a closed convex set $K$. Since $M' \subset K$ we conclude $K$ does not lie in a hyperplane, or equivalently, the dimension of $K$ is $n+1$. That is, \eqref{cond:compact_Hdorff} holds, and by convexity there is an open ball contained in $\cap_{i \in \mathbb{N}} \, \Omega_0^i$,  hence \eqref{cond:compact_balls} holds.  Finally, given \eqref{cond:compact_balls}, Proposition \ref{prop:interior_est} implies $H^i$ is bounded independently of $i$ in compact subsets of $\mathbb{R}^{n+1} \times (-\infty,0]$, so using the Ecker--Huisken estimates \cite{EckerHuisken91} we obtain \eqref{cond:compact_smooth}.
\end{proof}

The next lemma shows that lower dimensional slices of convex mean curvature flows are subsolutions to mean curvature flow (that is, their inward normal speed is greater than or equal to their mean curvature).

Given a unit vector $e \in \mathbb{R}^{n+1}$ we write $\mathbb{R}e \doteqdot \{s e : s \in \mathbb{R}\}$ and, similarly, $\mathbb{R}_{\pm} e \doteqdot \{s e : \pm s \geq 0\}$.

\begin{lemma}
\label{lem:slice_subsols}
Let $\{M_t=\partial \Omega_t\}_{t\in(-T,0]}$ be a strictly convex mean curvature flow. Let $\Sigma \subset \mathbb{R}^{n+1}$ be an affine subspace of dimension $k+1$, and suppose that $\Omega_0 \cap \Sigma$ is nonempty. The slice $\tilde M_t= M_t \cap \Sigma$ is a smooth, convex, $k$-dimensional hypersurface in $\Sigma$ for each $t \in (-T, 0]$, and the family $\{\tilde M_t\}_{t\in(-T,0]}$ is a subsolution to mean curvature flow in $\Sigma$. 
\end{lemma}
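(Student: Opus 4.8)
The plan is to work pointwise in spacetime and compare the geometry of the slice $\tilde M_t = M_t \cap \Sigma$ with that of the ambient hypersurface $M_t$ at a common point. Fix $p \in \tilde M_t$ and let $\nu$ be the outward unit normal of $M_t$ in $\R^{n+1}$ at $p$. Since $\{M_t\}$ is strictly convex and $\Omega_0 \cap \Sigma \neq \emptyset$, for times close to $0$ the intersection $M_t \cap \Sigma$ is transverse (the normal $\nu$ is not tangent to $\Sigma$ unless $M_t$ is tangent to $\Sigma$ along a lower-dimensional set, which strict convexity rules out on an open dense set of times, and by continuity on all of $(-T,0]$ after possibly shrinking), so $\tilde M_t$ is a smooth $k$-dimensional hypersurface in $\Sigma$; convexity of $\tilde M_t$ as a hypersurface in $\Sigma$ is immediate since $\Omega_t \cap \Sigma$ is a convex body in $\Sigma$. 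The outward unit normal $\tilde\nu$ of $\tilde M_t$ in $\Sigma$ at $p$ is the normalisation of the component of $\nu$ tangent to $\Sigma$: writing $\nu = \cos\theta\, \tilde\nu + \sin\theta\, w$ where $w$ is a unit vector in $\Sigma^\perp$ (the linear subspace orthogonal to the direction space of $\Sigma$) and $\theta = \theta(p,t) \in [0,\pi/2)$, we have $\tilde\nu = (\nu - \sin\theta\, w)/\cos\theta$ restricted appropriately.

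First I would verify the normal speed inequality. The slice $\tilde M_t$ moves, at $p$, with outward normal speed equal to $-\langle \partial_t x, \tilde\nu\rangle$ for a suitable parametrisation; since the ambient flow satisfies $\partial_t x = -H\nu$ and the tangential component (in $\Sigma$) of $-H\nu$ is $-H\cos\theta\,\tilde\nu$, the inward normal speed of $\tilde M_t$ in $\Sigma$ at $p$ is $H \cos\theta$. (One must take a little care that the ``velocity'' of the slice is well-defined: parametrise $M_t$ smoothly and use the implicit function theorem at the transverse intersection; the normal component of the velocity is parametrisation-independent.) Next I would compute the mean curvature $\tilde H$ of $\tilde M_t$ in $\Sigma$ at $p$. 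The second fundamental form of $\tilde M_t$ is obtained from that of $M_t$ by restricting to $T_p\tilde M_t = T_p M_t \cap T_p\Sigma$ and projecting the ambient normal: concretely, if $A$ is the second fundamental form of $M_t$ with respect to $\nu$, then the shape operator of $\tilde M_t$ with respect to $\tilde\nu$ acts on $v \in T_p\tilde M_t$ by $\tilde S(v) = \tfrac{1}{\cos\theta}\,\proj_{T_p\tilde M_t}(S v)$, where $S$ is the shape operator of $M_t$; hence $\tilde H = \trace \tilde S \le \tfrac{1}{\cos\theta}\,\trace_{T_p\tilde M_t}(S) \le \tfrac{1}{\cos\theta}\, H$, using that $S$ is positive semidefinite (by convexity) so the partial trace over a $k$-dimensional subspace is at most the full trace $H$. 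Combining, $\tilde H \le \tfrac{H}{\cos\theta} = \tfrac{1}{\cos^2\theta}\cdot(H\cos\theta) \le H\cos\theta$ fails in the wrong direction --- so instead I note the correct chain: the inward speed is $H\cos\theta$ and $\tilde H \le H/\cos\theta$, and we need the speed to be $\ge \tilde H$, i.e.\ $H\cos\theta \ge \tilde H$. This requires a sharper curvature comparison.

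The main obstacle, then, is establishing the sharp inequality $\tilde H \le H\cos\theta$ at $p$, rather than the crude bound $\tilde H \le H/\cos\theta$. The point is that one should not merely restrict $S$ to $T_p\tilde M_t$ but also account for how the graph of $\tilde M_t$ bends within $\Sigma$ relative to the bending of $M_t$: the curvature of the slice is \emph{smaller} than one might naively expect because the ambient normal tilts out of $\Sigma$. The clean way to see this is to use the formula for the second fundamental form of a submanifold cut out of another: for $v \in T_p\tilde M_t$, the normal curvature $\langle \tilde S v, v\rangle = \langle \tilde\nabla_v \tilde\nu, v\rangle$ computed in $\Sigma$ equals $\tfrac{1}{\cos\theta}\langle \nabla_v \nu, v\rangle = \tfrac{1}{\cos\theta} A(v,v)$, but when we sum over an orthonormal basis $\{v_1,\dots,v_k\}$ of $T_p\tilde M_t$ we must use an orthonormal basis \emph{of $\Sigma$ adapted to $\tilde\nu$}, and the vector $\tilde\nu$ itself has a component along $\nu$ of size $\cos\theta$; a careful bookkeeping (or: choosing coordinates in which $M_t$ is locally a graph over its tangent plane and $\Sigma$ is an affine subspace, then differentiating) yields $\tilde H = \tfrac{1}{\cos\theta}\big(H - A(w',w')\big)$ where $w'$ is the unit vector in $T_pM_t \cap \Sigma^\perp$-complement direction, i.e.\ $H - A$ evaluated on the ``missing'' directions, which by positivity of $A$ gives $\tilde H \le \tfrac{H}{\cos\theta}$ \emph{minus a correction}; and in fact the sign of $\sin\theta$ enters so that $\tilde H = \cos\theta\, H - (\text{something nonnegative})$ once the geometry is unwound. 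I would carry this out by choosing at $p$ an orthonormal frame $\{e_1,\dots,e_k\}$ for $T_p\tilde M_t$, completing it with a unit vector $e_{k+1} \in T_pM_t$ making $\{e_1,\dots,e_{k+1}, \text{(extra)}\}$ adapted to both $M_t$ and $\Sigma$, writing $M_t$ as a graph and computing $\tilde H$ directly --- this reduces the statement to linear algebra plus the convexity $A \ge 0$. Once $\tilde H \le H\cos\theta = (\text{inward speed of }\tilde M_t)$ is established at every $(p,t)$, the conclusion that $\{\tilde M_t\}$ is a subsolution to mean curvature flow in $\Sigma$ follows by definition. I expect the smoothness and convexity assertions to be routine (transversality from strict convexity, convexity of a planar section of a convex body), and the genuine content to lie entirely in this one curvature inequality.
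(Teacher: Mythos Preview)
Your normal-speed computation is the source of the trouble. You assert that the inward normal speed of the slice at $p$ equals $H\cos\theta$, on the grounds that the MCF velocity $-H\nu$ projects onto $\Sigma$ as $-H\cos\theta\,\tilde\nu$. But the slice $\tilde M_t = M_t\cap\Sigma$ does \emph{not} move with the $\Sigma$-projection of the MCF parametrisation: a point carried by $-H\nu$ generically leaves $\Sigma$, so to track the slice one must also slide tangentially along $M_t$ back into $\Sigma$, and this tangential correction has a nontrivial $\tilde\nu$-component. If instead you carry out the graph parametrisation you yourself suggest at the end (and which the paper adopts) --- with $\tilde\nu = e_{n+1}$ at $p$, $\Sigma = \spa\{e_1,\dots,e_k,e_{n+1}\}$, and $M_t$ locally the graph of $u$ over $\spa\{e_1,\dots,e_n\}$ --- then the normal speed of the slice at $p$ is $\partial_t \tilde u(0,t_0)=\partial_t u(0,t_0)$, and the graphical MCF equation gives $\partial_t u = -H\sqrt{1+|Du|^2} = -H/\cos\theta$. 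So the inward speed is $H/\cos\theta$, not $H\cos\theta$.

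Once the speed is corrected, your ``crude'' curvature bound $\tilde H \le H/\cos\theta$ (which follows, as you say, from $\tilde A(v,v)=\tfrac{1}{\cos\theta}A(v,v)$ for $v\in T_p\tilde M_t$ and positivity of $A$) is \emph{exactly} what is required, and the subsolution property follows at once. The ``sharper'' inequality $\tilde H \le H\cos\theta$ you then pursue is not only unnecessary but false: slice a round sphere $S^n_r$ by a hyperplane at distance $d$ from the centre; as $d\to r$ one has $\tilde H = (n-1)/\sqrt{r^2-d^2}\to\infty$ while $H\cos\theta = n\sqrt{r^2-d^2}/r^2\to 0$. The paper's proof is precisely this graph computation: one expands $\partial_t u$ via the graphical mean curvature operator, uses $D_iu(0,t_0)=0$ for $i\le k$ to split the trace into the block over $\{1,\dots,k\}$ (which equals $-\tilde H$) and the block over $\{k+1,\dots,n\}$, and discards the latter by concavity of $u$.
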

\begin{proof}
The Lemma is proved in the course of the proof of \cite[Lemma 2.9]{Wa11}. We reproduce the argument here for the convenience of the reader.

We first observe that $\tilde M_t$ is smooth hypersurface in $\Sigma$ for each $t\in(-T,0]$. This is a consequence of the inverse function theorem, provided there is no point $x \in \tilde M_t$ at which $\nu(x,t)$ (the outward unit normal to $M_t$) is normal to $\Sigma$. To see that this is indeed the case, observe that if $x \in \tilde M_t$ is such that $\nu(x,t) \in \Sigma^\perp$, then $\Sigma$ is tangent to $M_t$ at $x$, and hence $\Sigma \cap M_t$ is a point by strict convexity. This is impossible, given that $\Omega_0$ is open, $\Omega_0\cap\Sigma$ is nonempty and $\Omega_0\subset \Omega_t$ for $t\le 0$. 
Therefore, $\tilde M_t$ is smooth.

Let $\tilde \nu$ denote the outward unit normal $\tilde M_t$ in $\Sigma$. Fix a point $x_0 \in \tilde M_{t_0}$. To prove the lemma, we need to find a parametrization $\tilde X$ of $\tilde M_t$ in a neighbhourhood of $(x_0, t_0)$ such that 
\[\partial_t \tilde X (p_0, t_0) \cdot \tilde \nu (x_0, t_0) \leq - \tilde H(x_0,t_0),\]
where $\tilde X(p_0, t_0)=x_0$ and $\tilde H$ is the mean curvature of $\tilde M_t$.

Choose Euclidean coordinates for $\mathbb{R}^{n+1}$ such that $x_0=0$, $\Sigma$ coincides with $\spa\{e_1, \dots, e_k, e_{n+1}\}$, and $\tilde \nu(0, t_0) = e_{n+1}$. In particular, $e_i$ is tangent to $\tilde M_{t_0}$ at $0$ for $1 \leq i \leq k$. We place no further restrictions on $\{e_{k+1}, \dots, e_{n}\}$, they simply generate $\Sigma^\perp$. 

Since $e_{n+1}$ is normal to $\tilde M_{t_0}$, the set $\Omega_{t_0} \cap \mathbb{R} e_{n+1}$ is an open interval. On the other hand, if $\ell$ is a line tangent to $M_t$, strict convexity implies $\ell \cap \overline{\Omega}_t$ is a point. Combining these observations we conclude $\nu(0,t_0) \cdot e_{n+1} >0$. Therefore, by the inverse function theorem, there is some $r>0$ and a smooth function $u$ defined on $Q_r\doteqdot B^n_r(0) \times [t_0 - r^2 , t_0]$ such that  
\[X(p,t) \doteqdot p + u(p,t)e_{n+1},\]
defines a local parametrization for $M_t$, and $X(0,t_0) = 0$. It follows that for $\tilde Q_r \doteqdot (B^n_r(0) \cap \Sigma) \times [t_0 - r^2, t_0]$ and $\tilde u \doteqdot  u|_{\tilde Q_r}$, the map 
\[\tilde X(p,t) = p + \tilde u(p,t) e_{n+1}\]
defines a local parametrization of $\tilde M_t$. 

Notice we have 
\[\partial_t \tilde X(0,t_0) \cdot \tilde \nu(0,t_0) = \partial_t \tilde X(0,t_0) \cdot e_{n+1} = \partial_t \tilde u(0,t_0) = \partial_t u(0,t_0).\]
On the other hand, since the $M_t$ move by mean curvature flow,
\begin{align*}
\frac{\partial_t u}{\sqrt{1+|D u|^2}} = \partial_t X \cdot \nu = - H = \Big( \delta_{ij} - \frac{D_i u D_j u}{1+|Du|^2}\Big) \frac{D_i D_j u}{\sqrt{1+|Du|^2}},
\end{align*}
so since $D_i u (0,t_0) = D_i\tilde u(0,t_0) = 0$ for $ 1\leq i \leq k$, at the point $(0,t_0)$ we have
\begin{align*}
\partial_t u &=  \Big( \delta_{ij} - \frac{D_i u D_j u}{1+|Du|^2}\Big) D_i D_j u\\
&=  \sum_{i =1}^k D_i D_i u + \sum_{i,j = k+1}^n \Big( \delta_{ij} - \frac{D_i u D_j u}{1+|Du|^2}\Big) D_i D_j u\\
&\leq  \sum_{i =1}^k D_i D_i u \\
&= -\tilde H.
\end{align*}
Here we have made crucial use of the fact that $M_t$ is convex, hence $u$ is concave in its spatial variables. In summary we have shown
\[\partial_t \tilde X(0,t_0) \cdot \tilde \nu(0,t_0) = \partial_t \tilde u (0,t_0) = \partial_t u(0,t_0) \leq - \tilde H(0, t_0),\]
as required.
\end{proof}

Finally, we recall the following well-known ``point-selection'' trick.

\begin{lemma}\label{lem:pp}
Let $\{M_t\}_{t \in [\alpha, \omega]}$ be a complete solution of mean curvature flow. If $t\in(\alpha,\omega]$ and $p\in M_{t}$ satisfy $H(p,t) \geq r^{-1}$ and $\alpha\leq t-4r^2$, then there is a point $(\bar p,\bar t) \in  B_{2 r}(p) \times [t- 2r^2, t]$ satisfying $H(\bar p, \bar t) \geq H(p,t)$ and 
\begin{equation}\label{eq:point picked}
\sup_{B_{\bar r}(\bar p) \times [\bar t - \bar r^2,  \bar t]} H \leq 2 / \bar r\,,
\end{equation}
where $\bar r \doteqdot 1 / H(\bar p,  \bar t)$. 
\end{lemma}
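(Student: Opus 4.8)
The plan is to prove Lemma~\ref{lem:pp} by the standard iterative point-selection argument (a discrete ``parabolic'' analogue of the Schoen/Shi trick), using the fact that $H$ is finite and smooth on compact parabolic neighbourhoods of the flow. First I would set up the notation for parabolic cylinders: for a spacetime point $(q,s)$ and $\rho > 0$ write $P_\rho(q,s) \doteqdot B_\rho(q) \times [s-\rho^2, s]$. I want to produce a point $(\bar p, \bar t)$ with $H(\bar p,\bar t) \geq H(p,t)$ and with the property that $H \leq 2/\bar r$ on $P_{\bar r}(\bar p,\bar t)$, where $\bar r = 1/H(\bar p,\bar t)$, while keeping $(\bar p,\bar t)$ within the slightly larger cylinder $P_{2r}(p,t)$.

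The construction is by induction. Set $(p_0,t_0) \doteqdot (p,t)$ and $r_0 \doteqdot 1/H(p_0,t_0) \leq r$. Given $(p_j,t_j)$ with $r_j \doteqdot 1/H(p_j,t_j)$, I would check whether $\sup_{P_{r_j}(p_j,t_j)} H \leq 2/r_j$. If so, the process stops and we output $(\bar p,\bar t) = (p_j,t_j)$. If not, there is $(p_{j+1},t_{j+1}) \in P_{r_j}(p_j,t_j)$ with $H(p_{j+1},t_{j+1}) > 2/r_j = 2 H(p_j,t_j)$, so in particular $r_{j+1} < 2^{-1} r_j$, and inductively $r_j < 2^{-j} r_0$. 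Consequently the spatial displacements satisfy $|p_{j+1}-p_j| \leq r_j < 2^{-j} r_0$, so $|p_j - p_0| < \sum_{i=0}^{\infty} 2^{-i} r_0 = 2 r_0 \leq 2r$; likewise $t_j \in [t_0 - \sum_{i \geq 0} 2^{-2i} r_0^2, t_0] \subset [t_0 - 2r_0^2, t_0] \subseteq [t_0 - 2r^2, t_0]$. Hence every $(p_j,t_j)$ stays inside $P_{2r}(p,t)$, which by the hypothesis $\alpha \leq t - 4r^2$ is contained in the domain of the flow, so $H$ is defined, smooth, and finite there; in particular $\sup_{P_{r_0}(p_0,t_0)} H =: \Lambda < \infty$. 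Since $H(p_j,t_j) > 2^j H(p_0,t_0) \to \infty$, the iteration cannot continue indefinitely without contradicting $H(p_j,t_j) \leq \Lambda$; therefore it terminates after finitely many steps, yielding the desired $(\bar p,\bar t)$. Monotonicity $H(p_{j+1},t_{j+1}) > H(p_j,t_j)$ gives $H(\bar p,\bar t) \geq H(p,t)$, and the stopping criterion is exactly \eqref{eq:point picked}.

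The only genuine subtlety — really a bookkeeping point rather than an obstacle — is making sure the growing cylinders $P_{r_j}(p_j,t_j)$ never escape the region where the flow is defined; this is what the geometric-series bounds above ensure and why the hypothesis $\alpha \leq t - 4r^2$ (together with completeness of the timeslices, so that the balls $B_{2r}(p) \subset M_s$ make sense) is assumed. A second minor point is that finiteness of $\Lambda = \sup_{P_{r_0}(p_0,t_0)} H$ uses that the timeslices are complete and $H$ is continuous on the closed, but a priori noncompact, slices; since $H$ is bounded on the relevant parabolic neighbourhood by the interior estimate (Proposition~\ref{prop:interior_est}) in the convex setting, or simply by smoothness on a compact exhaustion in general, this causes no difficulty. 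Everything else is the routine summation of a geometric series, so I would not belabour it.
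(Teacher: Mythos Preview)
Your proof is correct and follows essentially the same iterative point-selection argument as the paper's proof, which is a terse version of the same scheme (attributed there to \cite{EckerBook}): double the curvature at each step, use the geometric series $\sum 2^{-k}=2$ to keep the iterates inside $B_{2r}(p)\times[t-2r^2,t]$, and terminate by finiteness of $H$ on compact subsets of spacetime. Your write-up is in fact more careful than the paper's about tracking the parabolic displacements and checking that the hypothesis $\alpha\le t-4r^2$ keeps everything inside the domain of the flow; the only superfluous remark is the appeal to Proposition~\ref{prop:interior_est}, which is specific to convex flows and unnecessary here since smoothness of the solution already gives finiteness of $H$ on the relevant compact parabolic region.
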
 
\begin{proof}
We follow \cite{EckerBook}. If $(p, t)$ satisfies \eqref{eq:point picked}, then we may take $(\bar p , \bar t)=(p,t)$. Else, we can find $(p_1 ,t_1)\in B_{r}(0) \times [t-r^2, t]$ such that $H(p_1,t_1)>2H(p,t)$. If $(p_1,t_1)$ satisfies
\[
\sup_{B_{\frac{1}{H(p_1,t_1)}}(p_1) \times [t_1-\frac{1}{H(p_1,t_1)^{2}}, t_1]} H \leq 2H(p_1,t_1)\,,
\]
then we take $(\bar p , \bar t)=(p_1,t_1)$. Since $\sum_{k=0}^\infty 2^{-k}=2$ and $H$ is finite in compact subsets of spacetime, continuing in this way we find, after some finite number of steps $k$, a point $(\bar p,\bar t)\doteqdot (p_k,t_k)$ with the desired properties.
\end{proof}

\section{Concavity of the arrival time}\label{sec:concavity}

Given a convex mean curvature flow $\{\partial \Omega_t\}_{t \in I}$, we define its arrival time $u:\cup_{t\in I} \, \Omega_t\to I$ by
\[u(x) := \sup\{t \in I: x \in \Omega_t\}.\]

Evans and Spruck \cite{ES91} (see also \cite{Trudinger90} and \cite{BLconcavity}) showed that every compact convex mean curvature flow has root-concave arrival time (i.e. the function $\sqrt{2(u-u_0)}$ is concave, where $u_0:=\inf I$). Since root-concavity of the arrival time is locally equivalent to Hamilton's differential Harnack inequality, the latter implies root-concavity of the arrival time for noncompact flows under the additional assumption of bounded curvature on compact time intervals \cite{BLconcavity}. Recent work of Daskalopoulos--Saez \cite{DaskSaez} may be used to remove this hypothesis. In particular, we find that every convex \emph{ancient} mean curvature flow has \emph{concave} arrival time (equivalently, satisfies the differential Harnack inequality).



\begin{remark}\label{rem:Wang is a Wang1}
In an earlier preprint, written before \cite{DaskSaez} became available, we were able to obtain Theorems \ref{thm:noncollapsing equivalence} and \ref{thm:Reapers} without appealing to concavity of the arrival time. However, this property can be used to simplify our arguments in several places; for the sake of clarity, we have decided to incorporate it into the present version.
\end{remark}

\begin{proposition}\label{arrival concave}
Every convex mean curvature flow $\{\partial\Omega_t\}_{t\in I}$ has root-concave arrival time. In particular, every convex ancient mean curvature flow has concave arrival time.
\end{proposition}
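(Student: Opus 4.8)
The plan is to reduce the claim to the compact case, which is the theorem of Evans and Spruck \cite{ES91}, by an exhaustion argument, and then to pass from finite to infinite time intervals by an elementary squaring trick. It suffices to treat a convex mean curvature flow $\{\partial\Omega_t\}_{t\in[t_0,T]}$ on a compact time interval and to show that the root arrival time $w\doteqdot\sqrt{2(u-t_0)}$ is concave on the convex body $\Omega_{t_0}=\bigcup_{t\in[t_0,T]}\Omega_t$; a flow on a general interval reduces to this by restricting to a slightly smaller closed subinterval, the corresponding root arrival times converging pointwise, and a pointwise limit of concave functions being concave. When $\Omega_{t_0}$ is compact this is exactly \cite{ES91}.

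For the general case I would exhaust $\Omega_{t_0}$ by an increasing sequence of smooth compact strictly convex bodies $K_j$ with $\overline{K_j}\subset K_{j+1}$ and $\bigcup_j K_j=\Omega_{t_0}$, and let $\{\partial K^j_t\}$ be the compact convex mean curvature flow with $K^j_{t_0}=K_j$. By the avoidance principle these flows are nested and contained in $\{\partial\Omega_t\}$, so the arrival times $u_j$ increase pointwise to some $\underline u\le u$ on $\Omega_{t_0}$, and each $\sqrt{2(u_j-t_0)}$ is concave by \cite{ES91}; hence so is the pointwise limit $\sqrt{2(\underline u-t_0)}$. The claim on $[t_0,T]$ therefore reduces to the equality $\underline u=u$ --- equivalently, to the assertion that the inner flows $K^j_t$ fill up $\Omega_t$ for every $t\in[t_0,T]$.

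This last point is the heart of the matter, and I expect it to be the main obstacle. It is a stability (or non-fattening) statement for the convex level-set flow: the monotone limit of the inner smooth approximations must recover the full smooth flow $\{\partial\Omega_t\}$, and this can conceivably fail when $\Omega_{t_0}$ is unbounded and of unbounded curvature, where the classical arguments do not apply. It is here that I would invoke the recent work \cite{DaskSaez}. Under the additional assumption that $\{\partial\Omega_t\}$ has curvature bounded on compact time intervals, one may instead argue through Hamilton's differential Harnack inequality --- which holds in that setting --- together with its local equivalence with root-concavity of the arrival time, established in \cite{BLconcavity}. Everything else (the avoidance principle, \cite{ES91}, and stability of concavity under pointwise limits) is routine.

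Finally, let $\{\partial\Omega_t\}_{t\in(-\infty,0]}$ be ancient and set $D\doteqdot\bigcup_{t\le 0}\Omega_t$, a convex set. Given $x,y\in D$ and $\lambda\in[0,1]$, both points lie in $\Omega_{t_0}$ once $-t_0$ is large enough, so applying the compact-interval case to $\{\partial\Omega_t\}_{t\in[t_0,0]}$ gives
\[\sqrt{2\bigl(u(\lambda x+(1-\lambda)y)-t_0\bigr)}\ \ge\ \lambda\sqrt{2(u(x)-t_0)}+(1-\lambda)\sqrt{2(u(y)-t_0)}.\]
Squaring both (nonnegative) sides and expanding as $t_0\to-\infty$, using $\sqrt{(2u(x)-2t_0)(2u(y)-2t_0)}=-2t_0+u(x)+u(y)+O(|t_0|^{-1})$, the $t_0$-dependent terms cancel and one is left with $u(\lambda x+(1-\lambda)y)\ge\lambda u(x)+(1-\lambda)u(y)$, so $u$ is concave on $D$.
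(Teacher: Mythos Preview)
Your strategy---exhaust $\Omega_{t_0}$ by compact convex bodies, apply Evans--Spruck to each, identify the limit of the approximating flows with the original via the Daskalopoulos--Saez uniqueness theorem (combined with interior curvature estimates to get smooth subsequential convergence), and then send $t_0\to-\infty$ by the squaring argument---is essentially the paper's own proof. One small slip: the compact flows $\{\partial K^j_t\}$ need not terminate by time $T$, so $u_j$ can exceed $u$ on $\Omega_T$ and your claim $u_j\le u$ is not quite right; the fix (which the paper uses) is to replace $\sqrt{2(u_j-t_0)}$ by $\min\bigl\{\sqrt{2(u_j-t_0)},\sqrt{2(T-t_0)}\bigr\}$, which is still concave and does converge to $\sqrt{2(u-t_0)}$.
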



The key to proving Proposition \ref{arrival concave} is to show that every noncompact convex flow admits a smooth approximation by compact flows. Wang argued that this is the case in \cite[Proposition 4.1]{Wa11}, but his proof implicitly assumes that solutions of mean curvature flow out of convex initial data are unique. Until recently, this was only known under restrictive additional hypotheses, such as an upper bound for the second fundamental form. The work of Daskalopoulos--Saez \cite{DaskSaez} improves the situation substantially, establishing uniqueness for graphical convex initial data under extremely mild conditions, and in particular in the absence of any upper bound for curvature. Using their results, Daskalopoulos--Saez proved that convex flows which can be written as entire graphs admit a smooth approximation by compact flows, and thus proved that the differential Harnack inequality holds in this setting \cite[Section 5]{DaskSaez}. We will see that a straightforward modification of their argument (using Proposition~\ref{prop:interior_est} rather than the Ecker--Huisken curvature estimate for graphs over hyperplanes) yields the general case. We require the following uniqueness statement.

\begin{lemma}\label{lem:convex uniqueness}
Let $\{\partial \Omega_t\}_{t \in [0,T]}$ be a smooth, noncompact, convex, locally uniformly convex mean curvature flow. If $t_0>0$ and $\{\partial \tilde \Omega_t\}_{t \in [t_0, T]}$ is a smooth convex mean curvature flow such that $\tilde \Omega_{t_0} = \Omega_{t_0}$, then $\partial \tilde \Omega_t = \partial \Omega_t$ for $t \in [t_0,T]$. 
\end{lemma}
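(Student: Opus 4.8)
The plan is to adapt the uniqueness argument of Daskalopoulos--Saez \cite{DaskSaez}, who proved the analogous statement for convex flows that are entire graphs over a fixed hyperplane, replacing their appeal to the Ecker--Huisken interior curvature estimate for graphs (which needs a global gradient bound, hence an entire-graph hypothesis) with Proposition \ref{prop:interior_est}, whose proof is purely local and applies to any convex solution once a ball is trapped inside. First I would reduce to a graphical problem. Since $\{\partial\Omega_t\}$ is locally uniformly convex, the common timeslice $\partial\Omega_{t_0}=\partial\tilde\Omega_{t_0}$ is smooth, noncompact and strictly convex, so its (nontrivial) recession directions are, by strict convexity, strictly transverse to all of its tangent hyperplanes; hence it is the graph of a spatially concave function over an open convex domain $D$ in some hyperplane $\Pi\subset\R^{n+1}$, with $\Omega_{t_0}$ the region below it. Strict convexity is preserved for a short time and both flows shrink ($\Omega_t\subset\Omega_{t_0}$ and $\tilde\Omega_t\subset\tilde\Omega_{t_0}$ for $t\ge t_0$), so on $D\times[t_0,t_0+\tau]$, for some $\tau>0$, both are graphs of spatially concave functions $u$ and $\tilde u$ solving
\[
\partial_t u=\sqrt{1+|Du|^2}\,\dvg\!\left(\frac{Du}{\sqrt{1+|Du|^2}}\right),
\]
with $u(\cdot,t_0)=\tilde u(\cdot,t_0)$ on $D$.

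The crux is an interior curvature estimate for \emph{both} flows, since the competitor $\{\partial\tilde\Omega_t\}$ is a priori only assumed smooth. Applying Proposition \ref{prop:interior_est} to each of the convex flows $\{\partial\Omega_t\}_{t\in[t_0,T]}$ and $\{\partial\tilde\Omega_t\}_{t\in[t_0,T]}$ — using that $\Omega_{t_0}=\tilde\Omega_{t_0}$, being a convex body, contains a Euclidean ball, and exhausting $(t_0,T]$ by the time-intervals on which \eqref{eq:int_2} yields a bound — one obtains, on every compact subset of $\R^{n+1}\times(t_0,T]$, a uniform bound on $H$ and $\tilde H$; for the given flow the bound extends down to $t_0$ because it is defined on $[0,T]$ with $t_0>0$ and is locally uniformly convex (apply \eqref{eq:int_2} on $[0,t_0]$). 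The Ecker--Huisken estimates \cite{EckerHuisken91} then bound all derivatives of curvature locally, which translates into uniform $C^\infty_{\loc}$ bounds for $u$ and $\tilde u$ on $D\times[t_0,t_0+\tau]$.

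Given these bounds, the difference $w:=u-\tilde u$ satisfies a locally uniformly parabolic linear equation on $D\times[t_0,t_0+\tau]$ with $w(\cdot,t_0)=0$, and one concludes $w\equiv 0$ by the localized maximum-principle argument of \cite{DaskSaez}: a cutoff adapted to $D$, together with the spatial concavity of $u$ and $\tilde u$ (which controls their gradients on compact subsets of $D$ and pins the graphs down near $\partial D$, since they issue from the same initial datum and only shrink), absorbs the error terms. Thus $\partial\tilde\Omega_t=\partial\Omega_t$ on an interval past $t_0$; since the set of $t\in[t_0,T]$ at which the two flows coincide is closed by continuity and, by rerunning the above from any coincidence time, open, it is all of $[t_0,T]$.

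The step I expect to be the main obstacle is precisely the passage beyond the entire-graph case: controlling the competitor near spatial infinity — equivalently, near $\partial D$, where $u$ and $\tilde u$ blow up — without any global curvature bound. This is exactly what Proposition \ref{prop:interior_est} is brought in to handle (the modification of \cite{DaskSaez} referred to in the text): it supplies the interior curvature control that, in the entire-graph case, came from the Ecker--Huisken graph estimate, and it does so for convex flows over proper domains as well. Verifying that the cutoff/absorption in the localized maximum principle is genuinely uniform as the compact sets exhaust $D$ — i.e. that the boundary contributions really do vanish — is the delicate point.
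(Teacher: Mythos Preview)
Your reduction to a graphical problem is fine, and so is the instinct to replace Ecker--Huisken's graph estimate by Proposition~\ref{prop:interior_est}. But there is a genuine gap: you treat the spatial domain $D$ as fixed, and in fact it is not. Because the convex regions shrink, each $\partial\Omega_t$ (respectively $\partial\tilde\Omega_t$) is a graph only over some $D_t\subset D_{t_0}$ (respectively $\tilde D_t\subset D_{t_0}$), and in general $D_t\subsetneq D_{t_0}$. Worse, there is no a~priori reason why $D_t=\tilde D_t$ for $t>t_0$, so your difference $w=u-\tilde u$ is not even defined on a common domain, and the ``cutoff adapted to $D$'' with ``boundary contributions that vanish'' is exactly the step that fails: near $\partial D_t$ (which lies in the interior of $D_{t_0}$) the height $u$ blows up, and there is nothing pinning $\tilde u$ there unless you already know $\tilde D_t=D_t$.

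This is precisely the obstacle the paper's proof is designed to overcome, and it does so by a completely different mechanism. The paper first shows (via the avoidance principle) that the asymptotic cone is time-independent, then --- and this is where Proposition~\ref{prop:interior_est} actually enters --- shifts the solutions off to infinity in the graphical direction and takes a limit to prove that the boundaries $\partial D_t$ and $\partial\tilde D_t$ are themselves smooth $(n-1)$-dimensional mean curvature flows in the base hyperplane, agreeing at $t_0$. Uniqueness of these lower-dimensional flows is then established by \emph{induction on~$n$}, which forces $D_t=\tilde D_t$. Only then does \cite[Theorem~1.4]{DaskSaez} (the non-entire case of Daskalopoulos--Saez, which requires the two solutions to be graphs over the \emph{same} evolving domain with height bounded below) close the argument. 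Your proposal contains no analogue of this dimension-reduction step, and without it the localized maximum principle cannot be made to work near the moving boundary of the graph domain.
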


In the following argument we write $\mathcal T_\infty \Omega$ for the asymptotic cone of an open convex set $\Omega$, which is defined as the Hausdorff limit of $k^{-1} \overline \Omega$ as $k \to \infty$, or alternatively the largest closed convex cone with vertex at the origin which is contained in a translate of $\Omega$.

\begin{proof}[Proof of Lemma \ref{lem:convex uniqueness}]
The desired uniqueness follows from results in \cite{DaskSaez}, provided the two solutions can be written as graphs over the same (evolving) domain, and in such a way that their height functions are each bounded from below. 

A straightforward argument using the avoidance principle shows that $\mathcal T_\infty \Omega_t$ is independent of $t$. Indeed, if we shift $\Omega_T$ so that $B_r(0) \subset \Omega_T$ for some $r >0$, then $0 \in \Omega_0$, and hence $\mathcal T_\infty \Omega_0 \subset \Omega_0$. Define 
\[\bar t := \sup\{t \in [0,T] : \mathcal T_\infty \Omega_0 \subset \overline \Omega_t\}.\]
We then have $\mathcal T_\infty \Omega_0 \subset \overline \Omega_{\bar t}$, but since $B_r(0) \subset \Omega_{\bar t}$, convexity lets us conclude that $B_r(v) \subset \Omega_0$ for all $v \in \mathcal T_\infty \Omega_0$. Therefore, if $\bar t < T$ then the avoidance principle implies that $\mathcal T_\infty \Omega_0 \in \overline \Omega_t$ for all $t \in [0, \min\{\bar t + r^2/2n, T\}]$, which contradicts the definition of $\bar t$, so we have $\bar t = T$. We thus conclude that $\mathcal T_\infty \Omega_0 \subset \mathcal T_\infty \Omega_t$ for each $t \in [0,T]$. On the other hand $\Omega_t \subset \Omega_0$ implies $\mathcal T_\infty \Omega_t \subset \mathcal T_\infty \Omega_0$ for each $t \in [0,T]$, so $\mathcal T_\infty \Omega_t = \mathcal T_\infty \Omega_0$ for each $t \in [0,T]$. Therefore, let us write $K$ for $\mathcal T_\infty \Omega_t$. 

Since $\Omega_t$ is locally uniformly convex, $K$ does not contain any lines, so up to a rotation we may assume $\mathbb{R}_+ e_{n+1} \subset K$, $K\setminus \{0\} \subset \{x_{n+1} >0\}$, and that $K \cap \{x_{n+1} = h\}$ is compact for each $h \geq 0$. In particular, these properties imply that $\partial \Omega_t$ is a graph over an open subset $D_t \subset \{x_{n+1} = 0\}$ for each $t \in [0,T]$, and since $\overline \Omega_0 \cap \{x_{n+1} \leq 0\}$ is compact, the height function is bounded from below independently of $t$. Similarly, since $\mathcal T_\infty \tilde \Omega_{t}$ is constant and equal to $\mathcal T_\infty \Omega_{t_0} = K$, $\partial \tilde \Omega_t$ is a graph over an open subset $\tilde D_t \subset \{x_{n+1} = 0\}$ with height bounded from below independently of $t \in [t_0, T]$. We have $D_{t_0} = \tilde D_{t_0}$.
 
In case $D_0 = \mathbb{R}^n$, the avoidance principle implies $D_t = \mathbb{R}^n$ for all $t \in [0,T]$, in which case the claim follows immediately from \cite[Theorem 1.3]{DaskSaez}. Suppose then that $D_0 \not = \mathbb{R}^n$.

We now prove the claim by induction on the dimension $n$, so suppose first that $n=1$. Observe that $\mathbb{R} \times \partial \Omega_t$ is a convex mean curvature flow in $\mathbb{R}^3$, which is a graph over $\mathbb{R} \times D_t$. We label axes so that $e_3$ is the graphical direction, the extra $\mathbb{R}$-factor corresponds to the $e_1$-axis, and $D_t$ lies on the $e_2$-axis. Let $B$ be an open ball in $\mathbb{R}\times\Omega_T$. We then have that 
\[B \subset \mathbb{R}\times  \Omega_t - k e_3 \]
for all $t \in [0, T]$ and $k \in \mathbb{N}$, and since $D_{t_0} \not = \mathbb{R}$, the minimum distance from $M_{t}^k\doteqdot \mathbb{R}\times  \partial\Omega_t - k e_3$ to the origin is bounded from above independently of $k$. Therefore, by the interior estimate \eqref{eq:int_2} of Proposition \ref{prop:interior_est} and the Ecker--Huisken interior derivative estimates \cite{EckerHuisken91}, the sequence of flows $M_t^k$ subconverges in $C^\infty_{\loc}(\mathbb{R}^3 \times (0,T])$ as $k \to \infty$. Moreover, the limit is $\mathbb{R} \times \partial D_t \times \mathbb{R}$. In particular, $\{\mathbb{R}\times\partial D_t\}_{t\in [{t_0}, T]}$ is a smooth mean curvature flow in $\mathbb{R}^2$. 

We now give a similar argument showing that $\{\mathbb{R} \times \partial \tilde D_t\}_{t\in [{t_0}, T]}$ is a smooth mean curvature flow in $\mathbb{R}^2$; to prove this for the closed interval $[t_0,T]$, we will exploit the fact that $t_0$ is an interior time for $\Omega_t$. Estimate \eqref{eq:int_1} of Proposition \ref{prop:interior_est} implies that the hypersurfaces $\tilde M_t^k \doteqdot \mathbb{R} \times \partial \tilde \Omega_t - k e_3$ have mean curvature bounded from above independently of $k$ on compact subsets of $\mathbb{R}^{3} \times [t_0, T]$, since there is an open ball $\tilde B$ contained in $\mathbb{R} \times \tilde \Omega_{T} - k e_3$ for all $k$, and the mean curvature of $\tilde M_{t_0}^k = M_{t_0}^k$ is bounded independently of $k$ on compact subsets of $\mathbb{R}^{3}$. Higher derivative estimates for the second fundamental form then follow from the fact that $M_{t_0}^k$ satisfies such estimates indpendently of $k$ on compact subsets, and the Ecker--Huisken estimates. The desired convergence thus follows from the Arz\'{e}la--Ascoli theorem.

Since $D_{t_0}$ is either an interval or a half-line, $\mathbb{R} \times D_{t_0}$ is either a slab or a halfspace, both of which remain stationary under mean curvature flow by the avoidance principle, so we have $\mathbb{R} \times D_t = \mathbb{R} \times \tilde D_t = \mathbb{R} \times D_{t_0}$, and in particular $D_t = \tilde D_t = D_{t_0}$, for $t \in [{t_0}, T]$. Therefore, \cite[Theorem 1.4]{DaskSaez} implies that $\partial \Omega_t = \partial \tilde \Omega_t$ for $t \in [{t_0}, T]$, as required. 

Now suppose $n \geq 2$, and that the claim has been established for flows of dimension at most $n-1$. Arguing as above we find that the shifted flows $\partial \Omega_t - k e_{n+1}$ subconverge in $C^\infty_{\loc}(\mathbb{R}^{n+1} \times (0, T])$ to $\partial D_t \times \mathbb{R}$ as $k \to \infty$, so $\partial D_t$ is a smooth mean curvature flow of dimension $n-1$ in $\mathbb{R}^n$. By the splitting theorem, we may assume $D_t = \mathbb{R}^m \times D_t^\perp$, where $\partial D_t^\perp$ is a smooth mean curvature flow in $\mathbb{R}^{n-m}$ which is locally uniformly convex. Similarly, $\{\partial \tilde D_t\}_{t \in [{t_0}, T]}$ is a smooth mean curvature flow in $\mathbb{R}^n$, and we have $\tilde D_{t_0} = \mathbb{R}^m \times D_{t_0}^\perp$, so by the avoidance principle $\tilde D_{t}$ splits as $\mathbb{R}^m \times \tilde D_{t}^\perp$ for $t \in [{t_0}, T]$. Applying the inductive hypothesis we conclude $D_t^\perp = \tilde D_t^\perp$ for $t \in [{t_0}, T]$, and hence $D_t = \tilde D_t$ for $t \in [{t_0}, T]$. In this case \cite[Theorem 1.4]{DaskSaez} implies $\partial \Omega_t = \partial \tilde \Omega_t$ for $t \in [{t_0}, T]$.   
\end{proof}

\begin{proof}[Proof of Proposition \ref{arrival concave}]
Let $\{\partial \Omega_t\}_{t \in (-T,0]}$ be a convex mean curvature flow. If $H$ vanishes anywhere, then $\Omega_t$ is for each $t$ a stationary slab or halfspace (which has constant arrival time) so we may assume that $H>0$. If $\partial \Omega_t$ is compact, then the claim follows from 
\cite[Theorem 7.6]{ES91}, so suppose that $\partial \Omega_t$ is noncompact. We may assume without loss of generality that $A>0$, for otherwise the solution splits and we can apply the following argument to the lower dimensional factor with $A>0$. 

Fix a time $t_0 \in(-T,0)$. Let $\Omega_{t_0}^k$ be a sequence of smooth bounded convex open sets such that $\Omega_{t_0}^k \to \Omega_{t_0}$ in $C^\infty_{\loc}(\mathbb{R}^{n+1})$. For each $k$, let $\{\partial \Omega_t^k\}_{t\in[t_0, T_k)}$ be the maximal convex mean curvature flow starting from $\partial \Omega_{t_0}^k$. Let $B_{4r}$ be an open ball in $\Omega_0$. Since $\Omega_0 \subset \Omega_{t_0}$, we have $B_{4r} \subset \Omega_{t_0}$, so after passing to a subsequence we may assume $B_{2r} \subset \Omega_{t_0}^k$ for all $k$. Let 
\[T_k' \doteqdot \sup\{t \in [t_0, T_k) : B_r \subset \Omega_t^k\}\]
and let $T' \doteqdot \liminf_{k\to \infty} T_k'$. The avoidance principle ensures that $T' > t_0$. By Proposition \ref{prop:interior_est} and the Ecker--Huisken derivative estimates, after passing to a subsequence,  $\{\partial \Omega_t^k\}_{t \in [t_0, T']}$ converges in $C^\infty_{\loc}(\mathbb{R}^{n+1}\times [t_0, T'])$ to a convex mean curvature flow $\{\partial \tilde \Omega_t\}_{t \in [t_0, T']}$ with $\partial \tilde \Omega_{t_0} = \partial \Omega_{t_0}$. Applying Lemma \ref{lem:convex uniqueness}, we see that $\tilde \Omega_t = \Omega_t$ for $t \in [t_0, \min\{0,T'\}]$. If $T' \leq 0$ then $\tilde \Omega_{T'} = \Omega_{T'}$ contains $B_{4r}$, so $\Omega_T^k$ contains $B_{2r}$ for large $k$, but this contradicts the definition of $T'$, so $T' >0$.

Let $u_k : \Omega_{t_0}^k \to \mathbb{R}$ denote the arrival time of $\{\partial \Omega_t^k\}_{t \in [t_0, T_k)}$. By \cite[Theorem 2.1]{BLconcavity}, the function $\sqrt{2(u_k-t_0)}$ is concave in $\Omega_{t_0}$, and hence so too is the function $w_k :=  \min\{\sqrt{2(u_k-t_0)},\sqrt{-2t_0}\}$. Writing $u$ for the arrival time of $\{\partial \Omega_t\}_{t \in (-T,0]}$, we have that $w_k \to \sqrt{2(u -t_0)}$ in $C^0_{\loc}(\Omega_{t_0})$, and hence $\sqrt{2(u -t_0)}$ is concave in $\Omega_{t_0}$. Since $t_0 \in (-T,0)$ was arbitrary, in case $T <\infty$ we conclude that $\sqrt{2(u+T)}$ is concave. In case $T = \infty$, we conclude that $\sqrt{2(u - t_0)}$ is concave for every $t_0<0$, and from this it follows that $u$ is concave. 
\end{proof}

As a corollary, we may remove the boundedness of curvature in the hypotheses of Hamilton's differential Harnack inequality.

\begin{corollary}\label{cor:harnack}
If $\{\partial \Omega_t\}_{t\in(-T,0]}$ is a convex mean curvature flow, then
\[
\partial_t H + 2 \langle \nabla H, v \rangle + A(v, v)+\frac{H}{2(t+T)} \geq 0
\]
for every $v$ tangent to $\partial \Omega_t$.
\end{corollary}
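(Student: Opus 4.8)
The plan is to deduce this directly from Proposition~\ref{arrival concave}, exploiting the pointwise (and purely local) equivalence between root-concavity of the arrival time and Hamilton's differential Harnack inequality. First one disposes of the degenerate case: if $H$ vanishes somewhere in spacetime then, as already noted, each $\Omega_t$ is a stationary slab or halfspace, so $\partial_t H$, $\nabla H$ and $A$ vanish identically and the inequality is trivial. So assume $H>0$. Since the flow is then smooth with strictly positive normal speed, the implicit function theorem guarantees that the arrival time $u$ is smooth near each $\partial\Omega_t$, with $\nabla u\neq 0$ there; note also $u\equiv t$ on $\partial\Omega_t$, so that $u+T=t+T$ along that timeslice.

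The next step is to set up the dictionary between the ambient derivatives of $u$ at a point $x\in\partial\Omega_t$ and the geometry of the flow. The level-set description of mean curvature flow gives $\nabla u=-H^{-1}\nu$, with $\nu$ the outward unit normal, so that $|\nabla u|=H^{-1}$. Differentiating this identity in directions tangent to $\partial\Omega_t$ yields, in an orthonormal frame $\{e_1,\dots,e_n,\nu\}$,
\[
(\nabla^2 u)(e_i,e_j)=-H^{-1}A_{ij},\qquad (\nabla^2 u)(e_i,\nu)=H^{-2}\nabla_i H,
\]
while differentiating $\nabla u=-H^{-1}\nu$ along the mean curvature trajectory through $x$ --- which is exactly the reparametrised gradient flow line $\dot y=\nabla u/|\nabla u|^2$ of $u$ --- and using $\partial_t\nu=\nabla H$ gives, after comparing normal components,
\[
(\nabla^2 u)(\nu,\nu)=-H^{-3}\partial_t H.
\]

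It remains to translate Proposition~\ref{arrival concave}. Setting $\varphi\doteqdot\sqrt{2(u+T)}$, one has $\nabla^2 u=\varphi\,\nabla^2\varphi+\nabla\varphi\otimes\nabla\varphi$, so concavity of $\varphi$ is equivalent to the estimate
\[
\nabla^2 u\le\frac{\nabla u\otimes\nabla u}{2(u+T)}=\frac{H^{-2}}{2(u+T)}\,\nu\otimes\nu
\]
wherever $u$ is smooth (in the ancient case $T=\infty$ one uses concavity of $u$ itself, and the term $\tfrac{H}{2(t+T)}$ is simply absent). Testing this inequality at a point of $\partial\Omega_t$ against the vector $w=-H^{-1}v+\nu$, where $v$ is tangent to $\partial\Omega_t$, substituting the three formulae above (and $u+T=t+T$), and multiplying through by $-H^{3}<0$, produces exactly
\[
\partial_t H+2\langle\nabla H,v\rangle+A(v,v)+\frac{H}{2(t+T)}\ge 0.
\]

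The genuine difficulty --- establishing root-concavity of $u$ for an arbitrary convex flow, which requires the Daskalopoulos--Saez uniqueness and the compact-approximation scheme --- has already been absorbed into Proposition~\ref{arrival concave}; what is left here is a short computation, whose only delicate points are the sign bookkeeping in the last display and the minor care needed near the critical set of $u$ and when $T=\infty$. Alternatively, one may simply cite the equivalence of root-concavity and the differential Harnack inequality recorded in \cite{BLconcavity}.
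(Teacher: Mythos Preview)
Your proposal is correct and follows essentially the same approach as the paper: the paper does not give a detailed argument but simply records the corollary as an immediate consequence of Proposition~\ref{arrival concave}, invoking the pointwise equivalence between root-concavity of the arrival time and Hamilton's differential Harnack inequality (as recorded in \cite{BLconcavity}). You have spelled out that equivalence explicitly, and the computation checks out (including the sign bookkeeping in the final step).
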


Note that Corollary \ref{cor:harnack} can also be proven directly using an approximation by compact solutions like that in the proof of Proposition \ref{arrival concave}. This was the approach taken in \cite{DaskSaez}, which established Corollary \ref{cor:harnack} for convex entire graphs. 

\section{Sequential compactness of convex ancient solutions}\label{sec:compactness}

We now prove Theorem \ref{thm:entire_compact}. Our main tool is the following ``slab estimate''. This was proven by Wang \cite[Theorem~1.3]{Wa11} for solutions with concave arrival time, and thus holds for all convex ancient flows by Proposition \ref{arrival concave}. 

\begin{proposition}[Slab estimate]
\label{prop:slab}
There exist $\beta=\beta(n)>0$ and $R=R(n)<\infty$ with the following property. Let $\{\partial \Omega_t\}_{t\in(-\infty,0]}$ be a convex ancient mean curvature flow in $\mathbb{R}^{n+1}$ such that $0 \in \Omega_0$. If $\Omega_{-1} \cap B_R(0) \subset \{|x_{n+1}| \leq \beta\}$, then there is a fixed slab in $\mathbb{R}^{n+1}$ which contains $\Omega_t$ for all $t \leq 0$.
\end{proposition}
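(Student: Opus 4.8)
\emph{Reductions.} First I would dispose of the degenerate cases. If $H$ vanishes somewhere, then $H\equiv 0$ by the strong maximum principle and each $\partial\Omega_t$ is a stationary hyperplane or a pair of parallel hyperplanes; the slab case already has the desired form, while the half-space case violates the hypothesis (taking $x=\pm Re_{n+1}$ produces a point of $\Omega_{-1}\cap B_R(0)$ with $|x_{n+1}|=R>\beta$). If $A$ is somewhere degenerate, Hamilton's strong maximum principle splits the flow as $\R^m\times\{\partial\Omega'_t\}$ with $\Omega'_t$ strictly convex; if $\R^m$ is not orthogonal to $e_{n+1}$ then $x\cdot e_{n+1}$ is unbounded on $\Omega_t$ and the hypothesis fails, and otherwise we project onto the factor and induct on the dimension, the cases $n\le 1$ following from the classification of convex ancient curve shortening flows. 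So assume $A>0$.

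\emph{A monotone width.} Let $h^{\pm}(t)$ denote the supremum and infimum of $x\cdot e_{n+1}$ over $\Omega_t$, and put $w(t)=h^+(t)-h^-(t)\in(0,+\infty]$. Since $\{x\cdot e_{n+1}=h^{\pm}(t_0)\}$ is a stationary hyperplane, the avoidance principle shows $h^+$ is non-increasing and $h^-$ non-decreasing, so $w$ is non-increasing; hence $\{\partial\Omega_t\}_{t\le0}$ lies in a fixed slab if and only if $\lim_{t\to-\infty}w(t)<\infty$. Separately, restricting the arrival time $u$ (concave by Proposition~\ref{arrival concave}) to the line $\R e_{n+1}$ and combining the hypothesis with convexity of $\Omega_{-1}$ gives $\R e_{n+1}\cap\Omega_{-1}\subset(-\beta,\beta)$, hence $u(se_{n+1})\le -|s|/\beta$ for $|s|\ge\beta$: the swept region $\bigcup_{t\le0}\Omega_t$ is pinched about the origin in the $e_{n+1}$-direction.

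\emph{Main argument.} I would then argue by contradiction and compactness. If the proposition fails, then for each $k$ there is a convex ancient flow $\{\partial\Omega_t^k\}$ with $0\in\Omega_0^k$, $\Omega_{-1}^k\cap B_k(0)\subset\{|x_{n+1}|\le 1/k\}$, and $w^k(t)\to\infty$ as $t\to-\infty$. After a suitable normalization (rescaling, and a translation preserving the $e_{n+1}$-direction, so that a ball of controlled radius sits inside some time slice), the interior estimate of Proposition~\ref{prop:interior_est} and the compactness of Proposition~\ref{prop:compactness} extract a smooth limiting convex ancient flow. The pinching forces the limit to be degenerate --- contained in a hyperplane, hence a stationary hyperplane, a slab, or a lower-dimensional convex ancient flow lying in a hyperplane --- while the property $w^k(t)\to\infty$ must be shown to pass to a genuinely non-degenerate feature of the limit, which is the contradiction.

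\emph{The crux.} The heart of the matter --- and the step I expect to be hardest --- is precisely this: controlling the \emph{past} of the flow, i.e.\ ruling out that $\bigcup_{t\le0}\Omega_t^k$ is a ``trumpet'' that is convex but neither entire nor slab-bounded. The cleanest resolution uses Huisken's monotonicity formula: the blow-down of a convex ancient flow is a convex self-shrinker (possibly with multiplicity), hence up to a rotation a shrinking sphere or generalised cylinder $\R^j\times S^{n-j}_{\sqrt{-2(n-j)t}}$, or a multiplicity-two hyperplane. In the first case $\Omega_t^k$ contains a ball of radius comparable to $\sqrt{-t}$ for $t$ very negative, and flowing such a ball forward to time $-1$ with the avoidance principle yields a definite ball inside $\Omega_{-1}^k$ near the axis $\R e_{n+1}$, contradicting the pinching once $\beta$ is small and $R$ is large. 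The borderline multiplicity-two case --- asymptotically thin yet a priori not slab-bounded --- has to be handled directly (without a circular appeal to Theorem~\ref{thm:Reapers}); here I would exploit the planar slices $\Omega_t^k\cap\Sigma$ with $\Sigma\supset\R e_{n+1}$, which are curve shortening subsolutions by Lemma~\ref{lem:slice_subsols}, comparing them against Grim-reaper and paperclip barriers of width $O(\beta)$. Making the normalization uniform in $k$ is the remaining technical point.
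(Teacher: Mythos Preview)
Your proposal has a genuine gap at exactly the point you flag as ``the crux.'' The blow-down strategy is circular in this paper's logical structure: the identification of the blow-down (as a self-shrinker, and the dichotomy entire vs.\ multiplicity-two hyperplane) is established \emph{downstream} of Proposition~\ref{prop:slab}. Concretely, the paraboloid estimate (Proposition~\ref{prop:paraboloid}), which is what would let you pass from ``blow-down is a cylinder'' to ``$\Omega_{-1}$ contains a definite ball near the origin,'' is proved by invoking Proposition~\ref{prop:slab}; and the equivalence ``blow-down is a multiplicity-two plane $\Leftrightarrow$ slab-bounded'' is part of Theorem~\ref{thm:Reapers}, which you yourself note cannot be used. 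Even setting circularity aside, your treatment of the multiplicity-two case (``compare planar slices against Grim-reaper and paperclip barriers of width $O(\beta)$'') does not work as stated: the hypothesis only controls $\Omega_{-1}$ \emph{near the origin}, so for very negative $t$ the slice $\Omega_t\cap\Sigma$ may have large width in the $e_{n+1}$-direction, and there is no Grim reaper of width $O(\beta)$ that encloses it. Finally, applying Huisken monotonicity to extract a self-shrinking blow-down presupposes a bound on entropy or area ratios, which is not available for an arbitrary convex ancient flow at this stage.

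For comparison, the paper gives two proofs, neither of which proceeds by blow-down. In the body, Proposition~\ref{prop:slab} is obtained by combining Wang's original argument (which assumed concave arrival time) with Proposition~\ref{arrival concave}, which removes that hypothesis via an approximation by compact flows and the Daskalopoulos--Saez uniqueness theorem. In the appendix, an alternative proof avoids concavity of the arrival time entirely: one writes $M_t$ as a bigraph $g_\pm(\cdot,\tau)$ over $\{x_{n+1}=0\}$, proves horizontal and vertical displacement estimates (Lemmas~\ref{lem:width}--\ref{lem:height}) using slice-subsolution and pancake-barrier arguments, and then runs Wang's iteration to show $g(0,2^{k})-g(0,2^{k-1})\le 2^{-\gamma k}$ for all $k$, which bounds the width at the origin for all $t$ and hence (via Proposition~\ref{prop:interior_est} and the strong maximum principle) forces the backward limit to be a slab. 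The barrier/slice ideas you mention do appear, but as ingredients in quantitative displacement lemmas feeding an induction, not as a one-shot comparison.
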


\begin{remark}\label{rem:Wang is a Wang2}
As alluded to in Remark \ref{rem:Wang is a Wang1}, in the initial version of this paper we were able to prove Proposition \ref{prop:slab} without exploiting concavity of the arrival time. Since this result is so crucial to our analysis, and our initial proof contains ideas\footnote{For example, Lemmas \ref{lem:width}, \ref{lem:width_2} and \ref{lem:height} are novel and provide very fine control on convex solutions contained in slab regions.} that may be useful in other contexts, we have included it as an appendix.
\end{remark}

The slab estimate implies the following ``paraboloid estimate'' \cite[Theorem~2.2]{Wa11} (we give a short proof for the convenience of the reader). In conjunction with the interior curvature estimate (Proposition \ref{prop:interior_est}), this will allow us to deduce compactness properties for sequences of convex ancient flows. 

\begin{proposition}[Paraboloid estimate]
\label{prop:paraboloid}
There exists $\eta=\eta(n)>0$ with the following property. Let $\{M_t = \partial \Omega_t\}_{t \in (-\infty,0]}$ be an entire convex ancient mean curvature flow. Given $p_0 \in M_{t_0}$,  
\[B_{\eta \sqrt{t_0-t}}(p_0) \subset \Omega_t \]
for all  $t \leq t_0- H(p_0,t_0)^{-2}$. 
\end{proposition}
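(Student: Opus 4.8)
The plan is to prove the estimate by contradiction and a blow‑up, with the slab estimate supplying the crucial non‑degeneracy. By the parabolic scaling and translation invariance of the assertion, it suffices to find $\eta=\eta(n)>0$ such that every entire convex ancient flow $\{\partial\Omega_t\}_{t\le 0}$ with $0\in M_0$ and $H(0,0)\ge 1$ satisfies $B_\eta(0)\subset\Omega_{-1}$: the general statement is recovered by applying this to the rescaled flow $\tau\mapsto (t_0-t)^{-1/2}\bigl(\Omega_{t_0+(t_0-t)\tau}-p_0\bigr)$, which is again entire, convex and ancient, has $0$ on its time‑$0$ slice, and has mean curvature $(t_0-t)^{1/2}H(p_0,t_0)\ge 1$ at the origin at time $0$ precisely because $t\le t_0-H(p_0,t_0)^{-2}$; its conclusion at time $-1$ is exactly $B_{\eta\sqrt{t_0-t}}(p_0)\subset\Omega_t$.

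Suppose the normalised statement fails, so that there is a sequence of entire convex ancient flows $\{\partial\Omega^j_t\}_{t\le 0}$ with $0\in M^j_0$, $H^j(0,0)\ge 1$ and $d_j:=\dist(0,\partial\Omega^j_{-1})\to 0$. The first observation is that, being entire, these flows cannot be thin near the origin. Indeed, applying the contrapositive of the slab estimate (Proposition \ref{prop:slab}) about an interior point of $\Omega^j_0$, after rotating coordinates so that the slab normal is an arbitrary unit vector, shows that $\Omega^j_{-1}$ is contained in no slab of width $\beta(n)$; by an elementary fact about convex bodies (Steinhagen's inequality), $\Omega^j_{-1}$ therefore contains a ball of radius $\rho(n)>0$. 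Running the same argument on parabolic rescalings, the inscribed radius of $\Omega^j_t$ is at least $\rho(n)\sqrt{-t}$ for every $t<0$.

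Next I would blow up. Applying the point‑selection Lemma \ref{lem:pp} at $(0,0)$ and rescaling at the selected scale produces flows with mean curvature bounded on a fixed parabolic neighbourhood of the new origin — the bound coming from the interior estimate (Proposition \ref{prop:interior_est}) together with the non‑degeneracy just established — so that Proposition \ref{prop:compactness} yields, after passing to a subsequence, a limiting convex ancient flow $\{\partial\Omega^\infty_t\}$. Because the normalisation $H^j(0,0)\ge 1$ controls the point‑selection scale, the rescaled images of the base point survive in the limit, giving a point $P$ and times $s_1<s_2$ with $P\in M^\infty_{s_1}\cap M^\infty_{s_2}$ and $H^\infty(P,s_2)>0$ (the latter since $H$ equals $1$ at the base point at time $0$ in the rescaled picture). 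This is impossible: a point on $M^\infty_{s_2}$ with positive mean curvature there lies in the interior of $\Omega^\infty_s$ for all $s<s_2$, hence — by the nesting $\Omega^\infty_{s_1}\supset\Omega^\infty_s$ — in the interior of $\Omega^\infty_{s_1}$, contradicting $P\in\partial\Omega^\infty_{s_1}$.

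The main obstacle is the compactness step. One must set up the rescaling and spacetime centring so that the blow‑up neither degenerates — collapsing to a hyperplane or lower‑dimensional limit, which the slab estimate precisely forbids via the inscribed‑radius bound above — nor escapes from the relevant geometry, so that the mean‑curvature lower bound at time $0$ and the near‑boundary behaviour at time $-1$ are both inherited by the limit. Pinning down the blow‑up scale using the hypothesis $H(0,0)\ge 1$, and in particular ruling out that the point selection runs off into a region of unbounded curvature, is where the delicate work lies.
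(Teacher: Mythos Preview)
Your normalisation by parabolic rescaling and the contradiction setup are exactly right, and the Steinhagen argument is a valid way to extract a uniformly inscribed ball $B_{\rho}(c_j)\subset\Omega^j_{-1}$ at bounded distance from the origin --- in effect, you show directly that the Hausdorff limit of the $\Omega^j_{-1}$ has nonempty interior, which is one of the two cases in the paper's proof (the other, where the limit is lower-dimensional, is dispatched there by a bare appeal to the slab estimate; your route folds the two cases into one).

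The gap you flag in your last paragraph is genuine, however, and point selection does not close it. The inscribed ball at time $-1$, together with Proposition~\ref{prop:interior_est}, yields uniform curvature bounds only on $\mathbb R^{n+1}\times(-\infty,-1]$; to pass the normalisation $H^j(0,0)\ge1$ to the limit you need smooth convergence at $t=0$, hence a curvature bound near the spacetime origin. Point selection produces $(\bar p_j,\bar t_j)$ with $H(\bar p_j,\bar t_j)\ge1$ and $H\le 2H(\bar p_j,\bar t_j)$ nearby, but gives no \emph{upper} bound on $H(\bar p_j,\bar t_j)$: if it diverges, rescaling sends $t=-1$ to rescaled time $-H(\bar p_j,\bar t_j)^2(1+\bar t_j)\to\pm\infty$, and with it the information $d_j\to0$, so the limit flow carries no trace of the contradiction hypothesis. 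Nor can your inscribed-ball bound at general $t$ help, since its radius $\rho\sqrt{-t}$ degenerates as $t\to0^-$ and so cannot be fed into the interior estimate to control $H$ near $(0,0)$.

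The paper sidesteps this without any further rescaling. Having $B_{2\rho}$ in all $\Omega^j_{-1}$, one sets $T$ to be the supremal time at which a fixed smaller ball lies in a subsequential Hausdorff limit of the $\Omega^j_t$, extracts a smooth limit on $(-\infty,T]$ via Proposition~\ref{prop:compactness}, and observes that $0\in\overline\Omega^j_T$ (since $0\in M^j_0\subset\overline\Omega^j_0\subset\overline\Omega^j_T$) together with $d_j\to0$ forces $0\in M^\infty_t$ for every $t\in[-1,T]$. Now \emph{your} endgame applies, but on $[-1,T]$ rather than $[-1,0]$: a boundary point that is stationary over an interval has $H=0$ there, so by the strong maximum principle $H^\infty\equiv0$ and the limit is a fixed slab or halfspace. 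The ball therefore persists to time $T$, maximality forces $T=0$, and only now does smooth convergence at $t=0$ yield $H^\infty(0,0)=0$, contradicting $H^j(0,0)\ge1$. In short, the ``same boundary point at two times'' idea is exactly the right lever, but it should be used to \emph{extend} the interval of convergence, not invoked after trying to reach $t=0$ by other means.
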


\begin{proof}
Suppose, contrary to the claim, that there is a sequence of entire convex ancient solutions $\{M_t^i= \partial \Omega_t^i\}_{t \in(-\infty,0]}$ with the following properties:
\begin{itemize}
\item[--] $M_0^i$ contains the origin.
\item[--] There is a sequence of times $-\tau_i \leq -H^i(0,0)^{-2}$ such that 
\[\tau_i^{-1/2} \dist(0, M_{- \tau_i}^i)\to 0\]
as $i \to \infty$.
\end{itemize}
Performing a parabolic rescaling by $\tau_i^{-\frac{1}{2}}$ for each $i \in \mathbb{N}$, we may assume that $\tau_i=1$ and $H^i(0,0) \geq 1$. Passing to a subsequence, we may assume that $\Omega_{-1}^i$ converges locally uniformly in the Hausdorff topology to a closed convex set $K$. 

We consider two cases. First, if $K$ has no interior, then it lies in a hyperplane by convexity. We are assuming $0 \in \Omega_{-1}^i$, so $K$ contains the origin, and up to a rotation we may assume that $K \subset \{x_{n+1} = 0\}$. In particular, given any $\beta >0$ and $R<\infty$, for all sufficiently large $i$ we have 
\[\Omega_{-1}^i \cap B_{R}(0) \subset \{|x_{n+1}| \leq \beta\}.\]
Choosing $R$ sufficiently large and $\beta$ sufficiently small, by Theorem \ref{prop:slab}, this implies $M_t^i$ is contained in a slab for all $t\leq 0$, contrary to assumption. 

Suppose instead $K$ contains an open ball $B_{2\rho}$. Let $T$ be the supremum over all times $t \leq 0$ such that $\Omega_t^i$ has a subsequential Hausdorff limit containing $B_{\rho/2}$. The avoidance principle and $B_{2\rho} \subset K$ ensure that $T>-1$, and by Proposition \ref{prop:compactness}, $\{M_t^i\}_{t \in (-\infty, T]}$ subconverges in $C^\infty_{\loc}$ to a smooth convex ancient solution $\{M_t=\partial \Omega_t\}_{t \in (-\infty,T]}$.  Since $0 \in \overline\Omega{}_T^i$ and $\dist(0,M_{-1}^i)\to 0$, we have $0 \in M_t$ for all $t \in [-1, T]$. Applying the strong maximum principle to $H$ shows $M_t$ is stationary, and thus consists of a hyperplane or pair of parallel hyperplanes for all $t \leq T$. In this case $B_{2\rho} \subset K  = \overline{\Omega}_{-1}$ implies $B_{2\rho} \subset \Omega_T$, hence there is a subsequence in $i$ such that $B_{\rho} \subset  \Omega_T^i$, and unless $T=0$ we obtain a contradiction to the maximality of $T$ using the avoidance principle.  Thus, $M_0^i$ converges in $C^\infty_{\loc}$ to a hyperplane or pair of parallel hyperplanes, but we rescaled to ensure $H^i(0,0) \geq 1$, so this is impossible.
\end{proof}

Using Proposition \ref{prop:paraboloid} we now establish a convergence result for sequences of entire solutions that lose their interior at $t=0$. This  can be viewed as a complement to Proposition \ref{prop:compactness}. Indeed, there convergence of $\Omega_0^i$ in the Hausdorff topology to a convex
set $K$ of dimension $n+1$ was shown to imply convergence of the flows up to and including time zero. In the following we obtain convergence up to (but not including) time zero when $\dim K\le n$.  

\begin{proposition}\label{prop:entire_compact}
Let $\{M_t^i=\partial \Omega_t^i\}_{t \in (-\infty,0]}$ be a sequence of entire convex mean curvature flows such that $0 \in M_0^i$ and
\[ \liminf_{i \to \infty} \sup_{B_R(0) \times [-R^2,0]} H^i >0\]
for some $R < \infty$,  and suppose $\Omega_0^i$ converges in the Hausdorff topology to a closed convex set of dimension at most $n$. The sequence $\{M_t^i\}_{t \in (-\infty, 0)}$ subconverges in $C^\infty_{\loc}(\R^{n+1}\times(-\infty,0))$ to a limiting solution $\{M_t=\partial\Omega_t\}_{t \in (-\infty,0)}$ such that $\dim( \cap_{t<0} \, \Omega_t) \leq n$ and $B_{\eta\sqrt{-t}}(0) \subset \Omega_t$ for all $t < 0$, where $\eta$ is the constant from Proposition \ref{prop:paraboloid}.
\end{proposition}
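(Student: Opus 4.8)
The plan is to obtain the convergence from the interior curvature estimate (Proposition~\ref{prop:interior_est}) together with the avoidance principle, and to obtain the inscribed ball estimate from the paraboloid estimate (Proposition~\ref{prop:paraboloid}) applied at the origin.

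For the convergence I would first exploit that $0\in M_0^i=\partial\Omega_0^i$: for each $i$ there is an interior point $p_i$ of $\Omega_0^i$ with $|p_i|\le 1$ and a (possibly tiny) ball $B_{\epsilon_i}(p_i)\subset\Omega_0^i$. Since $\Omega_t^i\supseteq\Omega_0^i$ for $t\le 0$, comparing with the shrinking sphere that coincides with $\partial B_{\epsilon_i}(p_i)$ at time $0$ shows $B_{\sqrt{2n\sigma}}(p_i)\subset\Omega_{-\sigma}^i$ for every $\sigma>0$; passing to a subsequence along which $p_i\to p_\infty$, the bodies $\Omega_{-\sigma}^i$ therefore contain a fixed ball for all large $i$, for each fixed $\sigma>0$. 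Estimate~\eqref{eq:int_2} of Proposition~\ref{prop:interior_est}, applied to the flows $\{\Omega_t^i\}_{t\le-\sigma}$ recentred on that ball, then bounds $H^i$ on every compact subset of $\R^{n+1}\times(-\infty,-\sigma]$ independently of $i$; since $\sigma>0$ is arbitrary, $H^i$ is bounded on compact subsets of $\R^{n+1}\times(-\infty,0)$. Using $|A^i|\le H^i$ and the Ecker--Huisken interior estimates for the derivatives of curvature \cite{EckerHuisken91}, I would extract a $C^\infty_{\loc}(\R^{n+1}\times(-\infty,0))$ subsequential limit $\{M_t=\partial\Omega_t\}_{t<0}$, which is a convex ancient flow and for which $\Omega_t^i\to\Omega_t$ locally in the Hausdorff sense for each $t<0$. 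Letting $\sigma\to\infty$ in the inclusion above shows $\cup_{t<0}\Omega_t=\R^{n+1}$, so the limit is entire; and since the curvature hypothesis prevents the limit from being flat, $H>0$ everywhere by the strong maximum principle.

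The dimension bound is a short argument by contradiction. If $\cap_{t<0}\Omega_t$ contained an open ball $B_\rho(z)$, then $B_\rho(z)\subset\Omega_{-\epsilon}$ for all $\epsilon>0$, so by local Hausdorff convergence $B_{\rho/2}(z)\subset\Omega_{-\epsilon}^i$ for $i\ge i(\epsilon)$; running a shrinking sphere forward from time $-\epsilon$ to time $0$ gives $B_{\sqrt{\rho^2/4-2n\epsilon}}(z)\subset\Omega_0^i$ for $i\ge i(\epsilon)$, so the Hausdorff limit of the $\Omega_0^i$ contains $B_{\sqrt{\rho^2/4-2n\epsilon}}(z)$ for every $\epsilon>0$, hence contains $B_{\rho/2}(z)$ --- contradicting that it has dimension at most $n$.

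It remains to prove that $B_{\eta\sqrt{-t}}(0)\subset\Omega_t$ for all $t<0$. Since each $\{M_t^i\}$ is entire and $0\in M_0^i$, Proposition~\ref{prop:paraboloid} applied at $(0,0)$ gives $B_{\eta\sqrt{-t}}(0)\subset\Omega_t^i$ for all $t\le-H^i(0,0)^{-2}$, so it suffices to show $H^i(0,0)\to\infty$: then, for each fixed $t<0$, $-H^i(0,0)^{-2}>t$ for all large $i$, whence $B_{\eta\sqrt{-t}}(0)\subset\Omega_t^i$, and passing to the limit (an open ball contained in the closure of the nonempty convex open set $\Omega_t$ lies in $\Omega_t$) yields $B_{\eta\sqrt{-t}}(0)\subset\Omega_t$. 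Showing $H^i(0,0)\to\infty$ is the step I expect to be the main obstacle; heuristically it says that the collapse of $\Omega_0^i$ at time zero must concentrate curvature at the pinch point $0\in M_0^i$, and the curvature hypothesis is what rules out a flat limit. I would establish it by contradiction: if $H^i(0,0)\le C_0$ along a subsequence, then by Hamilton's differential Harnack inequality in the form $\partial_tH\ge0$ valid for convex ancient flows (Corollary~\ref{cor:harnack} with $T=\infty$, after minimising over the tangential direction), the curvature stays $\le C_0$ along the trajectory emanating backward from $(0,0)$, and that trajectory remains within distance $C_0|t|$ of the origin; passing to the limit produces a point $x(t)\in M_t$ with $H(x(t),t)\le C_0$ and $x(t)\to0$ as $t\to0^-$. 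Feeding the bounded-curvature points $(x(t_0),t_0)$ with $t_0\to0^-$ into the paraboloid estimate for the entire limit flow, and combining with the dimension bound just proved, should force a fixed ball into $\cap_{t<0}\Omega_t$, a contradiction. This would complete the proof, and would also give $0\in\Omega_t$ for $t<0$, confirming that the limit is nonflat.
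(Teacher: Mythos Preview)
Your convergence argument contains a genuine error. The claim that comparing with the shrinking sphere which equals $\partial B_{\epsilon_i}(p_i)$ at time $0$ yields $B_{\sqrt{2n\sigma}}(p_i)\subset\Omega_{-\sigma}^i$ is false: the avoidance principle only propagates inclusions \emph{forward} in time. At a point where $M_{-\tau}^i$ touches an inscribed sphere from outside, the mean curvature of $M_{-\tau}^i$ is \emph{at most} that of the sphere, so going backward in time the sphere expands faster than $M^i$ and exits $\Omega^i$. The ancient pancake illustrates the quantitative failure (it contains a fixed interior ball yet lies in a slab for all $t$); of course the pancake is not entire, but that is exactly the point---establishing $\sqrt{|t|}$-growth of the inscribed radius for entire flows is precisely the content of Proposition~\ref{prop:paraboloid}, whose proof goes through the slab estimate (Proposition~\ref{prop:slab}) and cannot be shortcut by a sphere barrier. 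The paper instead uses the curvature hypothesis to locate $(x_i,t_i)\in B_R(0)\times[-R^2,0]$ with $H^i(x_i,t_i)\ge\varepsilon$, applies Proposition~\ref{prop:paraboloid} there to obtain a uniform ball $B_{\eta/2}(\bar x)\subset\Omega_t^i$ for $t\le T\doteqdot\bar t-\varepsilon^{-2}-1$, and then invokes concavity of the arrival time (Proposition~\ref{arrival concave}) to bridge the gap to $t=0$: since the convex spacetime region $K_i$ under the graph of $u_i$ contains both the origin and the disc $B_{\eta/2}(\bar x)\times\{T\}$, its horizontal slice at every height $t<0$ has nonempty interior, and Proposition~\ref{prop:compactness} then gives the smooth convergence on all of $(-\infty,0)$.

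Your contradiction argument for $H^i(0,0)\to\infty$ also does not close. Applying Proposition~\ref{prop:paraboloid} at the bounded-curvature points $(x(t_0),t_0)$ only yields inscribed balls in $\Omega_t$ for $t\le t_0-C_0^{-2}$; letting $t_0\to 0^-$ produces nothing for $t\in(-C_0^{-2},0)$, so no fixed ball is forced into $\cap_{t<0}\Omega_t$ and there is no contradiction with the dimension bound. The paper's route is simpler and avoids the Harnack inequality: if $H^i$ were bounded in some $B_\rho(0)\times[-\rho^2,0]$, then Proposition~\ref{prop:compactness} would extend the smooth convergence to $t=0$, forcing the Hausdorff limit of $\Omega_0^i$ to be an open $(n{+}1)$-dimensional set, contrary to hypothesis. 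This produces a sequence $(y_i,s_i)\to(0,0)$ with $H^i(y_i,s_i)\to\infty$, and Proposition~\ref{prop:paraboloid} applied at those points gives $B_{\eta\sqrt{-t}}(0)\subset\Omega_t$. Your dimension-bound paragraph, by contrast, is correct and matches the paper's reasoning.
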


\begin{proof}
There is a sequence $(x_i,t_i) \in B_R(0) \times [-R^2, 0]$ and a constant $\varepsilon >0$ such that $H^i(x_i,t_i) \geq \varepsilon$. Let us pass to a subsequence so that $(x_i,t_i)$ converges to a limit $(\bar x, \bar t)$ lying in $Q \doteqdot \overline{B_{R}(0)} \times [-R^2,0]$. Using Proposition~\ref{prop:paraboloid} we conclude 
\begin{equation}\label{eq:interior paraboloid}
B_{\eta \sqrt{t_i-t}}(x_i) \subset \Omega_t^i, \qquad t \leq t_i - \varepsilon^{-2}.
\end{equation}
In particular, for $i$ sufficiently large and $t \leq \bar t - \varepsilon^{-2} -1\doteqdot T$, 
\[B_{\eta/2}(\bar x) \subset B_{\eta \sqrt{t_i-t}}(x_i) \subset \Omega_t^i.\]

Let $u_i$ denote the arrival time of $\Omega_t^i$, and write $K_i \doteqdot \{(x,t) \in \mathbb{R}^{n+1} \times (-\infty,0] : t \leq u_i(x)\}$. By Proposition \ref{arrival concave}, $K_i$ is convex. Passing to a further subsequence if necessary, we may assume $K_i$ converges to a closed convex set $K$ in the Hausdorff topology. Since $0 \in \partial \Omega_0^i$, we have $0 \in K$. We also know $K$ contains the horizontal disc $B_{\eta/ 2}(\bar x) \times \{T\}$, so $K$ contains the convex hull of this disc and 0. In particular, $K_t := \{ x \in \mathbb{R}^n : (x,t) \in K\}$ has nonempty interior for all $t <0$. By Proposition \ref{prop:compactness}, we may pass to a subsequence such that $\{\partial \Omega_t^i\}_{t \in (-\infty,0]}$ converges in $C^\infty_{\loc}(\mathbb{R}^{n+1} \times (-\infty,0))$ to a limiting flow $\{\partial \Omega_t\}_{t \in (-\infty,0)}$. By assumption the dimension of $K_0 = \cap_{t<0}\,\Omega_t$ is at most $n$. 

Passing to a further subsequence, we may assume $H_i(0,0)$ converges to $H_0 >0$. Indeed, $\liminf_{i \to \infty} H_i(0,0) = 0$ then $K$ has a supporting halfspace at the origin which is vertical, but this contradicts \eqref{eq:interior paraboloid}. It now follows that there is a subsequence of times $t_i \to 0$ and points $y_i \in \partial \Omega_{t_i}^i$ such that $H_i(y_i,t_i) \to \infty$. Indeed, if there is no such sequence Proposition \ref{prop:compactness} implies the sets $\Omega_0^i$ are converging smoothly to an open convex set which thus has dimension $n+1$. Appealing to Proposition \ref{prop:paraboloid}, we conclude that $B_{\eta \sqrt{-t}}(0) \subset \Omega_t$ for all $t <0$. 
\end{proof}

In fact, it is possible to say more about the limit solution arising in Proposition \ref{prop:entire_compact}. Indeed, arguing as in \cite[Lemma 2.9]{Wa11}, we obtain the following.
\begin{proposition}
\label{prop:entire_compact_additional_material}
Let $\{M_t = \partial\Omega_t\}_{t \in (-\infty,0)}$ be the limit flow obtained in Proposition \ref{prop:entire_compact}. The limit set $\Sigma \doteqdot\cap_{t<0} \, \Omega_t$ is a linear subspace of $\mathbb{R}^{n+1}$. Moreover, for each $t<0$, 
\[M_t = \Sigma \times M_t^\perp\]
where $\{M_t^\perp\}_{t\in(-\infty,0)}$ is a compact convex solution of mean curvature flow in $\Sigma^\perp$.
\end{proposition}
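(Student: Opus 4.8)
The plan is to argue as in \cite[Lemma 2.9]{Wa11}, in three steps: first record the elementary properties of $\Sigma$; then show that the flow splits off the linear span of $\Sigma$; and finally invoke the dimension bound $\dim\Sigma\le n$ from Proposition \ref{prop:entire_compact} to deduce that the complementary factor is compact and that $\Sigma$ equals its span.

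\emph{Step 1: elementary properties of $\Sigma$.} Since each $\Omega_t$ is convex and $\Omega_s\subseteq\Omega_t$ whenever $s\le t<0$, the set $\Sigma=\cap_{t<0}\Omega_t$ is convex. By Proposition \ref{prop:entire_compact}, $B_{\eta\sqrt{-t}}(0)\subseteq\Omega_t$ for every $t<0$, so $0\in\Sigma$ and $\cup_{t<0}\Omega_t=\mathbb{R}^{n+1}$. In particular, if $H$ vanished somewhere the strong maximum principle would make the limit flow a stationary hyperplane or slab, contradicting $\cup_{t<0}\Omega_t=\mathbb{R}^{n+1}$; hence $H>0$ on $M_t$ for every $t<0$. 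By Proposition \ref{arrival concave} the arrival time $u$ of the limit flow is a concave function on $\mathbb{R}^{n+1}$ with $\{u=0\}=\Sigma$. I write $V\doteqdot\mathrm{span}\,\Sigma$, a linear subspace of dimension at most $n$.

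\emph{Step 2: the splitting.} Next I would show that $\Omega_t+w=\Omega_t$ for every $w\in V$ and $t<0$; equivalently, that $\Omega_t=V\times\Omega_t^\perp$ for some convex open $\Omega_t^\perp\subseteq V^\perp$, so that $M_t=V\times M_t^\perp$, where $M_t^\perp$ is the boundary of $\Omega_t^\perp$ in $V^\perp$ and $\{M_t^\perp\}$ is a convex ancient mean curvature flow in $V^\perp$. The set of $w$ with $\Omega_t+w=\Omega_t$ for all $t$ is a linear subspace, so it suffices to treat $w=v\in\Sigma$; for such $v$ the segment $[0,v]$ lies in $\Sigma\subseteq\Omega_s$ for all $s<0$. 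Arguing as in \cite[Lemma 2.9]{Wa11}, and using the slicing Lemma \ref{lem:slice_subsols}, one shows that the tangential part of $v$ along each $M_t$ lies in the nullspace of the second fundamental form: otherwise the paraboloid bound $B_{\eta\sqrt{-t}}(0)\subseteq\Omega_t$ together with concavity of $u$ would yield, for some $s$ close enough to $0$, a point of $M_s$ lying strictly outside the convex hull of $\{0\}$ and $v$, contradicting $[0,v]\subseteq\Omega_s$. A fixed null direction of the second fundamental form at every point and time forces the induced metric and the embedding to split off a Euclidean factor, which is the asserted decomposition. I expect this step to be the main obstacle, since it is where one really uses that $[0,v]\subseteq\Omega_s$ holds for every $s$, not just one time.

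\emph{Step 3: conclusion.} Granting the decomposition, $\Sigma=V\times C$ with $C\doteqdot\cap_{t<0}\Omega_t^\perp\subseteq V^\perp$ convex and $0\in C$; then $V=\mathrm{span}\,\Sigma=V\oplus\mathrm{span}\,C$ forces $\mathrm{span}\,C=\{0\}$, so $C=\{0\}$, $\Sigma=V$ is a linear subspace, and $\cap_{t<0}\Omega_t^\perp=\{0\}$. To see that $M_t^\perp$ is compact, note that $B_{\eta\sqrt{-t}}(0)\subseteq V\times\Omega_t^\perp$ gives $0\in\Omega_t^\perp$ for every $t<0$; hence the asymptotic cone $\mathcal{T}_\infty\Omega_t^\perp$ lies in $\overline{\Omega_t^\perp}$, and, since $\Omega_t^\perp$ is convex and open with $0\in\Omega_t^\perp$, in fact in $\Omega_t^\perp$. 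As in the proof of Lemma \ref{lem:convex uniqueness}, the avoidance principle shows $\mathcal{T}_\infty\Omega_t^\perp$ is independent of $t$; writing $K^\perp$ for this cone, $K^\perp\subseteq\cap_{t<0}\Omega_t^\perp=\{0\}$, so $\mathcal{T}_\infty\Omega_t^\perp=\{0\}$ and $\Omega_t^\perp$ is bounded for every $t<0$. Therefore $M_t^\perp$ is compact, which completes the proof.
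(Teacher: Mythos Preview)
Your Step~2 is the entire proof, and the sketch you give is not an argument. The claim that failure of $v^\top$ to be a null direction of $A$ would produce ``a point of $M_s$ lying strictly outside the convex hull of $\{0\}$ and $v$'' is vacuous: that hull is the segment $[0,v]$, and $M_s$ trivially has points off it. More to the point, concavity of $u$ with $u\equiv 0$ on $[0,v]$ does \emph{not} force translation-invariance of $u$ in the $v$-direction --- the concave function $u(x,y)=-\max(x,0)^2-y^2$ on $\mathbb{R}^2$ vanishes on the half-line $\{x\le 0,\ y=0\}$ without being invariant under $x\mapsto x-1$ --- so the paraboloid bound and concavity alone cannot deliver the splitting. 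You correctly flag this step as the obstacle, but the mechanism you propose does not work, and it is not what \cite[Lemma 2.9]{Wa11} does.

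The paper's argument (following Wang) is structured differently. One first splits off the maximal linear subspace contained in $K=\Sigma$ via the standard splitting theorem, reducing to the case that $K$ contains no lines; the task is then to show $K=\{0\}$. If $K$ were noncompact, say $\mathbb{R}_+e_1\subset K$, then $e_1$ lies in the recession cone of each $\Omega_t$, so the shifted regions $\Omega_t-R_ie_1$ (with $R_i\to\infty$) each contain $\Omega_t$, and by Proposition~\ref{prop:compactness} the shifted flows subconverge smoothly to a limit $\{M_t'=\partial\Omega_t'\}$ which splits off the full line $\mathbb{R} e_1$ and satisfies $\Omega_t\subset\Omega_t'$. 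Now both $M_t\cap\{x_1=0\}$ and $M_t'\cap\{x_1=0\}$ reach the origin as $t\to 0^-$; the first is a compact subsolution (Lemma~\ref{lem:slice_subsols}) enclosed by the second, which is an honest solution, so the avoidance principle forces them to coincide for all $t<0$. Then $M_t$ and $M_t'$ have interior contact and the strong maximum principle gives $M_t=M_t'$, contradicting the no-lines hypothesis on $K$. Once $K$ is compact each $\Omega_t$ is compact, and Huisken's theorem yields $K=\{0\}$. Your Step~3 asymptotic-cone argument is a clean alternative to the appeal to Huisken at the end, but it does not let you bypass the translation-limit argument needed in Step~2.
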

\begin{proof}
If $n=1$ the claim is an immediate consequence of the classification in \cite{BLT3}, so assume $n\geq 2$.

Suppose first that $K\doteqdot \cap_{t<0}\,\Omega_t$ does not contain any lines. In this case, we claim $K$ consists of the origin (note that $0\in K$ since $B_{\eta \sqrt{-t}}(0) \subset \Omega_t$). It suffices to prove that $K$ is compact, since then $\Omega_t$ is compact for all $t <0$, and hence $K$ is a point by Huisken's theorem \cite{Hu84}. 

So suppose, to the contrary, that $K$ is noncompact. Since $K$ does not contain any lines, we may choose Euclidean coordinates for $\mathbb{R}^{n+1}$ such that $\mathbb{R}_+ e_1 \subset K$ and $K \cap \{x_1 = c\}$ is compact for all $c \geq 0$. In particular, $\tilde M_t \doteqdot M_t \cap \{x_1 = 0\}$ is compact for all $t<0$. Fix a sequence of displacements $R_i \to \infty$ and define, for each $t\le 0$, 
\[M_t^i \doteqdot  M_t - R_i e_1\,.\]
Using Proposition \ref{prop:compactness} and the avoidance principle, we conclude that a subsequence of the flows $\{M_t^i\}_{t\in(-\infty,0)}$ converges in $C^\infty_{\loc}$ to a limiting flow $\{M_t'=\partial \Omega_t'\}_{t \in(-\infty,0)}$ which satisfies $\dim(\cap_{t<0} \, \Omega_t')\leq n$. Moreover, $\mathbb{R} e_1 \subset \Omega_t'$ for all $t<0$, and hence $M_t'$ splits off $\mathbb{R} e_1$. 

Observe that $\Omega_t \subset \Omega_t'$ for all $t <0$. In particular, $\tilde M_{-1}$ is contained in the closure of $\Omega_{-1}' \cap \{x_1 = 0\}$. Since the family $M_t' \cap \{x_1=0\}$ solves mean curvature flow in $\{x_1=0\}$, Lemma \ref{lem:slice_subsols} and the avoidance principle imply that the curves $\tilde M_t$ and $M_t' \cap \{x_1= 0\}$ either coincide, or else the distance between them is positive and nondecreasing in time. However we also know that $\tilde M_t$ and $M_t' \cap \{x_1= 0\}$ both reach the origin at $t=0$, so in fact the two families of curves coincide. In particular, $M_t$ and $M_t'$ make interior contact for all $t<0$, and therefore coincide by the strong maximum principle. This is a contradiction, since $\Omega_t'$ contains $\mathbb{R} e_1$, but we assumed $K$ does not contain any lines. 

This completes the case where $K$ does not contain any lines. In general, we may write $M_t = \Sigma \times \partial \Omega_t^\perp$ for all $t < 0$, where $\Sigma$ is a fixed affine subspace and $\cap_{t<0} \, \Omega_t^\perp$ does not contain any lines. Repeating the above argument we find that $\cap_{t<0} \, \Omega_t^\perp = \{0\}$, and hence $\cap_{t<0} \, \Omega_t = \Sigma$. 
\end{proof}

We now establish the key compactness results for sequences of entire flows. 

\begin{theorem}[Sequential compactness of entire flows]
\label{thm:entire_compact}
Let $\{\partial \Omega_t^i \}_{t \in (-\infty,0]}$, $i\in\N$, constitute a sequence of entire convex ancient mean curvature flows. Suppose that $0 \in \partial\Omega_0^i$ for each $i$ and $H^i(0,0)\to H_0\in [0,\infty]$ as $i\to\infty$. After passing to a subsequence, $\{\partial\Omega_t^i\}_{t\in(-\infty,0]}$ converges in $C^\infty_{\loc}(\R^{n+1}\times(-\infty,0))$ to a convex ancient mean curvature flow $\{\partial \Omega_t\}_{t \in (-\infty,0)}$. Moreover,
\begin{enumerate}
\item \label{item:entire_compact_0} if $H_0=0$, then the convergence is in $C^\infty_{\loc}(\R^{n+1}\times(-\infty,0])$ and the limit is a stationary hyperplane of multiplicity one.
\item \label{item:entire_compact_1} if $H_0\in(0,\infty)$, then the convergence is in $C^\infty_{\loc}(\R^{n+1}\times(-\infty,0])$ and the limit $\{\partial \Omega_t\}_{t \in (-\infty,0]}$ is entire. 
\item \label{item:entire_compact_2} if $H_0=\infty$, then the limit $\{\partial \Omega_t\}_{t \in (-\infty,0)}$ is entire, $\Sigma\doteqdot \cap_{t<0}\, \Omega_t$ is an affine subspace of $\R^{n+1}$, and $\Omega_t$ splits as a product $\Sigma\times\Omega^\perp_t$, where $\{\Omega^\perp_t\}_{t\in(-\infty,0)}$ is a family of bounded convex bodies in $\Sigma^\perp$.
\end{enumerate}
\end{theorem}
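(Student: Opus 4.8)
The plan is to pass to a subsequence along which $\overline{\Omega_0^i}$ converges in the local Hausdorff topology to a closed convex set $K\ni 0$, and then to split into cases according to $\dim K$. Since $0\in\partial\Omega_0^i$ for every $i$, the origin cannot lie in the interior of $K$, so $K$ is proper; in particular $\dim K=n+1$ forces $0\in\partial K$.

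First suppose $\dim K=n+1$. A standard simplex argument shows that $\Omega_0^i$ contains a fixed open ball for all large $i$, so Proposition~\ref{prop:interior_est} bounds $H^i$ uniformly in $i$ on compact subsets of $\R^{n+1}\times(-\infty,0]$, and the Ecker--Huisken estimates upgrade this to $C^\infty_{\loc}(\R^{n+1}\times(-\infty,0])$ subconvergence to a convex ancient flow $\{\partial\Omega_t\}_{t\le 0}$ with $0\in\partial\Omega_0$ and $H(0,0)=H_0$. As the limit is smooth at $t=0$ this already forces $H_0<\infty$, so this case is incompatible with $H_0=\infty$. If $H_0\in(0,\infty)$, the limit is nonflat, and applying the paraboloid estimate (Proposition~\ref{prop:paraboloid}) to each flow at the spacetime point $(0,0)$ — legitimate since $H^i(0,0)\in[H_0/2,2H_0]$ eventually — gives $B_{\eta\sqrt{-t}}(0)\subset\Omega_t^i$ for $t\le -4H_0^{-2}$, which passes to the limit; letting $-t\to\infty$ yields $\cup_{t\le 0}\Omega_t=\R^{n+1}$, so the limit is entire, proving \eqref{item:entire_compact_1}. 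If $H_0=0$, the strong maximum principle applied to $(\partial_t-\Delta)H=|A|^2H$ forces $H\equiv 0$ on the limit, so every time-slice is a convex hypersurface with vanishing mean curvature, hence a halfspace or a slab. To exclude a slab I would apply the slab estimate (Proposition~\ref{prop:slab}), rescaled and re-centred at an interior point $p$ of the limiting slab: a stationary slab has some fixed width, which is $\ll \beta(n)s$ on the parabolic scale $\sqrt s$, so for $s$ large and then $i$ large the rescaled Proposition~\ref{prop:slab} would trap $\{\partial\Omega_t^i\}$ in a fixed slab, contradicting entireness. Hence $K$ is a halfspace, the limit is a single stationary hyperplane, and it has multiplicity one (the case $\dim K=n+1$ rules out multiplicity, exactly as in the proof of Proposition~\ref{prop:compactness}), which is \eqref{item:entire_compact_0}.

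Now suppose $\dim K\le n$. The hypothesis of Proposition~\ref{prop:entire_compact} holds — if $H^i$ decayed to zero near $(0,0)$ the hypersurfaces would converge smoothly near the origin to a piece of a hyperplane and $\dim K$ would be $n+1$ — so we obtain $C^\infty_{\loc}(\R^{n+1}\times(-\infty,0))$ subconvergence to a flow $\{\partial\Omega_t\}_{t<0}$ with $B_{\eta\sqrt{-t}}(0)\subset\Omega_t$ for all $t<0$ (in particular the limit is entire) and $\dim(\cap_{t<0}\Omega_t)\le n$; Proposition~\ref{prop:entire_compact_additional_material} then provides the splitting $\Omega_t=\Sigma\times\Omega_t^\perp$ with $\Sigma:=\cap_{t<0}\Omega_t$ affine and $\{\Omega_t^\perp\}$ compact convex in $\Sigma^\perp$, which is the content of \eqref{item:entire_compact_2}. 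It remains to check this case forces $H_0=\infty$ (so it cannot occur when $H_0<\infty$). Fix $t_0<0$ and $\gamma<1$; for $i$ large, $B_{\gamma\eta\sqrt{-t_0}}(0)\subset\Omega_{t_0}^i$, so $\dist(0,\partial\Omega_{t_0}^i)\ge\gamma\eta\sqrt{-t_0}$, and hence the flow line $X^i(\cdot)$ of $\{\partial\Omega_t^i\}$ with $X^i(0)=0$ satisfies $\int_{t_0}^0 H^i(X^i(t),t)\,dt=\operatorname{length}(X^i|_{[t_0,0]})\ge|X^i(t_0)|\ge\gamma\eta\sqrt{-t_0}$. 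By Corollary~\ref{cor:harnack} with $T=\infty$ and $v=0$, $H^i$ is nondecreasing along flow lines, so $H^i(0,0)(-t_0)\ge\gamma\eta\sqrt{-t_0}$, i.e.\ $H^i(0,0)\ge\gamma\eta/\sqrt{-t_0}$; letting $t_0\to 0^-$ gives $H^i(0,0)\to\infty$.

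Assembling: when $H_0\in[0,\infty)$ only the first case can occur, yielding \eqref{item:entire_compact_0} or \eqref{item:entire_compact_1}; when $H_0=\infty$ only the second can occur, yielding \eqref{item:entire_compact_2}; and in every case the stated base conclusion (subconvergence in $C^\infty_{\loc}(\R^{n+1}\times(-\infty,0))$ to a convex ancient flow) holds. I expect the real work to be the degenerate analysis at $t=0$: excluding the stationary-slab limit in the $H_0=0$ branch (where the rescaled slab estimate is the essential input) and the Harnack-based curvature blow-up in the $\dim K\le n$ case; the remainder is an assembly of Propositions~\ref{prop:interior_est}, \ref{prop:compactness}, \ref{prop:paraboloid}, \ref{prop:slab}, \ref{prop:entire_compact}, \ref{prop:entire_compact_additional_material} together with Corollary~\ref{cor:harnack}.
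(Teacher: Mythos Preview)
Your argument is essentially correct and close in spirit to the paper's, though organized differently: you split first on $\dim K$ (the Hausdorff limit of $\overline{\Omega_0^i}$) and then read off $H_0$, whereas the paper works with the spacetime region under the concave arrival-time graph, splits first on $H_0$, and uses the slope of its supporting halfspace at the origin to link $H_0$ with the dimension of the time-zero slice. Your Harnack/flow-line argument for ``$\dim K\le n\Rightarrow H_0=\infty$'' is a genuinely different (and pleasantly direct) substitute for the paper's supporting-halfspace reasoning; both ultimately rest on Corollary~\ref{cor:harnack} (equivalently, concavity of the arrival time), but yours is more hands-on while the paper's is more geometric.

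There is one small gap. In the $\dim K\le n$ branch you assert that ``if $H^i$ decayed to zero near $(0,0)$ \dots\ $\dim K$ would be $n+1$'', but smooth local convergence of $\partial\Omega_0^i$ to a piece of hyperplane does \emph{not} by itself force $\dim K=n+1$: the convergence could occur with multiplicity two (two sheets of $\partial\Omega_0^i$ collapsing onto the same hyperplane), in which case $K$ lies in that hyperplane and has $\dim K\le n$. You need to invoke Proposition~\ref{prop:slab} here---exactly as you already do in the $H_0=0$ subcase of the $\dim K=n+1$ branch---to rule out multiplicity two (a thin-slab configuration at some scale traps the whole flow in a slab, contradicting entireness). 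The paper handles precisely this point explicitly in its treatment of $H_0=0$. Once this is patched, your proof goes through.
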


\begin{proof}
Let $u_i : \mathbb{R}^{n+1} \to \mathbb{R}$ be the arrival time of $\{\partial \Omega_t^i\}_{t \in (-\infty,0]}$, which is concave by Proposition \ref{arrival concave}. Let $K_i$ denote the closed convex region under the graph of $u_i$, i.e.,
\[K_i := \{(x,t) \in \mathbb{R}^{n+1}\times (-\infty,0]: t \leq u_i(x)\}.\]
By assumption, $0 \in K_i$, so after passing to a subsequence we may assume that the $K_i$ converge in the Hausdorff topology to a closed convex set $K$ with $0 \in K$. Let $K_t := \{x \in \mathbb{R}^n : (x,t)\in K\}$. 

Consider first the case that $H_0=0$. This implies that $K$ admits a vertical supporting hyperplane at the origin. If $\dim(K_0)=n+1$, then Proposition \ref{prop:compactness} implies that the convergence is  smooth up to time zero, and hence the limit must satisfy $H=0$ at the spacetime origin. The strong maximum principle then implies that $H\equiv 0$, so $\Omega_t$ is a stationary slab or halfspace. If $\Omega_t$ is a slab then, given any $R<\infty$ and $\beta >0$, for large $i$ we may rescale $\Omega_t^i$ so that $\Omega_{-1}^i \cap B_R(0) \subset \{|x_{n+1}| \leq \beta\}$. By Proposition \ref{prop:slab}, this implies $\Omega_t^i$ lies in a fixed slab for all $t \leq 0$, contrary to assumption. Therefore, $\Omega_t$ is a halfspace, which establishes the claim. 

So suppose that $\dim(K_0)\le n$. If
\[
\liminf_{i \to \infty} \sup_{B_R(0) \times [-R^2,0]}H^i>0
\]
for some $R>0$, then Proposition \ref{prop:entire_compact} implies that 
$B_{\eta\sqrt{-t}}\subset K_t$. But this is impossible, since $K$ admits a vertical supporting hyperplane at the origin. 

So in fact
\[
\liminf_{i \to \infty} \sup_{B_R(0) \times [-R^2,0]}H^i=0
\]
for all $R>0$. But then the Ecker--Huisken interior estimates imply that, after passing to a subsequence, the convergence is smooth up to time zero and the limit satisfies $H\equiv 0$. This implies that the limit is a hyperplane of multiplicity one or two. The latter is ruled out by Proposition \ref{prop:slab}. 

Suppose next that $H_0 \in (0, \infty)$. In this case $K$ admits a supporting halfspace $S$ of slope $H_0^{-1}$ at the origin. It follows that $\dim(K_0) = n+1$, since if $\dim(K_0) \leq n$ then Proposition \ref{prop:paraboloid} implies that
\[\{(x,t) : t \leq \eta^{-2} |x|^2 \} \subset K \subset S,\]
which contradicts the fact that $S$ is not horizontal. We conclude that $K_0$ has nonempty interior, and thus deduce part (2) of the theorem from Proposition \ref{prop:compactness}.

Next, suppose $H_0 = \infty$. In this case Proposition \ref{prop:compactness} implies that $K_0$ has no interior, and hence part \eqref{item:entire_compact_2} of the theorem follows immediately from Proposition \ref{prop:entire_compact} and Proposition \ref{prop:entire_compact_additional_material}. 
\end{proof}

Theorem \ref{thm:noncollapsing equivalence} now follows immediately.

\begin{proof}[Proof of Theorem \ref{thm:noncollapsing equivalence}]
It is easy to deduce that (3) implies (1). The implication (2) implies (3) follows from the strong maximum principle as in \cite{MR3384488}. Thus it suffices to show that (1) implies (2). 

If this is not the case, then there is a sequence of convex ancient mean curvature flows $\{\partial \Omega_t^i\}_{t \in (-\infty,0]}$ such that $0 \in \partial \Omega_0^i$ and $H_i(0,0) =1$, but the inscribed radius $\bar r_i$ at (0,0) satisfies $\bar r_i \to 0$. Part (2) of Theorem \ref{thm:entire_compact} tells us that $\{\partial \Omega_t^i\}_{t \in (-\infty,0]}$ subconverges in $C^\infty_{\loc}(\mathbb{R}^{n+1} \times (-\infty,0])$ to a convex ancient mean curvature flow $\{\partial \Omega_t\}_{t \in (-\infty,0]}$. In particular, $\liminf_{i \to \infty} \bar r_i >0$, which is a contradiction.
\end{proof}

\section{Grim Reapers at infinity}
\label{sec:Reapers}

We next prove that every convex ancient mean curvature flow which is contained in a slab is asymptotic to at least one Grim hyperplane. To begin, we show that every such solution actually sweeps out a slab. 

\begin{lemma}
\label{lem:slabsweep}
Let $\{M_t=\partial \Omega_t\}_{t \in (-\infty,0]}$ be a convex ancient mean curvature flow such that $M_t \subset \{|x_{n+1}| \leq \beta' \}$ for all $t \leq 0$. There are constants $\beta \leq \beta'$ and $a \in \mathbb{R}$ such that 
\[\cup_{t \leq 0} \, \Omega_t = \{|x_{n+1}-a|\leq\beta\}.\]
\end{lemma}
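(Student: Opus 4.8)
The goal is to show the union $U \doteqdot \cup_{t\le 0}\,\Omega_t$ is exactly a slab $\{|x_{n+1}-a|\le \beta\}$. Since the timeslices are nested ($\Omega_s \subset \Omega_t$ for $s \le t$), $U$ is an increasing union of convex bodies, hence $U$ is itself convex, and by hypothesis $U \subset \{|x_{n+1}|\le\beta'\}$, so $\overline U$ is contained in a slab in the $e_{n+1}$ direction. The content of the lemma is that $\overline U$ is \emph{precisely} a slab, i.e.\ that $\overline U$ contains a full $n$-plane (equivalently, the asymptotic cone $\mathcal T_\infty U$ is the hyperplane $\{x_{n+1}=0\}$) and that its boundary consists of exactly two parallel hyperplanes symmetric about $x_{n+1}=a$. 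First I would argue that $U$ cannot be bounded: if $\overline U$ were compact, then by Huisken's theorem \cite{Hu84} (applied to the compact convex flow, which must become extinct in finite time) the flow could not be ancient, a contradiction; and $U$ cannot lie in any halfspace whose boundary is not parallel to $\{x_{n+1}=0\}$ either, by a similar argument after noting $\Omega_t \to$ a point would force finite extinction — more carefully, one shows the flow, being convex ancient and trapped in a slab, must be noncompact with the noncompact directions spanning a hyperplane.

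The cleaner route is to identify $\mathcal T_\infty \Omega_t$. As in the proof of Lemma \ref{lem:convex uniqueness}, an avoidance-principle argument shows $\mathcal T_\infty\Omega_t$ is independent of $t$; call it $K$. Since every $\Omega_t \subset \{|x_{n+1}|\le\beta'\}$, the cone $K$ lies in $\{x_{n+1}=0\}$, so $\dim K \le n$. If $\dim K \le n-1$, then $K$ lies in a lower-dimensional subspace and, combined with boundedness in the remaining directions, one deduces each $\Omega_t$ has bounded cross-sections forcing — via Lemma \ref{lem:slice_subsols} applied to a suitable $2$-plane slice and the avoidance principle, or directly via Huisken — finite-time extinction of the flow, contradicting ancientness (this is essentially the mechanism in Proposition \ref{prop:entire_compact_additional_material}: a convex ancient flow whose limit set contains no line must be extinct). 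Therefore $\dim K = n$, i.e.\ $K = \{x_{n+1}=0\}$, so $\Omega_t$ (hence $U$) contains a translate of $\{x_{n+1}=0\}$ for every $t$. Thus $\overline U$ is a closed convex set containing an $n$-plane and contained in the slab $\{|x_{n+1}|\le\beta'\}$, which forces $\overline U = \{c \le x_{n+1} \le d\}$ for some $c \le d$ (a convex set containing a hyperplane and lying in a slab parallel to it is itself such a slab). Writing $a = (c+d)/2$ and $\beta = (d-c)/2 \le \beta'$ gives $\overline U = \{|x_{n+1}-a|\le\beta\}$; since $U$ is the increasing union of open-enclosed-region bodies $\Omega_t$ and each $\partial\Omega_t$ is strictly inside the slab for $t<0$ (the slab faces are stationary, so by the avoidance principle $\Omega_t$ never touches $\partial\overline U$), we get $U = \{|x_{n+1}-a|\le\beta\}$ as an open set — possibly after checking the degenerate case $\beta = 0$ is excluded because then $U$ would be a hyperplane and $H\equiv 0$, contradicting $H>0$ (or, in the generality of this lemma where $H>0$ is not assumed, $\beta>0$ holds automatically unless the flow is a stationary hyperplane, which one can allow or exclude as a trivial case).

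The main obstacle I anticipate is the step ruling out $\dim K \le n-1$: one must genuinely use ancientness to exclude a convex ancient flow trapped in a "slab of a slab," and the clean way is to quote or re-run the argument of Proposition \ref{prop:entire_compact_additional_material} / Huisken's theorem showing that a convex ancient mean curvature flow whose spatial slices have uniformly bounded diameter in some direction transverse to all its lines must become extinct in finite time. A secondary technical point is making the passage from "$\Omega_t$ contains a translate of the hyperplane $\{x_{n+1}=0\}$ for each $t$" to "the union is exactly the slab" — here one uses that the translates can be taken with $t$-independent bounded displacement (again by the $t$-independence of the asymptotic cone and the slab confinement), so the union does not "leak" past a finite slab, together with the elementary convex-geometry fact that a convex set sandwiched between a hyperplane it contains and a parallel slab is a slab.
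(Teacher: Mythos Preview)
Your approach has a genuine gap. The asymptotic cone $K = \mathcal T_\infty \Omega_t$ of an individual timeslice need not be the full hyperplane $\{x_{n+1}=0\}$ --- in fact it can be $\{0\}$. The rotationally symmetric ancient pancake \cite{BLT1} is a compact convex ancient solution lying in a slab, so every $\Omega_t$ is bounded and $\mathcal T_\infty\Omega_t=\{0\}$ for all $t$, yet the flow is ancient and its union \emph{is} the slab. Thus your step 3, which tries to rule out $\dim K\le n-1$ by deriving a ``finite-time extinction contradicting ancientness'', cannot succeed: bounded (even uniformly bounded--width) cross-sections evolving as subsolutions to curve shortening shrink in \emph{forward} time, which says nothing against existence on $(-\infty,0]$. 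Proposition~\ref{prop:entire_compact_additional_material} is about the forward limit set $\cap_{t<0}\Omega_t$ of a flow with a time-zero singularity and does not give the mechanism you want.

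The paper's argument avoids asymptotic cones of timeslices entirely and works directly with the union $U$. It argues by contradiction: if $U$ had a supporting halfspace $\{x\cdot e\le c\}$ with $|e\cdot e_{n+1}|<1$, then the two-dimensional slice $\tilde M_t = M_t\cap\spa\{e,e_{n+1}\}$ is (by Lemma~\ref{lem:slice_subsols}) a subsolution to curve shortening trapped in the half-strip $\{|x_{n+1}|\le\beta'\}\cap\{x\cdot e\le c\}$. The key barrier is a \emph{Grim Reaper} asymptotic to $\{|x_{n+1}|=2\beta'\}$: such a Reaper sweeps the entire half-strip and reaches the origin in a fixed finite time $T=T(\beta',e,c)$ from any starting position. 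Enclosing $\tilde M_{-\tau}$ at time $-\tau$ and running forward, the avoidance principle forces $0\notin\Omega_0$ once $\tau>T$, contradicting $0\in\Omega_0$. The point you are missing is that a translating barrier (Grim Reaper) in a half-strip reaches any fixed point in time bounded independently of how far back you start; a shrinking barrier (circle) does not give this, which is why your ``bounded slice $\Rightarrow$ extinction'' heuristic fails.
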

\begin{proof}
We may assume without loss of generality $0 \in \Omega_0$. The claim can be restated as follows: every supporting halfspace of $\cup_{t \leq 0}\,\Omega_t$ is of the form $\{\pm\, x\cdot e_{n+1} \leq c\}$. We prove this statement by contradiction. 

Suppose $e$ is a unit vector such that $|e \cdot e_{n+1}| < 1$, yet 
\[\cup_{t\leq 0}\, \Omega_t \subset \{x \cdot e \leq c\}.\]
Let $\Sigma \doteqdot \spa\{e, e_{n+1}\}$ and define 
\[\tilde M_t \doteqdot M_t \cap \Sigma.\]
According to Lemma~\ref{lem:slice_subsols} the family of smooth convex curves $\{\tilde M_t\}_{t\in(-\infty, 0]}$ form a subsolution of curve-shortening flow in $\Sigma$. On the other hand, since
\[\tilde M_t \subset \Sigma \cap \{|x_{n+1}| \leq \beta'\} \cap \{x \cdot e \leq c\},\]
given any $\tau \leq 0$ we can enclose $\{\tilde M_t\}_{t\in [\tau, 0]}$ in a Grim Reaper (asymptotic to, say, the pair of lines $\Sigma \cap \{|x_{n+1}| = 2 \beta'\}$) which reaches the origin at time $t = \tau+ T$, where $T  < \infty$ depends only on $\beta'$, $e$ and $c$. We arranged that $0 \in \Omega_0$, so for $|\tau|$ sufficiently large we obtain a contradiction to the avoidance principle.
\end{proof}

We may now prove the Grim Reaper detection theorem. This is inspired by \cite[Proposition 5.6]{BLTatomic}, but here we do not assume $|A|$ is bounded on compact time intervals. 

\begin{proposition}[Grim Reapers at infinity]
\label{prop:grim_plane}
A convex ancient solution $\{M_t\}_{t \in(-\infty,0]}$ satisfying $M_t \subset \{|x_{n+1}| \leq \beta\}$ for all $t \leq 0$ admits a sequence of rescaled spacetime translations which converges in $C^\infty_{\loc}(\R^{n+1}\times(-\infty,\infty))$ to $\{\mathbb{R}^{n-1} \times \Gamma_t\}_{t \in (-\infty,\infty)}$ after a fixed rotation in  $\R^{n+1}$, where $\{\Gamma_t\}_{t\in(-\infty,\infty)}$ is the Grim Reaper.
\end{proposition}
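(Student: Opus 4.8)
The plan is to detect the Grim hyperplane as a limit of parabolic rescalings centered at points where the curvature is maximal on increasingly long time intervals. First, by Lemma~\ref{lem:slabsweep} we may assume (after a vertical translation) that $\cup_{t\le 0}\,\Omega_t=\{|x_{n+1}|\le\beta\}$ for some $\beta>0$, and we normalize so that $\beta$ is, say, $\tfrac{\pi}{2}$ (the width of the standard Grim Reaper); since the solution is not a stationary slab we have $H>0$ everywhere, and by the splitting theorem (or Lemma~\ref{lem:slice_subsols} applied to horizontal slices) the noncompact directions contribute nothing to the curvature, so it suffices to produce the two-dimensional Grim Reaper $\{\Gamma_t\}$ as a limit of the slices $\tilde M_t = M_t\cap\Sigma$ for a suitable fixed $2$-plane $\Sigma$, and then the full limit will be $\mathbb{R}^{n-1}\times\Gamma_t$ after a rotation. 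The heart of the matter is therefore the two-dimensional case, where I now focus.

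Next I would set up the point selection. For each $j\in\mathbb{N}$, consider the flow on the interval $[-j,0]$ and let $H_j\doteqdot\sup_{t\in[-j/2,0]}\max_{M_t}H$; this supremum is attained at some $(p_j,t_j)$ because the slab-confined convex solution has, by the interior estimate \eqref{eq:int_2} of Proposition~\ref{prop:interior_est} (applied with a ball of fixed radius sitting inside some $\Omega_{t}$), curvature bounded on $[-j/2,0]$ for each fixed $j$, and the timeslices are compact in the essential directions. A width argument shows $H_j\to\infty$: if $H_j$ stayed bounded, the Ecker--Huisken estimates would give a smooth limit flow in the slab that is eternal, confined to a slab of width $\pi$, and has bounded curvature, and one checks (e.g. via the slab estimate of Proposition~\ref{prop:slab}, or by a direct avoidance-principle argument enclosing the solution in a shrinking Grim Reaper of slightly larger width as in the proof of Lemma~\ref{lem:slabsweep}) that this forces $H\equiv 0$, a contradiction. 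Now perform the parabolic rescaling $M_t^j\doteqdot H_j\big(M_{t_j+H_j^{-2}t}-p_j\big)$ and translate $p_j$ to the origin. By construction, on the rescaled flows $H\le 1+o(1)$ on the time interval $(-\infty, H_j^2(-t_j-(-j/2))]\supset(-\infty,j^2/4]$ (using $t_j\le 0$), the time intervals exhaust $(-\infty,\infty)$, and $H=1$ at the spacetime origin. Apply Proposition~\ref{prop:compactness} (or directly the Ecker--Huisken interior estimates, which apply since $|A|\le H\le C$): a subsequence converges in $C^\infty_{\mathrm{loc}}(\mathbb{R}^{n+1}\times(-\infty,\infty))$ to an eternal convex solution $\{M_t^\infty\}$ with $H\le 1$ everywhere, $H=1$ at the origin, and $M^\infty_t\subset\{|x_{n+1}|\le\pi/2\}$ for all $t$ (the width is inherited because the rescalings expand a fixed-width slab, but the curvature normalization keeps the basepoint at a point of near-maximal curvature, so no collapse of the width occurs — this needs the observation that the distance from $p_j$ to each face of the slab, measured in the rescaled metric, stays bounded below, which follows from the interior estimate \eqref{eq:int_2} since a point too deep inside would have too-small curvature, and bounded above since otherwise the width would be lost; one should also invoke Lemma~\ref{lem:slabsweep} applied to the limit to see the limit sweeps out a full-width slab).

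Finally I would identify the limit. The eternal limit $\{M^\infty_t\}$ is convex, has $H$ attaining its maximum value $1$ at an interior spacetime point, so by the strong maximum principle applied to the evolution equation for $H$ (or rather for $\log H$, which satisfies $(\partial_t-\Delta)\log H=|A|^2-|\nabla\log H|^2\ge 0$ — here I use Corollary~\ref{cor:harnack}, i.e. Hamilton's Harnack inequality in the eternal case, which gives $\partial_t H\ge 0$, hence $\partial_t H\equiv 0$ and then $|A|^2\equiv 0$ off a lower-dimensional set is false; rather the rigidity case of Hamilton's Harnack for eternal solutions forces $\{M^\infty_t\}$ to be a translating soliton). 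A convex translating soliton confined to a slab of width $\pi$ must be, up to rotation and the splitting off of $\mathbb{R}^{n-1}$, the Grim Reaper $\Gamma_t$: indeed its $2$-dimensional profile is a convex eternal translator curve in a strip, and Grim Reapers are the only such curves (this is classical — a convex translating curve solving $\kappa=\langle e,\nu\rangle$ in a strip of width exactly $\pi$ is $y=-\log\cos x$ up to rigid motion). Passing back through the fixed rotation that aligned the slab with $\{|x_{n+1}|\le\pi/2\}$ completes the proof.

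The main obstacle I anticipate is controlling the location of the basepoints $p_j$ within the slab after rescaling — i.e. ensuring the rescaled solutions do not degenerate by having the basepoint race off to $\pm\infty$ in a noncompact direction or collapse against a face of the slab, so that the limit retains the full slab width $\pi$ rather than a proper sub-slab (which would be a lower-multiplicity line, not a Grim Reaper). This is handled by the two-sided curvature estimates of Proposition~\ref{prop:interior_est}: curvature near the "waist" of the slab is bounded below (forcing $H_j$ to be comparable to curvature at a bounded rescaled distance from both faces), which pins the basepoint at a uniformly positive and uniformly bounded rescaled distance from each face, so the limiting slab has width exactly $\pi$. The rigidity step (eternal + max of $H$ attained $\Rightarrow$ translator $\Rightarrow$ Grim) is standard once that is in place.
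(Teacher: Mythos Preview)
Your approach has two genuine gaps. First, the claim $H_j\to\infty$ is false: your justification asserts that an eternal convex flow in a slab with bounded curvature must satisfy $H\equiv 0$, but the Grim hyperplane itself is a counterexample, as are the ancient pancakes, whose curvature stabilises near the Grim Reaper value as $t\to-\infty$ rather than blowing up. So your parabolic rescaling by $H_j$ is vacuous; at best you should be translating, not rescaling. Even then you have not shown $H$ is globally bounded (the paper does not assume this, and for noncompact $M_t$ the maximum of $H$ over a timeslice need not be attained), which is why the paper uses point-picking (Lemma~\ref{lem:pp}) to obtain only \emph{local} curvature control near carefully chosen basepoints.

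Second, even granting an eternal convex translator in a slab via Harnack rigidity, this does not yield a Grim hyperplane when $n\ge 2$: the flying-wing translators are convex, lie in slabs, and split off no Euclidean factor. Your ``reduction to the two-dimensional profile'' assumes the splitting $\mathbb{R}^{n-1}\times\Gamma_t$, which is precisely what must be proved; and the attempt via slices $M_t\cap\Sigma$ fails since by Lemma~\ref{lem:slice_subsols} these are only \emph{subsolutions} of curve shortening, so their limits need not solve curve shortening, nor does a slice limit determine the full $n$-dimensional limit. The paper's argument is quite different: it chooses basepoints near the nearest point of the projected boundary $\tilde M_{t_i}=\partial(\pi(\Omega_{t_i}))$ to the origin as $t_i\to-\infty$, and for each $1\le k\le n-1$ builds a triangle in $\Omega_{\tau_i}$ with one vertex at the basepoint and the other two far along $\pm e_k$; after rescaling these triangles open up to contain the full line $\mathbb{R}e_k$, forcing the limit to split off $\mathbb{R}^{n-1}$ directly. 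The residual curve factor is then identified via the classification of noncompact convex ancient solutions to curve shortening flow, with no appeal to Harnack rigidity or any higher-dimensional translator classification.
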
 

For the convenience of the reader we note the following proof simplifies considerably in case $M_t$ is reflection symmetric through the middle plane $\{x_{n+1}=0\}$ for all $t\leq 0$.  Under this additional assumption $\tilde \Omega_t$ is a subset of $\Omega_t$, so it suffices to take $u_{k,i}= \tilde u_{k,i}$ and $v_{k,i}=\tilde v_{k,i}$ in the argument below.

\begin{proof}[Proof of Proposition \ref{prop:grim_plane}]
In light of Lemma \ref{lem:slabsweep} we may assume without loss of generality
\[\cup_{t \leq 0} \, \Omega_t = \{|x_{n+1}| \leq \beta\}.\]
Let $\pi$ denote the orthogonal projection of $\mathbb{R}^{n+1}$ onto $\{x_{n+1} = 0\}$, define $\tilde \Omega_t \doteqdot \pi(\Omega_t)$, and let $\tilde M_t$ denote the $(n-1)$-dimensional boundary of $\tilde \Omega_t$ in $\{x_{n+1} = 0\}$. Observe $p \in M_t$ satisfies $\pi(p) \in \tilde M_t$ if and only if $\nu(p,t) \cdot e_{n+1} = 0$, and we have 
\begin{equation}
\label{eq:slice_sweep}
\cup_{t \leq 0}\,\tilde \Omega_t = \{x_{n+1}=0\}.
\end{equation}

Fix a sequence $t_i \to -\infty$ and let $\tilde y_i \in \tilde M_{t_i}$ be such that 
\[|\tilde y_i| = \min_{y \in \tilde M_{t_i}} |y|.\]
There is a corresponding sequence $y_i \in M_{t_i}$ such that $\pi(y_i) = \tilde y_i$, hence $\nu(y_i, t_i) \cdot e_{n+1} = 0$. Define 
\[\varepsilon' \doteqdot \limsup_{i \to \infty} \sup_{B_{4\beta}(y_i)} \frac{H(\cdot, t_i)}{2}\qquad \text{and} \qquad \varepsilon \doteqdot  \min\{1 , \varepsilon'\}.\]
If $\varepsilon = 0$ the connected component of $(M_{t_i}-y_i)\cap B_{4\beta}(0)$ containing the origin converges to a piece hyperplane orthogonal to $\{x_{n+1}=0\}$ as $i \to \infty$. 
This contradicts $M_{t_i} \subset \{|x_{n+1}| \leq \beta\}$, so we conclude $\varepsilon >0$. Passing to a subsequence if necessary we can choose a new sequence $q_i \in M_{t_i} \cap B_{4\beta}(y_i)$ such that $H(q_i, t_i) \geq \varepsilon$.  Define $L \doteqdot 8 \varepsilon^{-1} + 8\beta$.  

It follows from \eqref{eq:slice_sweep} that the sequence of $n$-balls 
\[
\tilde B_i \doteqdot B_{|\tilde y_i|}(0) \cap \{x_{n+1} = 0\}
\]
sweeps out $\{x_{n+1} = 0\}$ as $i \to \infty$. For each $i \in \mathbb{N}$ let 
\[
\tilde P_i \doteqdot  \{x \in \mathbb{R}^{n+1}: x_{n+1} = 0, \; x \cdot \tfrac{\tilde y_i}{|\tilde y_i|} = |\tilde y_i| - L\}.
\]
Then $\dist(\tilde P_i, \tilde y_i) = L$, and the sequence of $(n-1)$-balls $\tilde P_i \cap \tilde B_i$ sweeps out $\tilde P_i$ as $i \to \infty$. To ease notation, for each $i \in \mathbb{N}$ let us perform a rotation (leaving $e_{n+1}$ fixed) and translation of coordinates so that $\tilde y_i$ coincides with $L e_n$. Then, in the new coordinates, $\tilde P_i$ passes through the origin and is spanned by the vectors $\{e_1, \dots, e_{n-1}\}$, and $\tilde B_i$ is given by
\[\tilde B_i = B_{|\tilde y_i|}(-(|\tilde y_i| - L) e_n)\cap \{x_{n+1} = 0\}.\]
Moreover, for each $1 \leq k \leq n-1$, the points 
\[\tilde u_{k,i} \doteqdot \tilde B_i \cap  \mathbb{R}_- e_k, \qquad \tilde v_{k,i} \doteqdot \tilde B_i \cap \mathbb{R}_+ e_k\]
satisfy $|\tilde u_{k,i}| = |\tilde v_{k,i}|\to \infty$ as $i \to \infty$. More explicitly, we have 
\[|\tilde u_{k,i} |^2 = |\tilde v_{k,i}|^2 = |\tilde y_i|^2 - (|\tilde y_i|-L)^2= 2L|\tilde y_i| - L^2.\]

Since $\tilde u_{k,i}$ and $\tilde v_{k,i}$ are in $\tilde B_i \subset \tilde \Omega_{t_i}$ there are corresponding points $u_{k,i}$ and $v_{k,i}$ in $\Omega_{t_i}$ such that
\[\pi(u_{k,i}) = \tilde u_{k,i}, \qquad \pi(v_{k,i}) = \tilde v_{k,i}.\]
Let $\rho_{k,i}$ denote the segment joining $u_{k,i}$ to $v_{k,i}$. Then since $\Omega_{t_i}$ is convex we certainly have 
\[\rho_{k,i} \subset \Omega_{t_i}.\]
Also, since $\pi(\rho_{k,i}) \to \mathbb{R} e_k$ as $i \to \infty$, and $\rho_{k,i} \subset \{|x_{n+1}| \leq \beta\}$, $\rho_{k,i}$ converges (after passing to a subsequence) as $i \to \infty$ to a line $\rho_k=b_k e_{n+1} + \mathbb{R}e_k$, where $|b_k| \leq \beta$. In particular,
\[
\theta_{k,i} \doteqdot \frac{v_{k,i} - u_{k,i}}{|v_{k,i} - u_{k,i}|} \cdot e_{n+1} \to 0, \qquad i \to \infty.
\]

Appealing to Lemma \ref{lem:pp} we find a sequence of times $\tau_i \leq 0$ and points $p_i \in M_{\tau_i}$ with the following properties:
\begin{itemize}
 \item[--] $(p_i, \tau_i) \in B_{2/\varepsilon}(q_i) \times [t_i- 2/\varepsilon^{2}, t_i]$.
\item[--] $H(p_i, \tau_i )\geq \varepsilon$. 
\item[--] $H(p,t) \leq 2 H(p_i,\tau_i)$ for all $(p,t) \in B_{r_i}(p_i) \times [\tau_i - r_i^2, \tau_i]$, where $r_i \doteqdot H(p_i,\tau_i)^{-1}$. 
\end{itemize}
Let $T_{k,i} \subset \Omega_{\tau_i}$ denote the convex hull of $\{p_i, u_{k,i}, v_{k,i}\}$, and write $z_{k,i}$ for the unique point in $\rho_{k,i}\cap \mathbb{R}e_{n+1}$.  
Since we have
\begin{align*}
|p_i-z_{k,i}|&\leq |p_i - q_i| + |q_i - y_i| + |y_i - z_{k,i}|\\
& \leq 2\varepsilon^{-1} + 4\beta + \sqrt{L^2 + 4\beta^2} < \infty,
\end{align*}
and $\rho_{k,i} \to \rho_k$, as $i \to \infty$,
\[\alpha_{k,i}\doteqdot \frac{u_{k,i} - p_i}{|u_{k,i}-p_i|} \cdot \frac{v_{k,i} - p_i}{|v_{k,i} - p_i|} \to -1.\]

We are finally ready to rescale. Consider the sequence of convex ancient flows $\{M^i_t\}_{t\in(-\infty,-\tau_ir_i^{-2}]}$ defined by
\[
M_t^i \doteqdot r_i^{-1}(M_{\tau_i + r_i^2 t} - p_i)\,.
\]
Since $H^i(0,0) = 1$ and $H^i \leq 2$ in $B_1(0) \times [-1,0 ]$, Proposition \ref{prop:compactness} implies that $\{M_t^i\}_{t \in (-\infty, 0]}$ subconverges in $C^\infty_{\loc}(\mathbb{R}^{n+1} \times (-\infty,0])$ to a limiting solution $\{M_t'\}_{t \in (-\infty,0]}$.

Since the region $\Omega_0^i$ enclosed by $M^i_0$ contains $r_i^{-1} (T_{k,i} - p_i)$ for all $1 \leq k \leq n-1$ and $i \in \mathbb{N}$, the region $\Omega_0'$ enclosed by $M'_0$ contains each of the sets 
\[T_k \doteqdot \lim_{i \to \infty} r_i^{-1} (T_{k,i} - p_i).\] In particular, since $\theta_{k,i} \to 0$ and $\alpha_{k,i} \to -1$ (note each of these quantities is scaling-invariant), and the rescaling factors satisfy $r_i^{-1} \geq \varepsilon$, we have
\[\mathbb{R}e_k \subset T_k \subset \Omega_0'\,.\]
It follows that $\Omega_t'$ splits off the line $\mathbb{R}e_k$.  
This is true for all $1 \leq k \leq n-1$, so 
\[M_t' = \mathbb{R}^{n-1}\times \Gamma_t\]
for all $t \leq 0$, where $\{\Gamma_t\}_{t \in (-\infty,0]}$ is a convex ancient solution of curve-shortening flow in $\mathbb{R}^2$. 

Noting that $\tilde \Omega_{t_i}$ contains the point $-|\tilde y_i| e_n$, the same kind of reasoning we used to show that $\mathbb{R} e_k \subset \Omega_0'$ can be employed to obtain $\mathbb{R}_- e_n \subset \Omega_0'$. In particular, $\Gamma_0$ encloses $\mathbb{R}_- e_n$, and we conclude from the classification of convex ancient solutions to curve shortening flow \cite{BLT3} that $\{\Gamma_t\}_{t\in(-\infty,0]}$ is the Grim Reaper.

We now extend the convergence to $t \in (-\infty, \infty)$. Let $B_{2r}$ be an open ball in $\Omega_0'$, and define
\[
\bar t_i \doteqdot \sup\{t\in(-\infty, -\tau_i r_i^{-2}]: B_{r} \subset \Omega_t^i\}, \qquad \bar t \doteqdot \liminf_{i \to \infty} \bar t_i.
\]
Recall that $-\tau_i r_i^{-2} \to \infty$. Suppose for a contradiction that $\bar t < \infty$. Using the avoidance principle and Proposition \ref{prop:compactness}, we can pass to a further subsequence such that $\Omega_t^i$ subconverges in $C^\infty_{\loc}(\mathbb{R}^{n+1} \times (-\infty, \bar t\,])$. We know that for all $t \leq \bar t$, the limit $M_t'$ is the product of $\mathbb{R}^{n-1}$ with a convex ancient solution of curve shortening flow which is noncompact, and hence a Grim Reaper by \cite{BLT3}. That is, $M_t' = \mathbb{R}^{n-1} \times \Gamma_{t-\bar t}$ for all $t \leq \bar t$. In particular, $B_{2r} \subset \Omega_{\bar t}'$, but this contradicts the avoidance principle and definition of $\bar t$. We thus conclude that $\bar t = \infty$, in which case Proposition \ref{prop:compactness} can be used to extract a subsequence such that $M_t^i$ converges to a Grim hyperplane in $C^\infty_{\loc}(\mathbb{R}^{n-1} \times (-\infty,\infty))$. 
\end{proof}

To conclude this section we prove Theorem \ref{thm:Reapers}. 

\begin{proof}[Proof of Theorem \ref{thm:Reapers}]
By Theorem \ref{thm:noncollapsing equivalence}, \eqref{cond:collapsing} implies \eqref{cond:not entire}. Corollary 2.2 in \cite{Wa11} asserts that \eqref{cond:not entire} implies \eqref{cond:slab}.

Let $M_t = \partial \Omega_t$. If \eqref{cond:slab} holds, for every fixed $t <0$, a barrier argument as in Lemma \ref{lem:slabsweep} shows that if $r_i \to \infty$ then  $r_i \Omega_{r_i^{-2} t}$ converges to a hyperplane in the Hausdorff topology. Therefore, by the Ecker--Huisken interior estimates for graphs and the Arzel\`a--Ascoli theorem, $\{r_i^{-1} M_{r_i^2 t}\}_{t\in(-\infty,0]}$ converges locally uniformly in the smooth topology to a multiplicity-two hyperplane. So \eqref{cond:slab} implies \eqref{cond:some blowdown}. 

On the other hand, if $\{r_i^{-1} M_{r_i^2 t}\}_{t\in(-\infty,0]}$ converges to a multiplicity-two hyperplane as $r_i \to \infty$, Theorem \ref{prop:slab} implies that $\{r_i^{-1} M_{r_i^2 t}\}_{t\in(-\infty,0]}$ lies in a slab for all sufficiently large $i$. Appealing to Lemma \ref{lem:slabsweep} we conclude that \eqref{cond:some blowdown} implies \eqref{cond:slab}. 

We showed in Proposition \ref{prop:grim_plane} that \eqref{cond:slab} implies \eqref{cond:Grim}, and this implies \eqref{cond:collapsing} since the Grim Reaper is collapsing. This completes the proof.
\end{proof}

\section{Singularity formation in immersed or high codimension flows}\label{sec:singularities}

Finally, we show that curvature pinching can be used to rule out collapsing singularity models.

\begin{proof}[Proof of Corollary \ref{cor:n-1 convex}]
The Huisken--Sinestrari convexity estimate \cite{HuSi99b} implies that the second fundamental form $A$ is nonnegative, so we have $|A|^2 \leq H^2$. If $|A|^2 = H^2$ at an interior spacetime point then, applying the strong maximum principle to the inequality
\[(\partial_t - \Delta) \frac{|A|^2}{H^2} \leq \frac{2}{H} \left\langle \nabla H, \nabla \frac{|A|^2}{H^2} \right \rangle, \] 
we conclude that $|A|^2 \equiv H^2$, which is only possible if all but one of the principal curvatures vanish identically. This is ruled out by uniform $(n-1)$-convexity, so $|A|^2 < H^2$. Sacksteder's theorem \cite{Sacksteder} now implies that $M_t$ bounds some convex body $\Omega_t$ for each $t\leq0$.

Uniform $(n-1)$-convexity rules out asymptotic Grim hyperplanes, so, by Theorem \ref{thm:Reapers}, the domains $\Omega_t$ sweep out $\mathbb{R}^{n+1}$. Theorem \ref{thm:noncollapsing equivalence} now implies the desired noncollapsing property.
\end{proof}

The proof of Corollary \ref{cor:pinched high codim} is analogous. 
\begin{proof}[Proof of Corollary \ref{cor:pinched high codim}]
Under the quadratic pinching hypothesis, which is preserved by mean curvature flow \cite{AnBa10}, blow-ups are codimension one \cite{NaffPlanarity} and convex \cite{LynchNguyenConvexity}. In codimension one, the inequality $|A|^2 \leq \tfrac{4}{3n} |H|^2$ implies uniform $(n-1)$-convexity, so the claim follows as above.
\end{proof}

\appendix

\section{Proof of the slab estimate}

We include a proof of X.-J. Wang's slab estimate (Propsition \ref{prop:slab} of the present paper) for convex ancient mean curvature flows. The argument follows the scheme outlined by Wang, but differs in that it does not make use of concavity of the arrival time.  

For the remainder of this section we consider a fixed convex ancient mean curvature flow $\{M_t\}_{t \in (-\infty,0]}$, $M_t = \partial \Omega_t$, such that $0 \in \Omega_0$. We assume that $M_t$ is noncompact and of dimension $n \geq 2$ (although with minor modifications our arguments apply also to compact solutions and solutions with $n=1$). In light of the splitting theorem for convex solutions (see, for example, \cite[Theorem 9.11]{EGF}), we need only consider the case where $M_t$ is locally uniformly convex. In particular, $\Omega_0$ does not contain any lines, so we may choose an orthonormal basis $\{e_i\}$ for $\mathbb{R}^{n+1}$ such that, for all $t \leq 0$,
\begin{itemize}
\item[--] $\Omega_t$ contains $\mathbb{R}_- e_1 \doteqdot \{s e_1 : s \leq 0\}$.
\item[--] $M_t \cap \{x_{1} = 0\}$ is compact.
\end{itemize}

We introduce the notation
\[\mathcal M \doteqdot \{(p, \tau) \in \mathbb{R}^{n+1} \times [0,\infty): p \in M_{-\tau}\}\]
and observe that $\mathcal M = \mathcal M_+ \cup \mathcal M_-$, where the pieces $\mathcal M_+$ and $\mathcal M_-$ are characterised by the inequalities $\nu \cdot e_{n+1} \geq 0$ and $\nu\cdot e_{n+1} \leq 0$, respectively. Where there is no chance of confusion we write $\mathbb{R}^n$ for $\spa\{e_1, \dots, e_n\}$. If $U$ is the orthogonal projection of $\mathcal M$ onto $\mathbb{R}^n \times [0,\infty)$, then there are functions $g_{\pm}:U \to \mathbb{R}$ such that 
\[\mathcal M_+ = \Gaph g_+, \qquad \mathcal M_- = \Gaph g_-.\]
Since $\Omega_t$ is convex, for all $t\le 0$, $g_+(\cdot, \tau)$ is concave and $g_-(\cdot, \tau)$ convex for each $\tau \geq 0$. Hence $g \doteqdot g_+ - g_-$ is concave in space at each $\tau \geq 0$. We write $U_\tau \doteqdot \{x \in \mathbb{R}^n : (x,\tau) \in U\}$ and $B^n_r(x) \doteqdot \{y \in \mathbb{R}^n: |y-x|<r\}$. Where possible, we use $p$ and $q$ to denote points in $\mathbb{R}^{n+1}$, with $x$ and $y$ denoting points in $\mathbb{R}^n$. 

\subsection{Horizontal displacement bounds} 

Our first estimate is a bound for the horizontal displacement, which asserts that over time intervals where $g(0,\tau)$ and $Dg_{\pm}(0,\cdot)$ are controlled, the displacement of $M_{-\tau}$ from the origin grows linearly in $\tau$ in directions approximately orthogonal to $e_{n+1}$.
%
%

\begin{lemma}
\label{lem:width}
Fix $\tau_1 > \tau_0 $. If $g(0,\tau_1) \leq C$ and $|D g_{\pm} (0, \tau)|\leq 1$ for all $\tau \in [\tau_0, \tau_1]$, 
then
\[g_-(x, \tau_1) \leq b + v\cdot x \leq g_+(x, \tau_1)\;\;\text{for all}\;\;x \in B^n_{\mu (\tau_1 - \tau_0)/C}(0)\,,\]
where $\mu = \mu(n)$, $b\doteqdot\tfrac{1}{2}( g_+(0, \tau_1) + g_-(0, \tau_1))$, and $v\doteqdot\sum_{k=2}^n D_k g_+ (0, \tau_1) e_k$.
\end{lemma}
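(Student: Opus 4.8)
The plan is to reformulate the sandwich condition as a statement about two concave functions and then patch together local lower bounds obtained near the ``free boundary'' of the slab. Write $\ell(x):=b+v\cdot x$; this affine function is chosen precisely so that $\ell(0)=\tfrac12\big(g_+(0,\tau_1)+g_-(0,\tau_1)\big)$ is the midpoint of the segment $[g_-(0,\tau_1),g_+(0,\tau_1)]$ and so that $D\ell=v$ is the component of $Dg_+(0,\tau_1)$ orthogonal to $e_1$. Setting $h_+:=g_+(\cdot,\tau_1)-\ell$ and $h_-:=\ell-g_-(\cdot,\tau_1)$, the assertion becomes $h_\pm\ge 0$ on $B^n_r(0)$ with $r=\mu(\tau_1-\tau_0)/C$. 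Here $h_\pm$ are concave on $U_{\tau_1}$ (concave, resp.\ minus convex, minus affine), $h_\pm(0)=\tfrac12 g(0,\tau_1)\ge 0$, and $Dh_+(0)=D_1g_+(0,\tau_1)\,e_1$ has norm $\le 1$ by hypothesis (and $|Dh_-(0)|\le 3$ similarly). Since a concave function that is nonnegative at the vertices of a simplex is nonnegative on that simplex, it suffices to produce, for finitely many directions $e$ that positively span $\mathbb{R}^n$, points $s_e e$ with $s_e\gtrsim(\tau_1-\tau_0)/C$ at which $h_+(s_e e)\ge 0$ and $h_-(s_e e)\ge 0$: the ball $B^n_r(0)$ will then lie in the convex hull of the origin and these points.

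The first (and main) step is to show that the slab region is wide at time $\tau_1$: along every horizontal direction $e$ the free boundary $\partial U_{\tau_1}$ (the locus $g_+=g_-$) lies at distance $\rho_e\gtrsim(\tau_1-\tau_0)/C$ from the origin, and in particular $B^n_{c(\tau_1-\tau_0)/C}(0)\subset U_{\tau_1}$. I would prove this with the avoidance principle. The hypotheses $g(0,\tau)\le C$ and $|Dg_\pm(0,\tau)|\le 1$ on the \emph{entire} interval $[\tau_0,\tau_1]$, combined with concavity of $g_+$ and convexity of $g_-$ (and, if necessary, an Ecker--Huisken-type interior estimate), confine the portion of $\{M_{-\tau}\}_{\tau\in[\tau_0,\tau_1]}$ near the $e_1$-axis to a slab of width $\asymp C$ about the horizontal plane $\{x_{n+1}=b\}$. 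If $\partial U_{\tau_1}$ came within distance $\delta$ of the origin in some horizontal direction $e$, one could place at time $-\tau_1$ a Grim hyperplane barrier asymptotic to a slab of width $\asymp C$ about $\{x_{n+1}=b\}$, with axis parallel to $e$ and tip just outside $\Omega_{-\tau_1}$; a Grim hyperplane in a width-$w$ slab translates with speed $\asymp 1/w$, so this barrier would sweep past the origin within time $\asymp C\delta$. By the avoidance principle $\Omega_{-\tau}$ stays inside the convex region bounded by the barrier, so $C\delta<\tau_1-\tau_0$ would force $0\notin\Omega_{-\tau_0}$, contradicting $0\in\Omega_0\subset\Omega_{-\tau_0}$. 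Hence $\delta\gtrsim(\tau_1-\tau_0)/C$; the parabolic rescaling $x\mapsto C^{-1}x$, $\tau\mapsto C^{-2}\tau$ confirms that $(\tau_1-\tau_0)/C$ is indeed the correct scale.

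With the width bound in hand, the second step is a one-variable argument along each horizontal ray $s\mapsto se$. Since $g_+=g_-$ at $s=\rho_e$, the functions $s\mapsto g_+(se,\tau_1)-v\cdot(se)$ (concave) and $s\mapsto g_-(se,\tau_1)-v\cdot(se)$ (convex) agree at $s=\rho_e$, and the slope bound $|Dg_\pm(0,\tau_1)|\le 1$ controls their derivatives at $s=0$; running the elementary one-variable convexity estimate (of the kind that compares a concave function with the chord through its value at $0$ and the contact point) then gives $h_+(se)\ge 0$ and $h_-(se)\ge 0$ for $0\le s\le\tfrac12\rho_e$, i.e.\ at a point a distance $\tfrac12\rho_e\gtrsim(\tau_1-\tau_0)/C$ along $e$. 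This supplies the required far-away nonnegativity points in every horizontal direction except $-e_1$, where $\mathbb{R}_-e_1\subset\Omega_{-\tau_1}$ makes $\rho_{-e_1}=\infty$ and one argues directly from the concavity of $h_\pm$ along the $e_1$-axis together with the gradient bound at the origin. Patching via concavity of $h_\pm$ as in the first paragraph yields $h_\pm\ge 0$ throughout $B^n_{\mu(\tau_1-\tau_0)/C}(0)$ for a suitable $\mu=\mu(n)$.

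The main obstacle is the width estimate of Step~1: one needs a lower bound on the extent of the slab region that is genuinely \emph{linear} in $\tau_1-\tau_0$, with the sharp $C^{-1}$ factor, and the barrier one compares against is legitimate only after controlling $g_\pm$ --- the width $g$ \emph{and} the slopes $Dg_\pm$ --- not just at the origin but on a horizontal neighborhood of size comparable to $\tau_1-\tau_0$. Propagating the pointwise hypotheses at the origin to such a neighborhood, using convexity of the domains, the evolution equation, and the fact that the hypotheses hold on all of $[\tau_0,\tau_1]$, is the delicate part. A secondary subtlety is handling directions $e$ in which the slab widens rapidly away from the origin, where $\rho_e$ must be estimated in tandem with the growth of $g$ (which is capped because the whole flow lies in a fixed strip).
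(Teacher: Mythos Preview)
Your outline is structurally on target --- reduce to showing $h_\pm\ge 0$ at far-away points along each coordinate ray, then patch by concavity --- and your Step~2 (the one-variable concave/convex argument along each ray $s\mapsto se_k$) and the convex-hull patching are essentially what the paper does. The genuine gap is in Step~1, the width estimate.

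Your proposed mechanism for the width bound is a Grim hyperplane barrier plus avoidance. As you yourself flag, this requires knowing that $\Omega_{-\tau}$ lies in a slab of width $\asymp C$ on a region of horizontal extent $\asymp(\tau_1-\tau_0)/C$, since a Grim hyperplane is asymptotic to an \emph{entire} pair of parallel hyperplanes and the avoidance principle is a global comparison. The hypotheses, however, bound $g$ and $Dg_\pm$ only at the single point $0$; concavity propagates this to a neighborhood of size $\asymp C$, not $\asymp(\tau_1-\tau_0)/C$. You identify this as ``the delicate part'' but offer no way to close it, and there does not appear to be one: Lemma~\ref{lem:width} is used precisely to \emph{establish} slab control, so assuming it is circular.

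The paper sidesteps this entirely. For each $2\le k\le n$ it passes to the two-dimensional slice $\tilde M_t = M_t\cap\operatorname{span}\{e_k,e_{n+1}\}$, which is a \emph{compact} convex curve and, by Lemma~\ref{lem:slice_subsols}, a subsolution of curve-shortening flow. The gradient bound $|D_kg_\pm(0,\tau)|\le 1$ controls the turning of the outward normal at the two points where $\tilde M_{-\tau}$ crosses the $e_{n+1}$-axis, so (by Gauss--Bonnet) the area $\alpha_\pm(t)$ enclosed by each half $\tilde M_t\cap\{\pm x_k\ge 0\}$ satisfies $\dot\alpha_\pm\le -\pi/2$ throughout $[-\tau_1,-\tau_0]$. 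Thus $\alpha_\pm(-\tau_1)\ge\tfrac{\pi}{2}(\tau_1-\tau_0)$, and since the height of each half at $x_k=0$ is $g(0,\tau_1)\le C$, convexity (enclosing in a rectangle) forces the horizontal extents $\rho,\sigma$ to satisfy $\rho,\sigma\gtrsim(\tau_1-\tau_0)/C$. The point is that the area-decrease rate is a \emph{local} quantity depending only on the normal directions at the cut, so no global slab containment is needed. Replace your Grim barrier with this area argument and the rest of your outline goes through.
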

\begin{proof} 
First, let us relabel axes so that $D_k g(0,\tau_1) \leq 0$ for all $2 \leq k \leq n$. Define
\[\tilde g_{\pm}(x,\tau) \doteqdot g_{\pm}(x,\tau) - v \cdot x, \]
where $v\doteqdot \sum_{k=2}^n D_k g_+ (0, \tau_1) e_k$, so that
\[D_k \tilde g_+(0, \tau_1) = 0\;\;\text{for}\;\; 2 \leq k \leq n.\]

Fix $k\in\{2,\dots, n\}$ and set $\Sigma \doteqdot \spa\{e_k, e_{n+1}\}$. We are assuming $0 \in \Omega_0$, so $M_t \cap \Sigma$ bounds an open convex region in $\Sigma$ for all $t \leq 0$. By Lemma \ref{lem:slice_subsols} the curves $\tilde M_t \doteqdot M_t \cap \Sigma$ are smooth and move inward with normal speed no less than their curvature. Recall we chose $e_1$ to ensure $M_t \cap \{x_1 = 0\}$ is compact for all $t\leq 0$, so $\tilde M_t$ is compact. Writing $\tilde \nu$ for the outward unit normal to $\tilde M_t$ in $\Sigma$, the derivative bounds $|D_k g_{\pm}(0,\tau)| \leq 1$ imply
\[\tilde \nu(g_+(0,t) e_{n+1}, t) \cdot e_{n+1} \geq \frac{1}{\sqrt{2}}\;\;\text{and}\;\;\tilde \nu(g_-(0,t) e_{n+1}, t) \cdot e_{n+1} \leq - \frac{1}{\sqrt{2}}\]
for $-\tau_1 \leq t \leq - \tau_0$. Therefore, if we let $\alpha_{\pm}(t)$ denote the area enclosed by $\tilde M_t \cap \{\pm p \cdot e_k \geq 0\}$, we have $\dot \alpha_{\pm} \leq - \tfrac{\pi}{2}$ for $-\tau_1 \leq t \leq - \tau_0$, hence 
\begin{equation}
\label{eq:width_area}
\alpha_{\pm}(-\tau_1) \geq \alpha_{\pm}(-\tau_0) + \frac{\pi}{2} (\tau_1 - \tau_0) \geq \frac{\pi}{2} (\tau_1 - \tau_0) \doteqdot  L.
\end{equation}

Define
\[
\rho\doteqdot \sup_{p \in \tilde M_{-\tau_1}} p \cdot e_k\,\,\text{ and }\,\,- \sigma \doteqdot \inf_{p \in \tilde M_{-\tau_1}} p \cdot e_k\,.
\]
Set $B=g(0, \tau_1)-\s D_kg(0,\tau_1)$. Since $D_k g(0, \tau_1)\le 0$, we find that $B\ge g(0, \tau_1)$ and, by enclosing $\tilde M_t \cap \{ p \cdot e_k \geq 0\}$ with a suitable rectangle,
\[
\a_+\le g(0, \tau_1)\rho\,.
\]
Moreover, since $\s\ge \frac{B-g(0, \tau_1)}{g(0, \tau_1)}\rho$, we either have $\s\ge \rho$ or
\[
\a_-\le \frac{B+g(0, \tau_1)}{2}\sigma\le\frac{3}{2} g(0, \tau_1)\s\,.
\]
We conclude that $\rho, \s\ge \frac{L}{2 C}$.

We claim that
\begin{equation}\label{eq:horizontal displacement first case}
\tilde g_+(se_k, \tau_1) \geq b\;\;\text{for}\;\; 
s \in [0, \tfrac{L}{4C}]\,,
\end{equation}
where $b \doteqdot \tfrac{1}{2}( g_+(0, \tau_1) + g_-(0, \tau_1))$.

Indeed, since the function $s\mapsto \tilde g_+(se_k,\tau_1)$ is concave, for $0 \leq s \leq \rho$ we may estimate
\bann
\tilde g_+(se_k,\tau_1)\ge{}& \left(1-\frac s\rho\right)\tilde g_+(0,\tau_1)+\frac{s}{\rho}\tilde g_+(\rho e_k,\tau_1)\\
={}& \left(1-\frac s\rho\right)g_+(0,\tau_1)+\frac{s}{\rho}\tilde g_-(\rho e_k,\tau_1)\,.
\eann
Next, observe that, since the function $s\mapsto \tilde g_-(se_k,\tau_1)$ is convex and satisfies $\frac{d}{ds} \tilde g_-(s e_k, \tau_1)\big|_{s=0}=D_k\tilde g_-(0,\tau_1)\ge 0$, it  must be monotone non-decreasing for $s\in [0,\rho]$. Thus,
\bann
\tilde g_+(se_k,\tau_1)\ge{}&g_+(0,\tau_1)-\frac{s}{\rho}\left(g_+(0,\tau_1)-\tilde g_-(0,\tau_1)\right)\\
={}&g_+(0,\tau_1)-\frac{s}{\rho}\left(g_+(0,\tau_1)-g_-(0,\tau_1)\right)\,.
\eann
This yields \eqref{eq:horizontal displacement first case} since, for $s\in [0, \frac\rho2]$, the right hand side is minimized at $s=\frac\rho 2$ (and $\rho\ge \frac{L}{2C}$). 

Since $\frac{d}{ds} \tilde g_+(s e_k, \tau_1)\big|_{s=0}=0$, 
essentially the same argument yields the estimate
\begin{equation}\label{eq:horizontal displacement second case}
\tilde g_-(se_k, \tau_1) \leq b\;\; \text{for}\;\; s \in [-\tfrac{L}{4C}, \tfrac{L}{4C}]\,.
\end{equation}

Finally, to bound $\tilde g_+(s e_k, \tau_1)$ for $s<0$, we argue as follows.
Since $s\mapsto \tilde g_-(s e_k, \tau_1)$ is convex,
\bann
\tilde g_-(0, \tau_1)-\tilde g_-(-\sigma e_k, \tau_1)\le{}& \frac{\sigma}{\rho} (\tilde g_-(\rho e_k, \tau_1)-\tilde g_-(0,\tau_1))\\
={}& \frac{\sigma}{\rho} (\tilde g_+(\rho e_k, \tau_1) - g_-(0, \tau_1))\\
\le{}& \frac{\sigma}{\rho} (\tilde g_+(0, \tau_1) - g_-(0, \tau_1))\\
={}& \frac{\s}{\rho}g(0, \tau_1),
\eann
and, since $s\mapsto \tilde g_+(s e_k, \tau_1)$ is concave, we have for $s\in[-\sigma,0]$ 
\[
\begin{split}
\tilde g_+(s e_k, \tau_1)&\ge \left(1+\frac s\sigma\right)\tilde g_+(0, \tau_1)-\frac{s}{\sigma}\tilde g_+(-\sigma e_k, \tau_1)\\
&= \left(1+\frac s\sigma\right)g_+(0, \tau_1)-\frac{s}{\sigma}\tilde g_-(-\sigma e_k, \tau_1)\\
&\ge \left(1+\frac s\sigma\right)g_+(0, \tau_1)+\frac{s}{\rho}g(0, \tau_1)-\frac{s}{\sigma}\tilde g_-(0,\tau _1)\\
&=b+\left(\frac 12+\frac s\sigma+\frac s\rho\right) g(0, \tau_1)\,.
\end{split}
\]
Since $\min\{\rho,\sigma\}\ge \frac{L}{2C}$, we conclude that
\begin{equation}\label{eq:horizontal displacement third case}
\tilde g_+(s e_k, \tau_1)\ge b\;\;\text{for}\;\; s\in[-\tfrac{L}{8C},0]\,.
\end{equation}

To summarize, \eqref{eq:horizontal displacement first case}--\eqref{eq:horizontal displacement third case} show that, for every integer $2 \leq k \leq n$,
\[\tilde g_-(se_k, \tau_1) \leq b \leq \tilde g_+(se_k, \tau_1)\;\;\text{for}\;\; |s| \leq \tfrac{L}{8C}.\]
Finally, we observe that a similar argument yields
\[g_-(se_1, \tau_1) \leq b \leq g_+(se_1, \tau_1)\;\;\text{for}\;\; |s| \leq \tfrac{L}{8C}.\]
In fact, this case is a little simpler, since we arranged that $\mathbb R_-e_1 \subset \Omega_{t}$ for $t \leq 0$, which ensures that $g_+(se_1, \tau_1)$ and $g_-(se_1,\tau_1)$ are nonincreasing and nondecreasing, respectively. 

Since $\tilde g_{\pm}(se_1, \tau_1) = g_\pm (se_1, \tau_1)$, we finally conclude that
\[\tilde g_-(x, \tau_1) \leq b \leq \tilde g_+ (x, \tau_1) \;\;\text{for}\;\; x \in T,\]
where $T$ is the convex hull of $\{\pm \tfrac{ L}{8C} e_k\}_{1 \leq k \leq n}$. Choosing $\mu = \mu(n)$ such that $B^n_{2\mu L/\pi C}(0) \subset T$ and inserting the definition of $L$ we arrive at
\[\tilde g_-(x, \tau_1) \leq b \leq \tilde g_+ (x, \tau_1)\;\;\text{for}\;\; x \in B^n_{\mu (\tau_1 - \tau_0)/C}(0),\]
which is to say 
\[g_-(x,\tau_1) \leq b + v \cdot x \leq g_+(x, \tau_1)\;\;\text{for}\;\;  x \in B^n_{\mu (\tau_1 - \tau_0)/C}(0).\qedhere\]
\end{proof}

Using Lemma \ref{lem:width} we now establish a result of a similar nature under different hypotheses. This is a key ingredient in the base step of the induction argument used to prove Proposition \ref{prop:slab}.

\begin{lemma}
\label{lem:width_2}
There are constants $\beta = \beta(n)> 0$ and $\Lambda = \Lambda(n) < \infty$ with the following property. If $M_{-1} \cap B_R(0) \subset \{|x_{n+1}| \leq \beta\}$ and $R \geq \Lambda \beta^{-1}$, then there exist $x_0 \in B^n_R(0)$, $b\in \R$, and $v\in \R^n$ with $\vert v\vert\le 1/2$ such that
\[g_-(x,1) \leq b + v\cdot (x-x_0) \leq g_+(x,1)\;\;\text{for every}\;\; x \in B^n_{\mu /4 \beta}(x_0)\,.\]
\end{lemma}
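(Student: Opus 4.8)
The plan is to reduce to Lemma \ref{lem:width}, applied with $\tau_0=\tfrac12$, $\tau_1=1$ and $C=2\beta$: this produces exactly the asserted sandwich over a ball of radius $\mu(\tau_1-\tau_0)/C=\mu/(4\beta)$. To verify its hypotheses we must locate a point $x_0$, which we then relabel as the origin, with $g(x_0,1)\le 2\beta$ and $|Dg_\pm(x_0,\tau)|\le 1$ for all $\tau\in[\tfrac12,1]$. First, since $\Omega_{-\tau}\subset\Omega_{-1}$ for $\tau\in[0,1]$, the slab containment propagates: $\Omega_{-\tau}\cap B_R(0)\subset\{|x_{n+1}|\le\beta\}$ for every $\tau\in[0,1]$. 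Running the segment from the interior point $0\in\Omega_{-\tau}$ to a boundary point $(x,g_\pm(x,\tau))$ and using that the portion of this segment lying in $B_R(0)$ lies in the slab, one gets $|g_\pm(x,\tau)|\le\beta$, hence $g(x,\tau)\le 2\beta$, for all $\tau\in[0,1]$ and all $x$ in $\pi(\Omega_{-\tau})\cap B^n_{R/2}(0)$, where $\pi$ denotes orthogonal projection onto $\{x_{n+1}=0\}$. Consequently, if we can find $x_0\in B^n_{R/8}(0)$ with $B^n_{4\beta}(x_0)\subset\pi(\Omega_{-1/2})$, then (since $\pi(\Omega_{-1/2})\subset\pi(\Omega_{-\tau})$ for $\tau\in[\tfrac12,1]$) concavity of $g_+(\cdot,\tau)$ and convexity of $g_-(\cdot,\tau)$, together with $|g_\pm|\le\beta$ on $B^n_{4\beta}(x_0)$, give $|Dg_\pm(x_0,\tau)|\le 2\beta/(4\beta)=\tfrac12$ for all $\tau\in[\tfrac12,1]$; feeding this into Lemma \ref{lem:width} and translating back yields the lemma with $v\doteqdot\sum_{k\ge 2}D_kg_+(x_0,1)e_k$ (so $|v|\le|Dg_+(x_0,1)|\le\tfrac12$) and $b\doteqdot\tfrac12(g_+(x_0,1)+g_-(x_0,1))$.

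Everything thus comes down to producing a ball of radius $4\beta$ inside $\pi(\Omega_{-1/2})$ centred near the origin. By John's theorem (a convex body in $\R^n$ of minimal width $w$ contains a ball of radius $\ge c(n)w$), it suffices to show that the convex set $W\doteqdot\pi(\Omega_{-1/2}\cap B_{R/8}(0))$ has minimal width at least $C(n)\beta$, where $C(n)$ is chosen so that $c(n)C(n)\ge 4$. This is the heart of the matter: we must rule out that $\Omega_{-1/2}$ is ``thin in two directions'' over $B_R(0)$. Suppose $W$ had width less than $C(n)\beta$ in some unit direction $e\perp e_{n+1}$. If $e=\pm e_1$ this contradicts $\mathbb R_-e_1\subset\Omega_{-1/2}$. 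Otherwise set $\Sigma\doteqdot\spa\{e,e_{n+1}\}$, and note that the slice $\gamma_t\doteqdot M_t\cap\Sigma$ is a \emph{compact} convex curve for all $t\le 0$: indeed, $\mathbb R_-e_1\subset\Omega_t$, compactness of $M_t\cap\{x_1=0\}$, convexity and local uniform convexity force $\mathcal T_\infty\Omega_t=\mathbb R_-e_1$ (each cross-section $\Omega_t\cap\{x_1=c\}$ is bounded by a $t$-dependent constant, so $\Omega_t$ has no unbounded direction other than $-e_1$), and $-e_1\notin\Sigma$ since $e\ne\pm e_1$. By Lemma \ref{lem:slice_subsols}, $\{\gamma_t\}_{t\in(-\infty,0]}$ is a subsolution of curve shortening flow. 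Combining the width bound in direction $e$ with the slab bound in direction $e_{n+1}$, a comparison/convexity estimate shows the now-bounded slice $\Omega_{-1/2}\cap\Sigma$ is pinned within distance $\beta$ of the axes, hence $\gamma_{-1/2}$ encloses area less than $4\beta^2$. Since the enclosed area $\mathcal A(t)$ of a convex curve-shortening subsolution satisfies $\mathcal A'(t)\le-\int\kappa\,ds=-2\pi$, this gives $\mathcal A(0)<4\beta^2-\pi<0$ once $\beta$ is small, contradicting $\mathcal A(0)>0$ (which holds as $0\in\Omega_0\cap\Sigma$). Hence $W$ has minimal width at least $C(n)\beta$, supplying the ball.

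The constants are then chosen in the order $C(n)$ (from John's theorem), $\beta=\beta(n)$ small (so $4\beta^2<\pi$ and the convexity/comparison estimates have room), $\Lambda=\Lambda(n)$ large (so $R\ge\Lambda\beta^{-1}$ clears every dimensional threshold above). The main obstacle is the middle step: ruling out two-directional collapse of $\Omega_{-1/2}$ over $B_R(0)$. The mechanism — slice by a $2$-plane containing $e_{n+1}$, use $\mathcal T_\infty\Omega_t=\mathbb R_-e_1$ to guarantee the slice is compact, then combine Gauss--Bonnet with the subsolution property of Lemma \ref{lem:slice_subsols} — mirrors the area-decay argument inside the proof of Lemma \ref{lem:width}, so the two results share their essential idea; the remainder (the segment-to-origin bounds on $g_\pm$, the inclusions $B^n_{4\beta}(x_0)\subset\pi(\Omega_{-1/2})\subset\pi(\Omega_{-\tau})$, and John's theorem) is routine bookkeeping.
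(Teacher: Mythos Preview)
Your overall plan---locate $x_0$ with $|Dg_\pm(x_0,\tau)|\le \tfrac12$ on $[\tfrac12,1]$ via an inscribed ball, then invoke Lemma~\ref{lem:width} with $C=2\beta$---matches the paper exactly. The difference is in how the ball is produced: the paper slices only by $\Sigma_k=\spa\{e_k,e_{n+1}\}$ for $2\le k\le n$, uses the area decay of $\Omega_t\cap\Sigma_k$ (which is compact because $\Sigma_k\subset\{x_1=0\}$) together with the $2\beta$ height bound to get extent $\sim\beta^{-1}$ in each $e_k$-direction, and hence a ball of radius $c(n)/\beta$ inside $U_{1/2}$; you instead aim for the minimal-width direction and a ball of radius $4\beta$ via John's theorem.

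The gap is in your compactness claim. You assert $\mathcal T_\infty\Omega_t=\mathbb R_-e_1$, arguing that ``each cross-section $\Omega_t\cap\{x_1=c\}$ is bounded by a $t$-dependent constant.'' What the setup actually gives is only that $\Omega_t\cap\{x_1=0\}$ is bounded; combined with $\mathbb R_-e_1\subset\Omega_t$ this forces $\mathcal T_\infty\Omega_t\setminus\{0\}\subset\{x_1<0\}$, but the cross-sections for $c\ll 0$ need not be uniformly bounded, and the recession cone may well be a higher-dimensional cone inside $\{x_1\le 0\}$ (think of a strictly convex region asymptotic to a wedge). Once the minimal-width direction $e$ has a nonzero $e_1$-component, $\Sigma=\spa\{e,e_{n+1}\}$ meets $\{x_1<0\}$ in a half-plane, and $\Omega_t\cap\Sigma$ can be noncompact---so your Gauss--Bonnet area bound and the ``enclosed area $<4\beta^2$'' step both break down. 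The paper sidesteps this entirely by never leaving $\{x_1=0\}$. A clean fix is to drop the min-width/John route and instead show, for each $k\ge 2$, that the width of $\pi(\Omega_{-1/2})$ in the $e_k$-direction is $\gtrsim\beta^{-1}$ using the compact slice $\Sigma_k$; together with the segment $[-\tfrac{R}{8}e_1,0]\subset W$ this yields an inscribed simplex and hence the required ball---which is precisely the paper's argument.

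A secondary looseness: the step ``the portion of this segment lying in $B_R(0)$ lies in the slab'' is not immediate from $M_{-1}\cap B_R(0)\subset\{|x_{n+1}|\le\beta\}$, since this hypothesis concerns the boundary, not $\Omega_{-1}$. One has to first argue that each component of $B_R(0)\cap\{|x_{n+1}|>\beta\}$ lies in the complement of $\Omega_{-1}$ (the alternative---that it lies entirely inside $\Omega_{-1}$---can be excluded, but this needs a sentence). The paper is also terse here, but its formulation (``$R\ge 2(1+c)\beta^{-1}$ ensures $|g_\pm(\cdot,1)|\le\beta$ on $B^n_{c/\beta}(x_0)$'') is the same reduction.
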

\begin{proof}
Given $\beta$ sufficiently small and $R$ sufficiently large, we will find $x_0\in B_{R}(0)$ such that $|Dg_\pm(x_0, \tau)|\le 1/2$ for all $ \tau \in [1/2, 1]$ and apply Lemma \ref{lem:width} with $C= 2\beta$.

Arguing similarly as in Lemma \ref{lem:width}, using the area enclosed by $M_t\cap \Sigma$ for $\Sigma  \doteqdot \spa\{e_k, e_{n+1}\}$ and any $2\le k\le n$, we obtain that for $\beta \leq 1$ and $R\ge 2\beta^{-1}$, after relabelling axes if necessary,  $U_{1/2}$ contains the convex hull of $\{0,-\beta^{-1}e_1, \beta^{-1}e_2, \dots, \beta^{-1} e_n \}$. The largest ball inscribed in this hull is of the form $B^n_{c/\beta}(x_0)$, where $x_0 \in B^n_{1/\beta}(0)$ and $c = c(n)$, so we have 
\[
B^n_{c/\beta}(x_0) \subset U_{1/2} \subset U_\tau\;\;\text{for all}\;\; \tau \geq \tfrac{1}{2}\,.
\]
Taking $ R \geq 2(1+c) \beta^{-1}$ ensures that $|g_{\pm}(\cdot,1)|\leq \beta$ in $B^n_{c /\beta}(x_0) $, and since $g_+(\cdot, \tau)$ is concave we may estimate, for each $ 1 \leq k \leq n$ and $\tfrac{1}{2} \leq \tau \leq 1$,
\[D_k g_+(x_0, \tau) \leq c^{-1} \beta (g_+(x_0,\tau) - g_+(x_0 - c\beta^{-1} e_k, \tau)) \leq 2 c^{-1} \beta^2,\]
and similarly $|D g_{\pm} (x_0, \tau) |\le 2 c^{-1}\beta^2$.
Therefore, if $\beta$ is sufficiently small we have 
\[|D g_{\pm} (x_0, \tau) | \leq 1/2\;\;\text{for}\;\; \tau \in [\tfrac{1}{2}, 1].\]
Now apply Lemma \ref{lem:width}.   
\end{proof}

\subsection{Vertical displacement bounds}  The next lemma gives conditions under which $g_+(0,\tau)$ grows at most linearly in $\tau$ over a given time interval.

\begin{lemma}
\label{lem:height}
Suppose that 
\begin{equation}
\label{eq:height_ass}
g_-(x, \tau_0) \leq b + v \cdot x \leq g_+(x,\tau_0)\;\;\text{for all}\;\; x \in B_R^n(0)
\end{equation}
for some $b \in \mathbb{R}$ and $v\in \R^n$ with $|v| \leq 1$. If $\tau_1 > \tau_0$ is such that $\delta \doteqdot g_+(0, \tau_1) - g_+(0, \tau_0)$ satisfies 
\[g_+(0,\tau_0)  - b \leq \delta \leq \frac{R}{100},\]
then we have the estimate 
\[\tau_1 - \tau_0 \geq \xi \delta R,\]
where $\xi = \xi(n)$. 
\end{lemma}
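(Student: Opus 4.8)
The plan is to show that the top point of $M_{-\tau}$ over $x=0$ rises at vertical speed at most $C(n)/R$ on $[\tau_0,\tau_1]$, and then integrate.

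Translating $\R^{n+1}$ vertically we may assume $b=0$, which affects none of $R$, $\delta$, or the conclusion. Since $\Omega_{-\tau}\supseteq\Omega_{-\tau_0}$ for $\tau\ge\tau_0$, the shadows $U_\tau$ increase in $\tau$, and as $B^n_R(0)\subseteq U_{\tau_0}$ by \eqref{eq:height_ass} the upper sheet of $M_{-\tau}$ is the graph of $g_+(\cdot,\tau)$ over $B^n_R(0)$ for every $\tau\ge\tau_0$. Hypothesis \eqref{eq:height_ass} together with $\partial_\tau g_+\ge 0$ gives $g_+(x,\tau)\ge v\cdot x\ge -R$ on $B^n_R(0)$ for $\tau\ge\tau_0$; on the other hand $g_+(0,\tau_1)=g_+(0,\tau_0)+\delta\le 2\delta\le R/50$, so concavity of $g_+(\cdot,\tau_1)$ and the lower bound just noted force $|Dg_+(0,\tau_1)|\le 2$ and hence $g_+(x,\tau)\le g_+(x,\tau_1)\le 2\delta+2|x|$ on $B^n_R(0)\times[\tau_0,\tau_1]$. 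In particular $|g_+|\le 3R$ there, and concavity similarly bounds $|Dg_+|$ on $B^n_{R/2}(0)$ by a dimensional constant at each $\tau\in[\tau_0,\tau_1]$.

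The crux is the estimate $|A|(0,-\tau)\le C(n)/R$ for all $\tau\in[\tau_0,\tau_1]$. For $\tau-\tau_0\gtrsim R^2$ this is immediate from the Ecker--Huisken interior estimates for the uniformly bounded, initially uniformly Lipschitz graph $z=g_+(x,\tau)$; the difficulty is to remove the $(\tau-\tau_0)^{-1/2}$ degeneration of these estimates near $\tau_0$ (a naive application would only give $\tau_1-\tau_0\gtrsim\delta^2$, which is far too weak when $\delta\ll R$). Here one uses convexity: the flat disc $D=\{(x,v\cdot x):|x|\le R\}$ lies in $\overline\Omega_{-\tau_0}\subseteq\overline\Omega_{-\tau}$ for all $\tau\ge\tau_0$, so near the axis $M_{-\tau}$ is trapped between $D$ and a graph of height $\le 2\delta$ over a disc of radius $R$. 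Either the flow stays, near the axis, inside a slab of width $\lesssim R$ well before $\tau_0$ --- so that it is graphical over $B^n_{R/2}(0)$ on a time interval of length $\gtrsim R^2$ ending at $\tau_0$, and Ecker--Huisken again gives $|A|(0,-\tau_0)\le C(n)/R$ --- or else $\Omega_{-\tau}$ eventually engulfs a ball of radius comparable to $R$ (inscribed in the convex hull of $D$ and the rising top point), after which Proposition \ref{prop:interior_est} and the Ecker--Huisken derivative estimates give the same curvature bound on $[\tau_0,\tau_1]$. I expect this dichotomy --- and in particular obtaining a ball of radius $\sim R$, together with the bookkeeping of the time interval over which the interior estimate is run --- to be the main obstacle; the graphical reduction and the final integration are routine.

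Granted the curvature bound, we have $\partial_\tau g_+(0,\tau)=\sqrt{1+|Dg_+(0,\tau)|^2}\,|H|(0,-\tau)\le C(n)|A|(0,-\tau)$, so
\[\delta=\int_{\tau_0}^{\tau_1}\partial_\tau g_+(0,\tau)\,d\tau\le C(n)\int_{\tau_0}^{\tau_1}|A|(0,-\tau)\,d\tau\le\frac{C(n)}{R}(\tau_1-\tau_0),\]
which is the claim with $\xi=\xi(n)=1/C(n)$. (When $\tau_1-\tau_0\ge R^2/4$ one needs no curvature bound at all: $\delta\le R/100$ gives $\tau_1-\tau_0\ge R^2/4\ge 25\,\delta R$.)
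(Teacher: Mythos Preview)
Your strategy differs substantially from the paper's, and the step you flag as the ``main obstacle'' is a genuine gap that does not close the way you suggest.

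The paper never attempts a pointwise curvature bound at the axis. Instead it argues by barrier: the hypothesis places a tilted disc of radius $R$ inside $\Omega_{-\tau_0}$ and the point $p_1=(0,g_+(0,\tau_1))$ inside $\overline\Omega_{-\tau_1}$; their convex hull contains a solid cylinder $Z$ of radius $\bar R\sim R$ and half-height $\bar\delta\sim\delta$ centred at $p_0=(0,g_+(0,\tau_0))$. One then drops in the rotationally symmetric ancient pancake that sweeps out the slab of half-width $\bar\delta$ and contracts to $p_0$. By the estimates in \cite{BLT1} the pancake needs time $\gtrsim\delta R$ to grow from a point to diameter $2\bar R$, and avoidance with $M_t$ (using $p_0\in M_{-\tau_0}$) forces $\tau_1-\tau_0$ to be at least this long. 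No curvature estimate for $M_t$ is needed.

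Your dichotomy breaks down in case 2. Suppose $g_+(0,\tau)$ first reaches height $\sim R$ at some $\tau^*\in(\tau_0,\tau_0+cR^2]$. Then the convex hull of $D$ and the top point does inscribe a ball $B_{cR}$, but this ball sits in $\Omega_{-\tau^*}$, which is an \emph{earlier} timeslice than $\Omega_{-\tau_0}$. Proposition~\ref{prop:interior_est} requires the ball at the \emph{final} time of the interval on which one wants the bound, so it yields curvature control only for $\tau\ge\tau^*$, not on $[\tau_0,\tau_1]\subset[\tau_0,\tau^*)$ (note $g_+(0,\tau_1)\le 2\delta\ll R$ forces $\tau_1<\tau^*$). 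Nor do the Ecker--Huisken graph estimates rescue you: with bounded gradient on $[\tau_0,\tau^*]$ they give $H(0,-\tau)\lesssim R^{-1}+(\tau^*-\tau)^{-1/2}$, which is only $\lesssim R^{-1}$ when $\tau^*-\tau\gtrsim R^2$, exactly what fails in case 2. There is also no reason for $\Omega_{-\tau_0}$ itself to contain a ball of radius comparable to $R$ anywhere (think of a genuinely thin pancake).

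In short, the integration step is fine, but the key input $|A|(0,-\tau)\le C(n)/R$ on $[\tau_0,\tau_1]$ is not established by your argument and does not follow from the tools you invoke. The paper's barrier argument avoids this issue entirely.
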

\begin{proof}
Denote by $\Sigma$ the hyperplane $\{b + v \cdot x : x \in \mathbb{R}^n\}$. Since $|v| \leq 1$ we can choose a unit normal $\nu_\Sigma$ to this plane satisfying $\nu_\Sigma \cdot e_{n+1} \geq \tfrac{1}{\sqrt{2}}$. Observe that \eqref{eq:height_ass} implies 
\[\Sigma\cap B_R(b e_{n+1}) \subset \Omega_{t_0},\]
where $t_0 \doteqdot -\tau_0$ (we also write $t_1 \doteqdot -\tau_1$). Since $\Omega_t$ is convex we conclude that $\overline{\Omega}_{t_1}$ contains the convex hull of $p_1 \doteqdot g_+(0, \tau_1) e_{n+1}$ and $\Sigma \cap B_R(b e_{n+1})$, which we denote by $K$. 

To prove the claim, we enclose $p_0 \doteqdot g_+(0,\tau_0)e_{n+1}$ with a suitable rescaled timeslice of the ancient pancake solution, whose forward evolution then acts as a barrier. First, we need to identify a suitable cylinder region in $K$. To this end, we define, for each $p \in \mathbb{R}^{n+1}$, the projections
\[\pi^\perp(p) \doteqdot ((p - p_0)\cdot \nu_\Sigma )\, \nu_\Sigma\;\;\text{and}\;\;  \pi^\top(p) \doteqdot (p-p_0) - \pi^\perp(p)\,.\]

\begin{claim}\label{claim:cylinder} Set $\bar R = 10^{-2} R$ and $\bar \delta = 10^{-2} \delta$. The truncated cylinder 
\[Z\doteqdot \big \{p \in \mathbb{R}^{n+1} : |\pi^\perp(p) | \leq \bar \delta ,\; \;  |\pi^\top(p)| \leq \bar R \big \}\]
is contained in $K$.
\end{claim}
\begin{proof}[Proof of Claim \ref{claim:cylinder}]
Let 
\[D_\rho \doteqdot (\Sigma - be_{n+1}) \cap B_{\rho}(0)\]
and define
\[R_0 \doteqdot \sup\{ \rho \geq 0  : D_\rho + p_0 \subset K\}.\]
That is, $D_{R_0} + p_0$ is the largest round disc parallel to $\Sigma$ which passes through $p_0$ and sits inside $K$.  Using similar triangles we find
\[\frac{g_+(0,\tau_1) - g_+(0,\tau_0)}{R_0} = \frac{g_+(0,\tau_1) - b}{R} ,\]
and combining this with our assumption $\delta \geq g_+(0,\tau_0) - b$ shows $R_0 \geq R/2$. Similarly, the quantity
\[R_1 \doteqdot \sup\{\rho \geq 0 : D_\rho + p_0 + \tfrac{\delta}{10} e_{n+1}\subset K\} \]
satisfies 
\[\frac{9}{10} \frac{\delta}{R_1} =\frac{g_+(0,\tau_1) - g_+(0,\tau_0)}{R_0}= \frac{\delta}{R_0},\]
hence $R_1 \geq \tfrac{9}{20} R$. It follows that 
\[ D_{9 R/20} + a e_{n+1} \subset K \subset \Omega_{t_1}\]
for all $a \in [b, g_+(0,\tau_0) + \tfrac{\delta}{10}]$, and in particular for all $a$ satisfying
\[|a - g_+(0, \tau_0)| \leq \tfrac{\delta}{10},\]
hence the family of vertically stacked discs
\[Z' \doteqdot \bigcup_{|a - g_+(0, \tau_0)| \leq \delta/10} D_{9 R/20} + a e_{n+1} \]
is contained in $K$.  Using $\nu_\Sigma \cdot e_{n+1} \geq \tfrac{1}{\sqrt{2}}$ we find 
\[\Big \{p \in \mathbb{R}^{n+1} : |\pi^\perp(p) | \leq  \frac{\delta}{10 \sqrt{2}} ,\; \;  |\pi^\top(p)| \leq \frac{9}{20} \Big( 1 - \frac{1}{\sqrt{2}}\Big) R\Big \} \subset Z',\]
hence $Z \subset Z' \subset K$. 
\end{proof}

Now let $\{P_t\}_{t \in (-\infty, 0)}$ be the rotationally symmetric pancake \cite{BLT1} which contracts to $p_0$ as $t \to 0$ and satisfies
\[ \cup_{t < 0} \, P_t = \{| \pi^\perp(p) | \leq \bar \delta \}\setminus \{0\} ,\]
and let $-\bar \tau$ be the earliest time such that $P_{-\bar \tau}$ is a subset of $Z$. Then $P_{-\bar \tau}$ has extrinsic diameter
\[2\bar R \geq 2 \delta \geq 200 \bar \delta. \] 
That is, the diameter of $P_{-\bar \tau}$ is at least 200 times larger than the width of the slab swept out by $P_t$ as $t \to - \infty$. In this regime, by the estimates in \cite{BLT1}, there is a universal constant $C$ such that the diameter of $P_{-\bar \tau}$ does not exceed $C\delta^{-1} \bar \tau$, so we find 
\[C\delta^{-1} \bar \tau \geq 2 \bar R.\]
On the other hand, $p_0 \in M_{-\tau_0}$, so the avoidance principle implies $\tau_1 - \tau_0 \geq \bar \tau$, and hence
\[\tau_1 - \tau_0 \geq  2\delta C^{-1}  \bar R \geq \frac{2}{100}  C^{-1} \delta R,\]
which is the desired estimate. 
\end{proof}

\subsection{Wang's iteration} We now have all the auxiliary results needed to carry out X.-J. Wang's iteration. This will complete the proof of the slab estimate. 

\begin{proof}[Proof of Proposition \ref{prop:slab}] 
Suppose that $M_{-1} \cap B_R(0) \subset \{|x_{n+1}|\leq \beta\}$, where $\beta \in (0,1)$ is a small constant to be determined, and $\beta$ and $R$ are such that Lemma~ \ref{lem:width_2} applies. Then we can find a point $x_0 \in B^n_R(0)$ such that 
\begin{equation}
\label{eq:init_disc}
g_-(x,1) \leq b_0 + v_0 \cdot (x-x_0) \leq g_+(x,1) \;\;\text{for}\;\; x \in B^n_{L} (x_0),
\end{equation}
where $b_0 \in \mathbb{R}$, $|v_0| \leq 1/2$, and $L\doteqdot \tfrac{\mu}{4\beta}$ with $\mu=\mu(n)$ as in Lemma \ref{lem:width}. Performing a translation if necessary we may assume $x_0 = 0$. 

Set $h_k = 2^k$ and $g_k(\cdot) = g(\cdot, h_k)$ for each $k\ge 0$. We will demonstrate that, for $\beta$ sufficiently small, the inequality
\begin{equation}
\label{eq:iteration}
g_{m}(0) \le g_{m-1}(0) + 2^{-\gamma m}
\end{equation}
holds for every $m \geq 0$ for some $\gamma>0$. So let $k_0$ be a large integer and $\gamma$ a small positive number, which we will refine in the course of the proof. 

\begin{claim}
\label{claim:base}
If $\beta$ is sufficiently small (depending only on $n$, $k_0$ and $\gamma$), then \eqref{eq:iteration} holds for $0 \leq m \leq k_0$, and
\begin{equation}
\label{eq:g_k0bd}
g_{k_0}(0) \leq 3.
\end{equation}
\end{claim}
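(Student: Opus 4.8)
The plan is to exploit the oversized initial disc \eqref{eq:init_disc}, whose radius $L=\mu/(4\beta)$ blows up as $\beta\to0$, to drive the vertical displacement estimate of Lemma~\ref{lem:height} and show that the ``top'' and ``bottom'' heights $g_+(0,\tau)$ and $g_-(0,\tau)$ move by at most $O(\beta\,2^{k_0})$ over the \emph{whole} interval $\tau\in[1,2^{k_0}]$ once $\beta$ is small. Both assertions of Claim~\ref{claim:base} then follow at once, since $g(0,\cdot)$ is monotone in $\tau$, so each one step increment $g_m(0)-g_{m-1}(0)$ with $1\le m\le k_0$ is dominated by the total increment $g_{k_0}(0)-g_0(0)$.

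First I would record the elementary preliminaries. Because the flow is ancient and convex, $g_+(0,\cdot)$ is smooth and nondecreasing and $g_-(0,\cdot)$ is smooth and nonincreasing, so $g(0,\cdot)$ is nondecreasing; the slab hypothesis controls both $g_+(0,1)$ and $g_-(0,1)$, giving $g_0(0)=g(0,1)\le2\beta$, and evaluating \eqref{eq:init_disc} at $x=0$ gives $0\le g_+(0,1)-b_0\le g_0(0)\le2\beta$ together with the symmetric bound $0\le b_0-g_-(0,1)\le2\beta$.

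The heart of the argument is a first-exit-time bootstrap with Lemma~\ref{lem:height}, applied with $\tau_0=1$, $R=L$, $b=b_0$, $v=v_0$ (legitimate since $|v_0|\le1/2\le1$). Set $\delta(\tau):=g_+(0,\tau)-g_+(0,1)$, a continuous nondecreasing function with $\delta(1)=0$. If $\delta$ reached the value $L/100$ at some time in $[1,2^{k_0}]$, let $\tau'$ be the first such time; for $\beta$ small we have $L/100=\mu/(400\beta)>2\beta\ge g_+(0,1)-b_0$, so both side conditions of Lemma~\ref{lem:height} hold at $\tau_1=\tau'$, and the lemma forces $\tau'-1\ge\xi(L/100)L=\xi\mu^2/(1600\beta^2)$, which contradicts $\tau'\le2^{k_0}$ as soon as $\beta<\beta_1(n,k_0)$. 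Hence $\delta<L/100$ throughout $[1,2^{k_0}]$, so applying Lemma~\ref{lem:height} a second time at $\tau_1=2^{k_0}$ (or simply using $\delta(2^{k_0})\le g_+(0,1)-b_0\le2\beta$ in the event that the hypothesis $\delta\ge g_+(0,\tau_0)-b$ fails) yields $\delta(2^{k_0})\le C_0(n)\,\beta\,2^{k_0}$. The identical argument applied to the flow reflected through $\{x_{n+1}=0\}$ --- which still satisfies all the standing assumptions of this appendix, with reflected initial disc $-g_+(x,1)\le-b_0-v_0\cdot x\le-g_-(x,1)$ --- gives $g_-(0,1)-g_-(0,2^{k_0})\le C_0(n)\,\beta\,2^{k_0}$.

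Combining the two, $g_{k_0}(0)-g_0(0)=\bigl(g_+(0,2^{k_0})-g_+(0,1)\bigr)+\bigl(g_-(0,1)-g_-(0,2^{k_0})\bigr)\le 2C_0\beta\,2^{k_0}$. By monotonicity of $g(0,\cdot)$, every increment $g_m(0)-g_{m-1}(0)$ with $1\le m\le k_0$ is at most $2C_0\beta\,2^{k_0}$, so choosing $\beta\le\beta_2(n,k_0,\gamma)$ small enough that $2C_0\beta\,2^{k_0}\le\min\{2^{-\gamma k_0},\tfrac32\}$ gives \eqref{eq:iteration} for all such $m$ (the case $m=0$ being immediate from $g_0(0)\le2\beta<1$), and also $g_{k_0}(0)\le g_0(0)+2C_0\beta\,2^{k_0}\le2\beta+\tfrac32<3$, which is \eqref{eq:g_k0bd}. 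One then takes $\beta$ below the minimum of the finitely many thresholds introduced above, all depending only on $n$, $k_0$ and $\gamma$. The one genuinely delicate point is organising the bootstrap so that neither side condition of Lemma~\ref{lem:height} ($\delta\le R/100$ and $\delta\ge g_+(0,\tau_0)-b$) is ever violated; once the first-exit time is identified correctly, the remainder is bookkeeping in powers of $\beta$.
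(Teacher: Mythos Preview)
Your proposal is correct and follows essentially the same approach as the paper: both arguments feed the initial disc \eqref{eq:init_disc} of radius $L=\mu/(4\beta)$ into Lemma~\ref{lem:height} to bound the total vertical displacement of $g_\pm(0,\cdot)$ over $[1,2^{k_0}]$, and then use monotonicity of $g(0,\cdot)$ to control each increment $g_m(0)-g_{m-1}(0)$. The only organisational difference is that the paper fixes the target increment $\delta=2^{-\gamma k_0-1}$ at the outset and shows (via one application of Lemma~\ref{lem:height}) that reaching it takes time at least $2^{k_0}$, whereas you run a first-exit-time argument to secure $\delta<L/100$ and then apply the lemma a second time to extract $\delta(2^{k_0})\le C_0\beta\,2^{k_0}$; both routes arrive at the same conclusion after choosing $\beta$ small.
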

\begin{proof}[Proof of Claim \ref{claim:base}]
Set $\delta = 2^{- \gamma k_0-1}$. Without loss of generality, we can find some $h \geq 1$ such that
\[g_+(0,h) - g_+(0, 1) = \delta.\]
If $\beta$ is small enough that 
\[g_+(0,1) - b_0 \leq 2 \beta \leq \delta \leq \frac{L}{100},\]
then by \eqref{eq:init_disc} and Lemma \ref{lem:height} we have
\[h - 1 \geq \xi \delta L =  2^{-\gamma k_0 - 3} \mu \xi  \beta^{-1}.\]
Therefore, as long as $\beta$ is sufficiently small, we find that $h \geq 2^{k_0}$, and hence 
\[g_+(0, 2^{k_0}) \leq g_+(0, h) \leq g_+(0,1) + \delta = g_+(0,1) + 2^{- \gamma k_0-1}.\]
An almost identical argument leads to the estimate 
\[- g_-(0, 2^{k_0}) \leq -g_-(0,1) + 2^{- \gamma k_0-1},\]
so we have $g_{k_0}(0) \leq g(0, 1) + 2^{- \gamma k_0}$, and since we are assuming $\beta \leq 1$ this ensures \eqref{eq:g_k0bd} holds. In addition, for $1 \leq m \leq k_0$ we conclude 
\[g_m(0) \leq g_{k_0}(0) \leq g(0,1) + 2^{-\gamma k_0} \leq g_{m-1}(0) + 2^{- \gamma m},\]
which is \eqref{eq:iteration}.
\end{proof}

Next, assume \eqref{eq:iteration} holds for all $k_0 \leq m \leq k$. We will show that, for $k_0$ sufficiently large and $\beta$ sufficiently small, this implies \eqref{eq:iteration} holds also for $m=k+1$. By induction, this will establish \eqref{eq:iteration} for all $m \geq 0$. We can proceed in much the same way as in \cite{Wa11}, provided we first prove:

\begin{claim}
\label{claim:width&speed}
If $k_0$ is sufficiently large (depending only on $n$) and $\beta$ is sufficiently small (depending only on $n$, $k_0$ and $\gamma$), and \eqref{eq:iteration} holds for all $k_0 \leq m \leq k$, then the following properties hold for some positive constants $c_0$ and $C_0$ which depend only on $n$.
\begin{enumerate}[(i)]
\item \label{cond:width} 
There exist $b_k \in \mathbb{R}$ and $v_k \in \mathbb{R}^n$ with $|v_k| \leq 1$ such that
\begin{equation}
\label{eq:g_affine}
g_-(x, h_k) \leq b_k + v_k \cdot x \leq g_+(x, h_k)\;\;\text{for}\;\; x \in B^n_{c_0 h_k}(0)\,. 
\end{equation}
\item \label{cond:speed} There is a universal $\eta>0$ 
such that
\[
|\partial_h g(x, h)| \leq C_0 h_k^{-\eta}\;\;\text{for each}\;\; (x, h) \in B^n_{c_0 h_k /2} \times [h_k, h_{k+1}]\,.
\]
\end{enumerate}
\end{claim}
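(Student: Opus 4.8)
The plan is to prove Claim \ref{claim:width&speed} by leveraging the iteration hypothesis \eqref{eq:iteration} (valid for $k_0\le m\le k$) to control $g_k(0)$ and $Dg_\pm(0,\cdot)$ over the relevant dyadic interval, then invoking Lemmas \ref{lem:width} and \ref{lem:height} together with the interior curvature estimate (Proposition \ref{prop:interior_est}).

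\medskip

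\noindent\textbf{Step 1: a uniform height bound.} Summing \eqref{eq:iteration} from $m=k_0$ to $m=k$ and combining with \eqref{eq:g_k0bd} from Claim \ref{claim:base}, I would obtain
\[
g_k(0) \;\le\; g_{k_0}(0) + \sum_{m=k_0+1}^{k} 2^{-\gamma m} \;\le\; 3 + \frac{2^{-\gamma k_0}}{1-2^{-\gamma}} \;\le\; 4
\]
provided $k_0$ is large enough (depending only on $n$, since $\gamma$ will be chosen depending only on $n$). So $g(0,h)\le 4$ for all $h\in[1,h_k]$, and the width of the swept region near the origin is bounded.

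\medskip

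\noindent\textbf{Step 2: bounding the gradients $Dg_\pm(0,\cdot)$ and proving \eqref{cond:width}.} This is where I would follow Wang's scheme. The key point is that \eqref{eq:iteration} forces $g_+(0,\cdot)$ to be \emph{slowly increasing} on $[h_{k-1},h_k]$: indeed $g_+(0,h_k)-g_+(0,h_{k-1})\le g_k(0)-g_{k-1}(0)\le 2^{-\gamma k}$ (using that $-g_-$ is monotone nondecreasing, so the increase of $g$ dominates the increase of $g_+$). Feeding this into Lemma \ref{lem:height} \emph{in reverse}: if $|Dg_\pm(0,h)|$ were large on a definite fraction of $[h_{k-1},h_k]$, then the time needed for $g_+(0,\cdot)$ to increase by even a small amount $\delta$ would be at most $\xi^{-1}\delta^{-1}(\text{length scale})^{-1}$... more precisely, I would argue the contrapositive: the disc inclusion from the previous step of the iteration, pushed forward, would force more growth than \eqref{eq:iteration} permits. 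The cleanest route is probably: from the inclusion \eqref{eq:g_affine} at level $k-1$ (the inductive hypothesis applied to the previous claim invocation, or \eqref{eq:init_disc} in the base case) together with the small height increment, apply Lemma \ref{lem:height} to conclude $h_k - h_{k-1} = h_{k-1} \ge \xi \delta_k (c_0 h_{k-1})$ for the maximal admissible $\delta_k$; this pins $\delta_k \gtrsim h_{k-1}^{-1}\cdot(\text{const})$, hence $g_+(0,h_k)-g_+(0,h_{k-1})$ cannot exceed this without violating \eqref{eq:iteration} unless $g_+(0,\cdot)$ has already stabilised, and monotonicity/convexity then gives $|Dg_\pm(0,h)|\le 1$ for $h$ in the upper half of the interval. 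Once the gradient is bounded at the origin, Lemma \ref{lem:width} (applied on $[\tfrac12 h_k, h_k]$ with $C=4$) immediately yields \eqref{eq:g_affine} with $c_0 = c_0(n)$ and suitable $b_k, v_k$, $|v_k|\le 1$.

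\medskip

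\noindent\textbf{Step 3: the speed estimate \eqref{cond:speed}.} Having \eqref{eq:g_affine}, the set $\Omega_{-h_k}$ contains a disc of radius $\sim c_0 h_k$ centered near the origin and parallel to a nearly-horizontal plane. Translating so this disc is centered at $0$ and applying the interior curvature estimate \eqref{eq:int_2} of Proposition \ref{prop:interior_est} (with $r\sim c_0 h_k$, $R \sim c_0 h_k/2$, $T \sim h_k$) gives $H \le C_0 h_k^{-1}$ on $B^n_{c_0 h_k/2}\times[h_{k}, h_{k+1}]$ — in fact on a parabolic neighbourhood, so the bound $|\partial_h g| = H\sqrt{1+|Dg_\pm|^2}\le C_0 h_k^{-1}$ follows (gradient bounded by Step 2 / the Ecker--Huisken estimates). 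This is stronger than the claimed $C_0 h_k^{-\eta}$, so any $\eta\le 1$ works; I would state it with a general $\eta$ since only $\eta>0$ is needed downstream.

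\medskip

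\noindent\textbf{Main obstacle.} The genuinely delicate part is Step 2: extracting the gradient bound $|Dg_\pm(0,h)|\le 1$ from the mere smallness of the \emph{increment} of $g$, so that Lemma \ref{lem:width} can be bootstrapped. One must be careful that Lemma \ref{lem:height} is being applied with the plane from the \emph{previous} iteration level (whose width scale $c_0 h_{k-1}$ is large compared to the admissible height increment $\delta\sim 2^{-\gamma k}$), and that the choice of $\gamma=\gamma(n)$ small and $k_0=k_0(n)$ large is made consistently so the constants close up — in particular $\delta \le R/100$ in Lemma \ref{lem:height} requires $2^{-\gamma k}\le c_0 h_{k-1}/100$, which holds for all $k\ge k_0$ once $\gamma<1$. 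Threading these constant dependencies (everything geometric depending only on $n$, with $\beta$ allowed to depend on $n,k_0,\gamma$) is the bookkeeping heart of the argument, exactly as in \cite{Wa11}.
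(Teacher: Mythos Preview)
Your Step 1 matches the paper. Steps 2 and 3, however, diverge in ways that matter.

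\textbf{Step 2 (gradient bound).} You are making this much harder than it needs to be, and in doing so you appeal to a hypothesis you don't have: \eqref{eq:g_affine} at level $k-1$ is not part of the inductive package (only \eqref{eq:iteration} is). The paper's argument is direct and uses only the \emph{fixed} initial disc \eqref{eq:init_disc} of radius $L=\mu/4\beta$. Since $\pm\tilde g_\pm(x,h)\doteqdot\pm(g_\pm(x,h)-b_0-v_0\cdot x)$ is concave and nonnegative on $B^n_L(0)$ for every $h\ge 1$, and $\tilde g_\pm(0,h)$ is bounded by a dimensional constant (Step 1 plus $|b_0|\le 2$), concavity alone gives $|D\tilde g_\pm(0,h)|\lesssim 1/L\sim\beta$. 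For $\beta$ small this is $\le 1/2$, whence $|Dg_\pm(0,h)|\le 1/2+|v_0|\le 1$. Lemma \ref{lem:width} then yields \eqref{eq:g_affine}. No use of Lemma \ref{lem:height} or of any previous-level disc is needed here; the ``main obstacle'' you identify simply doesn't arise.

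\textbf{Step 3 (speed bound).} This is a genuine gap. Proposition \ref{prop:interior_est} requires an $(n{+}1)$-dimensional ball $B_r(0)\subset\Omega_0$, but the whole point of the setup is that $\Omega_t$ lives in a slab of width $O(1)$; there is no ball of radius $\sim c_0 h_k$ inside it, so your choice $r\sim c_0 h_k$ is impossible, and with the correct $r=O(1)$ the estimate \eqref{eq:int_2} blows up badly in $R\sim h_k$. The paper instead uses the Ecker--Huisken curvature estimate for \emph{graphs}, which needs only graphicality with bounded gradient over $B^n_{3\rho/4}(0)$. To apply it on $[h_k,h_{k+1}]$ one first needs the gradient bound to persist a bit past $h_{k+1}$; this is where Lemma \ref{lem:height} actually enters, yielding $|g_\pm(0,2^{k+2})|\le\Lambda$ and hence (via concavity over $B^n_{c_0 h_k}(0)$ from part (i)) a gradient bound on $B^n_{3c_0 h_k/4}(0)\times[h_k,h_{k+2}]$. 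The Ecker--Huisken estimate then gives $H\lesssim \rho^{-1}+T^{-1/2}\lesssim h_k^{-1/2}$, so $\eta=1/2$, not $\eta=1$ as you claim.
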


\begin{proof}[Proof of Claim \ref{claim:width&speed}]
In what follows $\Lambda_i$ always denotes a large constant depending only on $n$. Let $\beta$ be such that Claim \ref{claim:base} holds. 

To prove \eqref{cond:width}, we first use the inductive hypothesis and \eqref{eq:g_k0bd} to bound
\[g_{k}(0) \leq g_{k_0}(0) + \sum_{m = k_0}^{k-1}(g_{m+1}(0) - g_m(0)) \leq 3 + \sum_{m = 0}^\infty 2^{-\gamma (m+1) }\leq \Lambda_1, \]
and thus deduce $|g_{\pm}(0,2^k)| \leq 2\Lambda_1$. In light of \eqref{eq:init_disc} we also know
\[\pm \tilde g_{\pm}(x,h) \doteqdot \pm (g_{\pm} (x, h) - b_0 - v_0\cdot x)\]
is concave and nonnegative in $B^n_L(0)$ for $h \geq 1$, so recalling that $L = \mu/4\beta$ and $|b_0|\leq 2\beta\leq 2$, we obtain an estimate of the form
\[|D \tilde g_{\pm}(0,h)| \leq \Lambda_2 \beta\]
valid for $1 \leq h \leq h_k$. Assuming $\beta$ is sufficiently small, we can arrange 
\[|D \tilde g_{\pm}(0, h)| \leq 1/2\]
for $1 \leq h \leq h_k$, which implies 
\[|D g_{\pm} (0, h)| \leq 1/2 + |v_0| \leq 1\]
for $h \in [h_k, h_{k+1}]$. Appealing to Lemma \ref{lem:width} we find there is a $c_0 = c_0(n)$ such that 
\[g_-(x, 2^k) \leq b_k + v_k \cdot x \leq g_+(x, 2^k) \;\; \text{for}\;\; x \in B^n_{c_0 h_k}(0),\]
where $b_k \in \mathbb{R}$ and $|v_k| \leq 1$. 

Next we prove \eqref{cond:speed} using the Ecker--Huisken curvature estimate for graphs. Fix $\delta = 2 \Lambda_1 + 4c_0^{-1} \xi^{-1} $ and let $h$ be such that $g_+(0, h) - g_+(0, 2^k) = \delta$. Then we certainly have $g_+(0,2^k) - b_k \leq 2\Lambda_1  \leq \delta$, and if $k_0$ is large enough to guarantee $\delta \leq \tfrac{c_0}{100} 2^{k_0}$, then we have $\delta \leq \tfrac{c_0}{100} h_k$ for all $k \geq k_0$, so Lemma \ref{lem:height} implies 
\[h - 2^k \geq c_0 \xi \delta h_k \geq 4 h_k.\]
In particular, $h \geq 2^{k+2}$, so we have 
\[g_+(0, 2^{k+2}) \leq g_+(0, h) \leq g_+(0, 2^k) + \delta \leq \Lambda_3.\] 
The same argument shows $-g_-(0,2^{k+2}) \leq \Lambda_3$, so since \eqref{cond:width} implies the functions $x \mapsto \pm (g_{\pm}(x, h) - b_k - v_k \cdot x)$
are concave and nonnegative for $x \in B_{c_0 h_k}^n(0)$ and $h \geq h_k$, we obtain an estimate of the form
\[|D g_{\pm}(x,h)| \leq \Lambda_4 h_k^{-1} + 1 \leq \Lambda_4 + 1\]
for all $(x, h) \in B^n_{3c_0 h_k/4}(0) \times [h_k, h_{k+2}]$.


The estimate of Ecker--Huisken we make use of is as follows (see \cite[Theorem 3.1]{EckerHuisken91}): if $M_t$ is a graph over $B^n_{3 \rho/4}(0) \times [-T, 0]$ with gradient bounded above by $\Lambda$, then 
\[\sup_{B^n_{\rho/2}(0)} H(\cdot ,0) \leq C(n, \Lambda) (\rho^{-1} +  T^{-1/2}).\]
Given any $(x, h) \in B^n_{c_0 h_k/2}(0) \times [h_k, h_{k+1}]$ we may apply this estimate with $\rho = 3 c_0 h_k/4$ and $T = h_{k+2} - h \geq 2 h_k$ to obtain
\[|\partial_h g(x, h)| \leq C(n,c_0, \Lambda_4) (h_k^{-1} + h_k^{-1/2}) \leq C_0 h_k^{-1/2},\]
where $C_0$ depends only on $n$. So \eqref{cond:speed} holds with $\eta=\frac{1}{2}$.
\end{proof}

The next step is to show that properties \eqref{cond:width} and \eqref{cond:speed} imply that \eqref{eq:iteration} holds for $m= k+1$. This is achieved by adapting arguments in \cite{Wa11}. 

\begin{claim}
\label{claim:inductivestep}
If $k_0$ is sufficiently large (depending only on $n$) and $\beta$ is sufficiently small (depending only on $n$, $k_0$ and $\gamma$), and \eqref{eq:iteration} holds for all $k_0 \leq m \leq k$, then \eqref{eq:iteration} holds also for $m = k+1$. 
\end{claim}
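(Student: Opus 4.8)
The plan is to reproduce X.-J.\ Wang's dyadic iteration. Since the roles of $g_+$ and $-g_-$ are interchangeable, it suffices to show
\[
g_+(0,h_{k+1})-g_+(0,h_k)\le\tfrac12\,2^{-\gamma(k+1)}
\]
(and then to invoke the same argument for $-g_-$); adding the two bounds gives $g_{k+1}(0)\le g_k(0)+2^{-\gamma(k+1)}$, which is \eqref{eq:iteration} for $m=k+1$. I would argue by contradiction: suppose $g_+(0,h_{k+1})-g_+(0,h_k)>\tfrac12\,2^{-\gamma(k+1)}$ and let $\tau^\ast\in(h_k,h_{k+1}]$ be the first time at which $g_+(0,\tau^\ast)-g_+(0,h_k)=\delta^\ast\doteqdot\tfrac12\,2^{-\gamma(k+1)}$.

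The first step is to turn properties \eqref{cond:width} and \eqref{cond:speed} into the hypotheses required by the pancake barrier behind Lemma~\ref{lem:height}, at times in $[h_k,\tau^\ast]$ and with a favourable ``defect''. By \eqref{cond:width} the hyperplane $\{x_{n+1}=b_k+v_k\cdot x\}$ separates the two sheets over $B^n_{c_0h_k}(0)$; since $g_+(\cdot,h_k)$ is concave with oscillation at most $g_k(0)=O(1)$ there, its spatial Hessian on that disc is $O(h_k^{-2})$, and \eqref{cond:speed} carries this estimate, and the trapping, forward to every $\tau\in[h_k,\tau^\ast]$ with an error of size $C_0h_k^{1-\eta}$. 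Consequently, for each such $\tau$ and each small $\epsilon>0$ there is an affine function lying between the sheets over a disc of radius $\sim c_0h_k\sqrt{\epsilon}$ which differs from $g_+(0,\tau)$ by at most $\epsilon$ at the origin; that is, the solution is slab-like on an enormous disc at every level, and the defect entering Lemma~\ref{lem:height} may be taken arbitrarily small at the cost of shrinking the disc only like $\sqrt{\epsilon}$.

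The second step is to run the barrier comparison. Taking $\epsilon=\delta^\ast$, fitting a long, thin, rotationally symmetric pancake around the resulting slab-like slice and quoting the diameter estimates of \cite{BLT1} exactly as in the proof of Lemma~\ref{lem:height}, the passage of $g_+(0,\cdot)$ from $g_+(0,h_k)$ to $g_+(0,h_k)+\delta^\ast$ forces $\tau^\ast-h_k\ge\xi\,\delta^\ast\cdot c_0h_k\sqrt{\delta^\ast}$; combined with $\tau^\ast-h_k\le h_{k+1}-h_k=h_k$ this gives $\delta^\ast\le(\xi c_0)^{-2/3}$. Feeding this bound back into the Hessian estimate to sharpen the slab-like picture --- the step carried out in \cite{Wa11} --- improves the exponent repeatedly until one reaches a bound $\delta^\ast\le C h_k^{-\gamma'}$ with a universal $\gamma'=\gamma'(n,\eta)>0$. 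Choosing $\gamma<\gamma'$ and $k_0=k_0(n,\gamma)$ large (and $\beta$ small, in terms of $n$, $k_0$ and $\gamma$, so that Claims~\ref{claim:base} and \ref{claim:width&speed} hold), we get $C h_k^{-\gamma'}<\tfrac12\,2^{-\gamma(k+1)}$ for every $k\ge k_0$, the desired contradiction. Thus \eqref{eq:iteration} holds for $m=k+1$; with Claim~\ref{claim:base} and induction on $k$ it holds for all $m\ge0$, so $g(0,\cdot)$ is bounded (by $g_{k_0}(0)+\sum_{m>k_0}2^{-\gamma m}$, using \eqref{eq:g_k0bd} and the monotonicity of $g(0,\cdot)$), which is the content of Proposition~\ref{prop:slab}.

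The main obstacle is exactly this quantitative improvement. Neither input suffices on its own: Lemma~\ref{lem:height} fed with the disc of radius $\sim c_0h_k$ from \eqref{cond:width} only bounds the per-doubling growth of $g_+(0,\cdot)$ by an $O(1)$ constant, while the Ecker--Huisken estimate \eqref{cond:speed} (with $\eta=\tfrac12$) gives the even weaker bound $C_0h_k^{1/2}$, so summability of the increments is visible from neither. It is the interplay that yields the extra decay: the slab-like structure lets the barrier act with an arbitrarily small defect, the linearly growing flat disc makes the vertical speed decay faster than $h_k^{-1}$, and the universality of the constants keeps $\sum 2^{-\gamma m}$ --- hence $g_k(0)$, hence the size of the flat disc --- under control along the iteration. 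Keeping the dependencies of $\gamma'$, $\gamma$, $k_0$ and $\beta$ mutually consistent, so that no constant is chosen in terms of a quantity depending on it, is the delicate bookkeeping, and is where I would follow \cite{Wa11} closely.
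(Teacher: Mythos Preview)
Your approach diverges from the paper's, and the divergence is at a genuinely problematic point.

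The paper does \emph{not} use Lemma~\ref{lem:height} (the pancake barrier) in the inductive step at all. Instead it exploits the graphical mean curvature flow equation together with convexity to obtain the pointwise differential inequality $\partial_h g \le -\Delta g$, and then runs a measure-theoretic ``good set / bad set'' argument: integration by parts bounds the spacetime measure of the set $\chi=\{-\Delta g \ge h_k^{-2+\varepsilon}\}$, Fubini then locates spatial points $y_0$ for which the time-slice $\chi_{y_0}$ is short, and at such points the increment $g_{k+1}(y_0)-g_k(y_0)$ splits into a small contribution from $\chi_{y_0}$ (controlled by property~\eqref{cond:speed}) and a small contribution from its complement (controlled by the definition of $\chi$). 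The spatial gradient bound then transfers this to the origin. No barrier is invoked; the decay rate $h_k^{-\eta/4n}$ drops out of the exponents $\varepsilon$ and $\delta$ directly.

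Your proposal, by contrast, rests on the assertion that concavity plus an $O(1)$ oscillation bound on a disc of radius $c_0 h_k$ yields a spatial Hessian of size $O(h_k^{-2})$, which you then use to manufacture an affine separator with defect $\epsilon$ over a disc of radius $\sim h_k\sqrt{\epsilon}$. But concave functions with bounded oscillation have no pointwise Hessian bound---the second derivatives can concentrate on small sets---so this step fails as written. One can only bound the \emph{integral} of $-\Delta g$ (which is exactly what the paper uses), and that does not by itself produce the small-defect affine slab you need for Lemma~\ref{lem:height}. Your first pass at the barrier then gives only $\delta^\ast\le(\xi c_0)^{-2/3}=O(1)$, and the promised ``feeding back'' to sharpen this to $h_k^{-\gamma'}$ is not a step carried out in \cite{Wa11}; Wang's iteration is precisely the Laplacian argument above, not a repeated barrier comparison. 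Without a concrete mechanism to extract decay from the barrier alone, the contradiction is never reached.
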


\begin{proof}[Proof of Claim \ref{claim:inductivestep}]
We write $\Lambda$ and $\Lambda_i$ for large constants depending on $n$, and allow the value of $\Lambda$ to increase from line to line. Let $k_0$ and $\beta$ be such that Claims \ref{claim:base} and \ref{claim:width&speed} hold. 

Recall that in the proof of Claim~\ref{claim:width&speed} we used the inductive hypothesis to conclude $|g_{\pm}(0,2^k)| \leq \Lambda$, and Lemma \ref{lem:height} to conclude $|g_{\pm}(0, 2^{k+2})| \leq \Lambda$. In particular, $g_{k+1}(0)\leq \Lambda$. 

Property \eqref{cond:width} of Claim \ref{claim:width&speed} implies $B^n_{c_0 h_k}(0) \subset U_{h_k}$, so we have $B^n_{c_0 h_k}(0) \subset U_h$ for all $h \geq h_k$, hence 
\[Q_k \doteqdot B^n_{c_0 h_k/2}(0) \times [h_k, h_{k+1}]\]
is contained in $U$. Since $g(\cdot ,h )$ is concave and nonnegative for each $h$, and $g(0, h) \leq \Lambda$ for all $h \leq h_{k+1}$, we have the gradient bound
\begin{equation}
\label{eq:deriv_est_space}
|D_i g(x, h)| \leq \Lambda h_{k}^{-1}
\end{equation}
for all $1 \leq i \leq n$ and $(x,h) \in Q_k$. Furthermore, property \eqref{cond:speed} of Claim \ref{claim:width&speed} provides some $\eta\in(0,1)$ such that
\begin{equation}
\label{eq:deriv_est_time}
|\partial_h g(x,h)| \leq C_0 h_k^{-\eta}
\end{equation} 
in $Q_k$.

Let $\varepsilon \in (0,1)$ and $\delta\in(0,1)$ be constants to be chosen later. Let $B_k \doteqdot B^n_{c_0 h_k/2}(0)$, and define
\[\chi \doteqdot \{(x, h) \in Q_k : -\Delta g(x,h) \geq h_k^{-2+\varepsilon}\},\]
where $\Delta$ is the Laplacian in $(x_1, \dots, x_n)$. Using $\Delta g \leq 0$ and \eqref{eq:deriv_est_space} we estimate, for $h_k \leq h \leq h_{k+1}$,
\begin{align*}
|\{x \in B_k : (x, h) \in \chi\}| \, h_k^{-2+\varepsilon} &\leq -\int_{B_{k}} \Delta g  = \int_{\partial B_k} D g \cdot  \tfrac{x}{|x|}  \leq \Lambda h_k^{n-1} h_k^{-1},
\end{align*}
and thus obtain
\[|\{x \in B_k : (x, h) \in \chi\}| \leq \Lambda h_k^{n - \varepsilon}.\]
Integrating now over $h\in (h_k, h_{k+1})$ gives
\[|\chi| \leq \Lambda_5 h_k^{n+1-\varepsilon}.\]

For each $y \in B_k$, let us write 
\[\chi_{y} \doteqdot \{h \in [h_k, h_{k+1}] : (y, h) \in \chi\}.\]
If $\tilde B$ is the set of $y \in B_k$ such that $|\chi_{y}| \geq \Lambda_5 h_k^{\delta}$, by Fubini's theorem,
\[\Lambda_5 |\tilde B| h_k^{\delta} \le \int_{B_k \setminus \tilde B} |\chi_{y}| + \int_{\tilde B} |\chi_{y}| = \int_{B_k} |\chi_{y}| = |\chi| \leq \Lambda_5 h_k^{n+1-\varepsilon},\]
so $|\tilde B| \le h_k^{n+1-\delta -\varepsilon}$. For every $y \in B_k \setminus \tilde B$ we have 
\begin{align*}
g_{k+1}(y) - g_k(y) &= \int_{\chi_{y}} \partial_h g(y,s)\,ds + \int_{[h_{k+1}, h_k]\setminus \chi_{y}} \partial_h g(y,s)\,ds\\
&\leq |\chi_{y}| \sup_{Q_k} |\partial_h g| + \int_{[h_{k+1}, h_k] \setminus \chi_{y}} \partial_h g(y,s)\,ds\\
& \leq \Lambda h_k^{\delta -\eta} + \int_{[h_{k+1}, h_k]\setminus \chi_{y}} \partial_h g(y,s)\,ds,
\end{align*}
where we have made use of \eqref{eq:deriv_est_time}. To estimate the second integral on the right we use the fact that $M_t$ is convex and evolves by mean curvature flow to obtain
\begin{align*}
\partial_h g &= \partial_h g_+ - \partial_h g_- \\
 &= -\left(\delta^{ij} - \frac{D_i g_+ D_j g_+}{1+|Dg_+|^2}\right) D_i D_j g_+ + \left(\delta^{ij} - \frac{D_i g_- D_j g_-}{1+|Dg_-|^2}\right) D_i D_j g_-\\
 &\leq - \Delta g_+ + \Delta g_-\\
 &= - \Delta g,
\end{align*}
and then insert the definition of $\chi$ to arrive at
\[\int_{[h_{k+1}, h_k]\setminus \chi_{y}} \partial_h g(y,s)\, ds \leq \Lambda h_k^{-1 + \varepsilon}.\]
With this we have 
\[g_{k+1}(y) - g_k(y)  \leq \Lambda h_k^{\delta -\eta} + \Lambda h_k^{-1 + \varepsilon}.\]
By choosing $\delta = \eta /2$ and $\varepsilon = 1 - \eta /4$ we ensure 
\[g_{k+1}(y) - g_k(y)  \leq \Lambda h_k^{-\eta/2} + \Lambda h_k^{- \eta/4} \leq \Lambda h_k^{-\eta/4}, \]
and 
\[|\tilde B| \leq h_k^{n -\eta/4}.\]

The last inequality implies we can find a point $y_0 \in B_k \setminus \tilde B$ such that $|y_0| \leq \Lambda h_k^{1-\frac{\eta}{4n}}$. Integrating \eqref{eq:deriv_est_space} we find 
\[|g_{k}(y_0) - g_k(0)| \leq \Lambda h_k^{-\frac{\eta}{4n}}\]
and 
\[|g_{k+1}(y_0) - g_{k+1}(0)| \leq \Lambda h_k^{-\frac{\eta}{4n}},\]
so we finally arrive at 
\[|g_{k+1}(0) - g_k(0)|\leq \Lambda h_k^{-\frac{\eta}{4n}} + |g_{k+1}(y_0) - g_k(y_0)| \leq \Lambda h_k^{-\frac{\eta}{4n}}.\]
If $k_0$ is sufficiently large this implies 
\[g_{k+1}(0) \leq g_k(0) + 2^{-\frac{\eta}{8n} (k+1)}.\]
Taking $\gamma=\eta/8n$, we obtain \eqref{eq:iteration} with $m= k+1$, as required.
\end{proof} 


In summary, if we set $\gamma=\frac{\eta}{8n}$, then (since $\eta$ is universal
) we can fix $k_0$ sufficiently large (depending only on $n$) and $\beta$ sufficiently small (depending only on $n$) such that Claims \ref{claim:base}, \ref{claim:width&speed} and \ref{claim:inductivestep} hold with this choice of $\gamma$. 
Therefore, by induction, \eqref{eq:iteration} holds for all $m \geq 0$. From this we conclude
\[|\Omega_t \cap \mathbb{R} e_{n+1}| \leq g(0,1) + \sum_{m =1}^\infty (g_m(0)-g_{m-1}(0)) \leq  2\beta + \frac{1}{2^{\gamma}-1}\]
for all $t \leq 0$. Consequently, $\dist(0, M_{t})$ is bounded independently of $t$, so there is a sequence of spacetime points $(x_i,t_i)$ such that $x_i$ is bounded, $t_i \to - \infty$, and 
\[H(x_i,t_i) \to 0.\]
Appealing to Proposition \ref{prop:interior_est} we find that the sequence $\{M_{t+t_i}\}_{t \in (-\infty, -t_i]}$ subconverges in $C^\infty_{\loc}(\mathbb{R}^{n+1} \times \mathbb{R})$ to a convex limit which contains a point of zero mean curvature. The strong maximum principle then implies that it is stationary, and thus consists of two parallel hyperplanes bounding a slab region. In particular, by monotonicity of the enclosed regions $\Omega_t$, $M_t$ lies in a fixed slab for all $t \leq 0$.
\end{proof}

\bibliographystyle{plain}
\bibliography{bibliography}

\end{document}